  \def\@seccntformat#1{%
    \@nameuse{@seccnt@prefix@#1}%
    \@nameuse{the#1}%
    \@nameuse{@seccnt@postfix@#1}%
    \@nameuse{@seccnt@afterskip@#1}}
  \def\@seccnt@prefix@section{}
  \def\@seccnt@postfix@section{.}
  \def\@seccnt@afterskip@section{\hspace{.5em}}
  \def\@seccnt@prefix@subsection{}
  \def\@seccnt@postfix@subsection{.}
  \def\@seccnt@afterskip@subsection{\hspace{.5em}}
\renewcommand\section{
  \@startsection{section}{3}{\z@}%
  {-3.25ex\@plus -1ex \@minus -.2ex}%
  {1.5ex \@plus .2ex}%
  {\normalfont\normalsize\bfseries\mathversion{bold}}}
\renewcommand\subsection{
  \@startsection{subsection}{3}{\z@}%
  {-3.25ex\@plus -1ex \@minus -.2ex}%
  {1.5ex \@plus .2ex}%
  {\normalfont\normalsize\bfseries\mathversion{bold}}}
\makeatletter \@addtoreset{equation}{section} \makeatother
\renewcommand{\theequation}{\arabic{section}.\arabic{equation}}
\let\oldthebibliography\thebibliography
\renewcommand\thebibliography[1]{
  \oldthebibliography{#1}\setlength{\itemsep}{0.4ex}}
\theoremstyle{plain}
\newtheorem{thm}{Theorem}[section]
\newtheorem{lem}[thm]{Lemma}
\newtheorem{prop}[thm]{Proposition}
\newtheorem{cor}[thm]{Corollary}
\theoremstyle{definition}
\newtheorem{algo}[thm]{Algorithm}
\newtheorem{dfn}[thm]{Definition}
\theoremstyle{remark}
\newtheorem{rem}[thm]{Remark}
\theoremstyle{plain}
\newtheorem*{thm*}{Theorem}
\newtheorem*{lem*}{Lemma}
\newtheorem*{prop*}{Proposition}
\newtheorem*{cor*}{Corollary}
\newtheorem*{conj*}{Conjecture}
\theoremstyle{definition}
\newtheorem*{algo*}{Algorithm}
\newtheorem*{ass*}{Assumption}
\newtheorem*{dfn*}{Definition}
\theoremstyle{remark}
\newtheorem*{rem*}{Remark}
\newcommand{\nn}{\nonumber}
\newcommand{\grp}[1]{\mathrm{#1}}
\newcommand{\bbC}{\mathbb{C}}
\newcommand{\bbZ}{\mathbb{Z}}
\newcommand{\bbH}{\mathbb{H}}
\newcommand{\varth}{\vartheta}
\newcommand{\cS}{\mathcal{S}}
\newcommand{\cT}{\mathcal{T}}
\newcommand{\ha}{\hat{a}}
\newcommand{\hb}{\hat{b}}
\newcommand{\hc}{\hat{c}}
\newcommand{\hd}{\hat{d}}
\newcommand{\halpha}{\hat{\alpha}}
\newcommand{\hpsi}{\psi_R}
\newcommand{\tf}{\tilde{f}}
\newcommand{\tu}{\tilde{u}}
\newcommand{\tI}{\tilde{I}}
\newcommand{\tpsi}{\tilde{\psi}}
\newcommand{\RS}{R}
\newcommand{\Gtri}{{\mathcal{G}}}
\newcommand{\Rtri}{R^\Gtri}
\newcommand{\Rwtri}{R^{\Gtri,\mathrm{w}}}
\newcommand{\RD}{R^{W(D_4)}}
\newcommand{\Sop}{\mathcal{I}}
\newcommand{\Rab}{\mathcal{R}}
\newcommand{\Rcd}{\tilde{\mathcal{R}}}
\newcommand{\vecz}{{\boldsymbol{z}}}
\newcommand{\mapsfrom}{\mathrel{\reflectbox{\ensuremath{\mapsto}}}}
\newcommand{\gen}[4]{{\phi^{(#1,#2)}_{#3,#4}}}
\newcommand{\tv}[3]{\bigl(#1,#2\bigr)_{#3}}
\begin{document}


\def\papertitlepage{\baselineskip 3.5ex \thispagestyle{empty}}
\def\preprinumber#1#2{\hfill
\begin{minipage}{1.22in}
#1 \par\noindent #2
\end{minipage}}

%
\papertitlepage
\setcounter{page}{0}
\preprinumber{}{}
\vskip 2ex
\vfill
\begin{center}
{\large\bf\mathversion{bold}
The ring of $D_4$ triality invariants
}
\end{center}
\vfill
\baselineskip=3.5ex
\begin{center}
Kazuhiro Sakai\\

{\small
\vskip 6ex
{\it Institute for Mathematical Informatics, Meiji Gakuin University\\
1518 Kamikurata-cho, Totsuka-ku,
Yokohama 244-8539, Japan}\\
\vskip 1ex
{\tt kzhrsakai@gmail.com}

}
\end{center}
\vfill
\baselineskip=3.5ex
\begin{center} {\bf Abstract} \end{center}

$D_4$ triality invariants are modular forms
as well as polynomial invariants for a fiber product
of the modular group and the Weyl group of type $F_4$.
We show that the ring of $D_4$ triality invariants
satisfying a certain cusp condition
is isomorphic to the ring of joint covariants of
a binary cubic and a binary quadratic form.

\vfill
\vspace{5ex}

\noindent
2020 Mathematics Subject Classification: 11F11, 17B22, 13A50


\setcounter{page}{0}
\newpage
\renewcommand{\thefootnote}{\arabic{footnote}}
\setcounter{footnote}{0}
\setcounter{section}{0}
\baselineskip = 3.5ex
\pagestyle{plain}

\section{Introduction}

Automorphic forms often exhibit an intimate
connection with classical invariant theory.
Let $V_n$ denote the $\bbC$-vector space of binary forms of degree $n$,
on which $\grp{SL}_2(\bbC)$ acts linearly.
The ring of modular forms (without characters)
for the even unimodular lattice of signature $(2,18)$
is isomorphic to the invariant ring
$\bbC[V_{12}\oplus V_8]^{\grp{SL}_2(\bbC)}$
\cite{Odaka:2018,Nagano:2021}.
The ring of Weyl invariant $E_8$
weak Jacobi forms is isomorphic to the covariant ring
$\bbC[V_6\oplus V_4\oplus\bbC^2]^{\grp{SL}_2(\bbC)}$
\cite{Sakai:2024vby}.
In this paper we show a similar relation
in a more elementary context:
\begin{thm}[Theorem~\ref{thm:RtriandCov}]\label{thm:main}
The ring of $D_4$ triality invariants is isomorphic to
the covariant ring
$\bbC[V_3\oplus V_2\oplus\bbC^2]^{\grp{SL}_2(\bbC)}$.
\end{thm}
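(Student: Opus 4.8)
The plan is to produce an explicit graded $\bbC$-algebra homomorphism
$\Phi\colon\bbC[V_3\oplus V_2\oplus\bbC^2]^{\grp{SL}_2(\bbC)}\to\Rtri$
from the covariant ring to the ring $\Rtri$ of $D_4$ triality invariants, to show that $\Phi$ is surjective, and then to obtain injectivity for free from a dimension count: both rings are finitely generated integral domains over $\bbC$, and a surjection between such rings whose Krull dimensions agree has zero kernel (the kernel is prime, and a nonzero prime in a finitely generated domain over a field strictly lowers the dimension of the quotient). A comparison of the Hilbert series of the two graded rings can serve as an independent cross-check.

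To construct $\Phi$ I would lean on the explicit description of $\Rtri$ established in the earlier sections, where the relevant modular objects are assembled from the three $D_4$ characters -- the vector and the two half-spinor representations -- on which triality acts as the symmetric group $S_3$. After a suitable normalisation these three characters become theta blocks $t_1,t_2,t_3$ that are the roots of a monic binary cubic $F(X,Y)=\prod_i(X-t_iY)$ whose coefficients lie in $\Rtri$; there is likewise a distinguished binary quadratic $G$ attached to the index-$2$ data and a pair $Z\in\bbC^2$ attached to the elliptic variable. Sending the generic binary cubic, the generic quadratic and the generic vector to $F$, $G$ and $Z$ respectively, and then reading ``evaluate a joint covariant on $(F,G,Z)$'', defines $\Phi$. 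The substantive point is to verify that $\Phi$ is well defined and grading-preserving: the combined action of the modular group and of $W(F_4)$ on the theta blocks must be exactly an $\grp{SL}_2(\bbC)$ transformation of the triple $(t_i)$ and of $(G,Z)$ -- with triality realised as the copy of $S_3$ inside $\grp{SL}_2(\bbC)$ that permutes the roots of $F$ -- so that $\grp{SL}_2(\bbC)$-covariance of a covariant turns into precisely the transformation law defining triality invariance.

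For surjectivity I would invoke the classical determination of the joint covariants of a binary cubic $f$ and a binary quadratic $g$: the cubic, its Hessian, its Jacobian and its discriminant; the quadratic and its discriminant; and the transvectants of $f$ and $g$ together with their iterated transvectants with $f$, $g$ and the Hessian. Using theta-function identities I would then check that each of the finitely many generators of $\Rtri$ exhibited earlier equals $\Phi$ applied to an explicit member of this list; since those elements generate $\Rtri$, the map $\Phi$ is onto. This is also where the cusp condition that appears in the precise statement (Theorem~\ref{thm:RtriandCov}) plays its role: the triality invariants satisfying it are exactly those in the image of $\Phi$.

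The dimension count itself is routine. On the invariant-theory side the Krull dimension of the covariant ring equals the transcendence degree of the field of rational joint covariants of a cubic and a quadratic, that is $\dim(V_3\oplus V_2\oplus\bbC^2)-\dim\grp{SL}_2(\bbC)=9-3=6$, because the generic $\grp{SL}_2(\bbC)$-orbit in $V_3\oplus V_2\oplus\bbC^2$ has dimension $3$ (a generic triple has trivial stabiliser). On the modular side one counts one modular variable, the four elliptic variables of $D_4$ (equivalently four algebraically independent $W(F_4)$-invariants of them), and one further parameter coming from the weight grading -- again $6$. Hence $\ker\Phi=0$ and $\Phi$ is an isomorphism. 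The genuine obstacle I anticipate lies in the second and third steps: making the dictionary between the $D_4$ theta transformation laws and the $\grp{SL}_2(\bbC)$-action precise enough that the fiber product of the modular group and $W(F_4)$ really does act through binary-form geometry, and then establishing the theta identities that express each generator of $\Rtri$ as a polynomial in the $\Phi$-images of the classical cubic-and-quadratic covariants.
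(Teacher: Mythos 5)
Your overall strategy (explicit map, surjectivity, injectivity by dimension count) is reasonable in outline, and the injectivity step is essentially sound: once surjectivity is known, $\Rtri_{*,*}$ becomes a finitely generated domain, it contains the six algebraically independent elements $E_4,E_6,\Delta K,\Delta^2L,\Delta^2M,\Delta^3N$, so its Krull dimension is $6$ and a nonzero prime kernel is impossible. The genuine gap is in the surjectivity step, and it is a circularity rather than a technicality: you propose to verify that "each of the finitely many generators of $\Rtri$ exhibited earlier" lies in the image of $\Phi$, but no finite generating set of $\Rtri_{*,*}$ is available before the theorem is proved. In the paper the $15$ generators are *obtained* by pushing Gordan's classical basis of $\bbC[V_2\oplus V_3\oplus\bbC^2]^{\grp{SL}_2(\bbC)}$ through the isomorphism; there is no independent proof that any explicit list generates $\Rtri_{*,*}$, and indeed the earlier sections only establish containments ($\Rtri_{*,*}\subset M_*[\Delta^{-1}][K,L,M,N]$ and a rank formula for the free $M_*$-modules $\Rtri_{*,m}$), never generation. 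The hard content of the paper is precisely the missing surjectivity: via the injection $\Sop$ and an inductive divisibility-by-$\Delta$ argument on leading $q$-coefficients, one shows every triality invariant is simultaneously a polynomial in the Seiberg--Witten coefficients $a_i,b_j$ and in the translated coefficients $c_k,d_l$, i.e.\ $\Rtri_{*,*}=\Rab\cap\Rcd$; this intersection is then matched with $\grp{U}_2(\bbC)$-semiinvariance (translation invariance in $u$) and Roberts' isomorphism converts semiinvariants to covariants. Your proposal has no substitute for this step, and your one-sentence gesture at the cusp condition does not supply one.

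A secondary concern is the well-definedness of $\Phi$ itself. You want triality to act as the $S_3$ permuting the roots $t_1,t_2,t_3$ of a theta-built cubic, with the whole $\Gtri$-action realised inside $\grp{SL}_2(\bbC)$ acting on $(F,G,Z)$. That dictionary is not what makes the paper's isomorphism work: there the binary quadratic and cubic are the coefficients $\bigl(a_0u^2+a_2,\ b_0u^3+b_1u^2+b_2u+b_3\bigr)$ of the Weierstrass form in the moduli parameter $u$, the relevant $\grp{SL}_2(\bbC)$-direction is the unipotent translation of $u$ (encoded by passing between the $(a,b)$ and $(c,d)$ normalisations), and the modular/triality action is absorbed into the statement that $a_i,b_j$ are themselves meromorphic triality invariants --- no identification of $S_3$ with root permutations inside $\grp{SL}_2(\bbC)$ is needed or used. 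Your proposed dictionary would have to be constructed and verified from scratch, and you correctly flag it as the obstacle; as written it is an unsupported claim on which both well-definedness and the grading comparison depend.
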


We introduce the notion of $D_4$ triality invariants as follows.
For any classical root system $\RS$, 
the associated Weyl group $W(\RS)$ acts
on the root lattice $\Lambda_\RS$
as an automorphism group.
In some cases the full automorphism group of $\Lambda_\RS$ is
larger than $W(\RS)$.
In particular, the full automorphism group of $\Lambda_{D_4}$
is $W(F_4)$.
The quotient $W(F_4)/W(D_4)$ is isomorphic to
the symmetric group $S_3$ and is called the triality,
which permutes the three legs of the $D_4$ Dynkin diagram.
Let $\phi_1,\phi_2$ be group homomorphisms
given by the quotient maps
\begin{align}
\begin{aligned}
\phi_1&:\grp{SL}_2(\bbZ)\to\grp{SL}_2(\bbZ)/\Gamma(2)\cong S_3,\\
\phi_2&:W(F_4)\to W(F_4)/W(D_4)\cong S_3.
\end{aligned}
\end{align}
Identifying the two $S_3$'s we can define the fiber product
\begin{align}
\Gtri:=\grp{SL}_2(\bbZ)\times_{S_3}W(F_4)
 =\{(g,w)\in\grp{SL}_2(\bbZ)\times W(F_4)\,|\,\phi_1(g)=\phi_2(w)\}.
\end{align}
A $D_4$ triality invariant is then defined as
a holomorphic function $\varphi:\bbH\times\bbC^4\to\bbC$
that is a modular form as well as a polynomial invariant
for $\Gtri$
and satisfies the cusp condition
$\lim_{q\to 0}|\varphi(\tau,z_1,z_2,z_3,q^{-1/2}\Lambda)|<\infty$
with $q=e^{2\pi i\tau},\ 0<|\Lambda|<\infty$.
$D_4$ triality invariants form a bigraded algebra $\Rtri_{*,*}$
over the ring of modular forms, graded by modular weight and
polynomial degree with respect to $z_i$.

The group $\Gtri$ arises naturally as a symmetry of
the $\mathcal{N}=2$ supersymmetric $\grp{SU}(2)$ gauge theory
with four flavors.
This theory, often called the $N_\mathrm{f}=4$ theory, is one of
the most well-known gauge theory models in mathematical physics
\cite{Seiberg:1994aj,Alday:2009aq}.
Remarkably, the spectrum of this theory is invariant under 
the permutations of electrons, monopoles and dyons
which induce simultaneous actions of
the $\grp{SL}_2(\bbZ)/\Gamma(2)$ electric-magnetic duality and
the $S_3$ outer automorphisms of the $\grp{Spin}(8)$ flavor symmetry.

The cusp condition also has a natural origin in physics
\cite{Seiberg:1994aj}:
it is the condition for a physical quantity
to stay finite under the renormalization group flow
from the $N_\mathrm{f}=4$ theory to
its descendant theories with less flavors.
Thus, $D_4$ triality invariants
serve as fundamental building blocks
in describing physical quantities of the theory
and it is an important problem to clarify their ring structure.

In this paper we shall prove Theorem~\ref{thm:main}
in the following way.
We first construct an injection $\Sop$, which
simplifies checking the cusp condition:
a $\Gtri$-invariant $\varphi$ satisfies the condition
if and only if $\Sop\varphi$ is a formal power series in $q$.
We next show 
that the bigraded ring $\Rtri_{*,*}$ of $D_4$ triality invariants
is contained in a certain polynomial ring
(Lemma~\ref{lem:RtriDinv}). Then, the rest of the proof
proceeds in the same way as
in the case of  Weyl invariant $E_8$ Jacobi forms \cite{Sakai:2024vby}.
We make use of
the Seiberg--Witten curve for the $N_\mathrm{f}=4$ theory
\cite{Seiberg:1994aj}
\begin{align}
y^2=4x^3
 -\sum_{i=0}^2 a_i(\tau,\vecz)u^{2-i}x
 -\sum_{j=0}^3 b_j(\tau,\vecz)u^{3-j}.
\label{eq:SWcurveab0}
\end{align}
Here, $x,y\in\bbC$ are coordinates, $u\in\bbC$ is a parameter
(the moduli space coordinate)
and $a_i,b_j$ are (meromorphic) $D_4$ triality invariants
whose explicit forms are known.
The curve was originally introduced to describe
the moduli space of vacua of the gauge theory.
In this paper we merely use the properties of
functions $a_i,b_j$ to show that
the ring $\Rtri_{*,*}$ of $D_4$ triality invariants
is given by the intersection $\Rab\cap\Rcd$
of two polynomial rings
$\Rab=\bbC[a_i,b_j]$ and $\Rcd=\bbC[c_k,d_l]$
(Theorem~\ref{thm:RcaptR}).
Here $c_k$ and $d_l$ are functions related to
$a_i$ and $b_j$ in a simple manner.
We then construct an explicit isomorphism
between $\Rtri_{*,*}=\Rab\cap\Rcd$
and the ring of joint semiinvariants of
a binary quadratic and a binary cubic (Theorem~\ref{thm:RtriandS}).
Since the ring of semiinvariants
and the ring of covariants are isomorphic
(the Roberts isomorphism \cite{Roberts:1861}),
this completes the proof of Theorem~\ref{thm:main}.

A minimal basis of generators for the covariant ring
$\bbC[V_2\oplus V_3\oplus\bbC^2]^{\grp{SL}_2(\bbC)}$
has been known since the 19th century \cite{Gordan:1887}.
Applying our isomorphism to these generators,
we explicitly obtain the generators
for the ring $\Rtri_{*,*}$ of $D_4$ triality invariants.
Moreover, as an application of Theorem~\ref{thm:RcaptR}
we formulate an efficient algorithm for constructing
all $D_4$ triality invariants of given weight and degree.
Having such an algorithm is important not only practically
but also theoretically because the exhaustive search of invariants
is likely to bring us new insights
into the study of the $N_\mathrm{f}=4$ theory.

There are several possible directions for further study.
Apart from the physical motivation,
we can relax the cusp condition and consider a wider class
of functions that are only required to be
a formal power series in $q^{1/2}$. We call these functions
weak $D_4$ triality invariants.
They naturally arise as coefficients
of the Taylor expansion of $D_4$ triality invariant Jacobi forms
\cite{Sakai:2023fnp}.
The weak $D_4$ triality invariants
also form a bigraded algebra over the ring of modular forms,
which is again not a polynomial algebra.
It is an interesting problem to determine the structure
of this algebra.
Other potentially related functions include
bimodular forms discussed recently
in the same context \cite{Aspman:2021evt}.

A more challenging but important problem
is the higher rank generalization.
The correspondence we study in this paper arises
from $\mathcal{N}=2$ supersymmetric rank-one gauge theory,
which in general carries $\grp{SL}_2(\bbZ)$ (or its subgroup)
as the group of duality transformations.
For rank $r$ gauge theory the duality group
is promoted to (a subgroup of) $\grp{Sp}_{2r}(\bbZ)$.
It will be very interesting if we can establish
a similar correspondence that involves
Siegel modular forms and
invariants of higher genus Seiberg--Witten curves.

At the beginning of this section we have seen three examples
$(n=1,2,4)$ of the correspondence between
the ring of automorphic forms and the invariant ring
$\bbC[V_{3n}\oplus V_{2n}]^{\grp{SL}_2(\bbC)}$
or the covariant ring
$\bbC[V_{3n}\oplus V_{2n}\oplus\bbC^2]^{\grp{SL}_2(\bbC)}$.
As automorphic forms, these examples are rather different 
(yet not totally unrelated \cite{Wang:2018fil,Sakai:2023fnp})
and the author does not know
whether there are more examples, in particular with other $n$.
What is common to these examples is that
each has a mirror symmetry behind it.
It will be intriguing if a unified perspective
becomes available for this kind of correspondences.

The structure of the paper is as follows.
In section~\ref{sec:Rtri}, we introduce $D_4$ triality invariants
and study basic structures of their modules and ring.
In section~\ref{sec:curve}, we recall the Seiberg--Witten curve
for the $N_\mathrm{f}=4$ theory
and show that the ring $\Rtri_{*,*}$ of $D_4$ triality invariants
is expressed as the intersection of two polynomial rings
generated by the curve coefficients.
In section~\ref{sec:iso}, we 
establish the explicit isomorphism between the ring $\Rtri_{*,*}$
and the ring of joint semiinvariants/covariants
of a binary cubic and a binary quadratic.
In section~\ref{sec:gen}, we study a minimal basis of $\Rtri_{*,*}$.
In Appendix~\ref{app:funct} we summarize
our convention of special functions.

\section{Basic properties of $D_4$ triality invariants}\label{sec:Rtri}

\subsection{Definitions}

Let $\bbH=\{\tau\in\bbC\,|\,\mathrm{Im}\,\tau>0\}$
be the upper half plane and set $q=e^{2\pi i\tau}$.
Let $M_k$ denote the space of modular forms of weight $k$ on
the full modular group $\grp{SL}_2(\bbZ)$.
In this paper the Eisenstein series $E_k(\tau)$ of weight $k$
is normalized as $E_k=1+O(q)$.
Modular forms on $\grp{SL}_2(\bbZ)$ form 
a graded ring $M_*=\bigoplus_{k=0}^\infty M_k$. We have
\begin{align}
M_*=\bbC[E_4,E_6].
\end{align}
The modular discriminant is written as
\begin{align}
\Delta=\eta^{24}=\frac{1}{1728}\left(E_4^3-E_6^2\right),
\label{eq:Delta}
\end{align}
where $\eta(\tau)$ is the Dedekind eta function.

Similarly, let $M_k(\Gamma(2))$ denote the space of
modular forms of weight $k$ on
the principal congruence subgroup $\Gamma(2)$.
The ring $M_*(\Gamma(2))=\bigoplus_{k=0}^\infty M_k(\Gamma(2))$
is freely generated by
$\varth_3^4=\varth_3(0,\tau)^4$ and $\varth_4^4=\varth_4(0,\tau)^4$
over $\bbC$:
\begin{align}
M_*(\Gamma(2))=\bbC[\varth_3^4,\varth_4^4].
\end{align}
Here $\varth_k(z,\tau)$ are Jacobi theta functions 
(see Appendix~\ref{app:funct}).

As we saw in the last section,
let $\phi_1,\phi_2$ be group homomorphisms
given by the quotient maps
\begin{align}
\begin{aligned}
\phi_1&:\grp{SL}_2(\bbZ)\to\grp{SL}_2(\bbZ)/\Gamma(2)\cong S_3,\\
\phi_2&:W(F_4)\to W(F_4)/W(D_4)\cong S_3.
\end{aligned}
\end{align}
Identifying the two $S_3$'s we can define the fiber product
\begin{align}
\Gtri:=\grp{SL}_2(\bbZ)\times_{S_3}W(F_4)
 =\{(g,w)\in\grp{SL}_2(\bbZ)\times W(F_4)\,|\,\phi_1(g)=\phi_2(w)\}.
\label{eq:Gtri}
\end{align}
It is clear that $\Gtri$ satisfies
\begin{align}
\Gamma(2)\times W(D_4)
\ \subset\  \Gtri
\ \subset\
\grp{SL}_2(\bbZ)\times W(F_4).
\label{eq:subgrp}
\end{align}
To be specific, we fix the action of $\Gtri$ on $\bbH\times\bbC^4$
so that the $S_3$ is generated by
\begin{align}
\label{eq:Taction}
&\cT:\ \tau\mapsto\tau+1,\qquad
\left(\begin{array}{c}z_1\\ z_2\\ z_3\\ z_4\end{array}\right)
\ \mapsto\ 
\left(\begin{array}{cccc}
1&0&0&0\\
0&1&0&0\\
0&0&1&0\\
0&0&0&-1\end{array}\right)
\left(\begin{array}{c}z_1\\ z_2\\ z_3\\ z_4\end{array}\right),\\
\label{eq:Saction}
&\cS:\ \tau\mapsto -\frac{1}{\tau},
\qquad
\left(\begin{array}{c}z_1\\ z_2\\ z_3\\ z_4\end{array}\right)
\ \mapsto\ 
\frac{1}{2}
\left(\begin{array}{cccc}
1&1&1&-1\\
1&1&-1&1\\
1&-1&1&1\\
-1&1&1&1\end{array}\right)
\left(\begin{array}{c}z_1\\ z_2\\ z_3\\ z_4\end{array}\right).
\end{align}
\begin{dfn}\label{def:triinv}
A $D_4$ triality invariant of weight $k$ and degree $m$
($k\in\bbZ,\ m\in \bbZ_{\ge 0}$) is a holomorphic function
$\varphi_{k,m}:\bbH\times\bbC^4\to \bbC$
that possesses the following properties:
\renewcommand{\theenumi}{\roman{enumi}}
\renewcommand{\labelenumi}{(\theenumi)}
\begin{enumerate}

\item $\varphi_{k,m}(\tau,z_1,z_2,z_3,z_4)$ is
a homogeneous polynomial of degree $m$ in $z_1,z_2,z_3,z_4$.

\item $\varphi_{k,m}$ transforms under the action of
$\left(\left(\begin{smallmatrix}a&b\\ c&d\end{smallmatrix}\right),
 w\right)\in\Gtri$ as
\begin{align}
&
\varphi_{k,m}\left(
\frac{a\tau+b}{c\tau+d}\,,w(\vecz)\right)
=(c\tau+d)^k\varphi_{k,m}(\tau,\vecz).
\end{align}

\item $\varphi_{k,m}$ satisfies the cusp condition
\begin{align}
\lim_{q\to 0}|\varphi_{k,m}(\tau,z_1,z_2,z_3,q^{-1/2}\Lambda)|<\infty,
\qquad q=e^{2\pi i\tau},
\label{eq:cuspcond}
\end{align}
where $\Lambda\in\bbC^*$ is a constant.
\end{enumerate}
Similarly, a weak $D_4$ triality invariant is
a function defined in the same way as above,
but with the following condition instead of (iii):
\renewcommand{\theenumi}{\roman{enumi}}
\renewcommand{\labelenumi}{(\theenumi)$'$}
\begin{enumerate}
\setcounter{enumi}{2}
\item $\varphi_{k,m}$ satisfies the cusp condition
\begin{align}
\lim_{q\to 0}|\varphi_{k,m}(\tau,\vecz)|<\infty.
\end{align}
\end{enumerate}
\end{dfn}
\begin{rem}
The cusp condition (iii) is equivalent to the requirement that
$\varphi_{k,m}$ has a Fourier expansion of the form
\begin{align}
\varphi_{k,m}(\tau,\vecz)
 =\sum_{n=0}^\infty v_n(\vecz)q^{n/2},\qquad
\mathrm{deg}_{z_4}v_n\le n.
\end{align}
Note that half integral powers of $q$ can arise since
the change of sign $\cT(q^{l+1/2})=-q^{l+1/2}$
with $l\in\bbZ_{\ge 0}$
can be compensated by $\cT(v_{2l+1})=-v_{2l+1}$.
Note also that the condition is automatically satisfied for
$v_n$ with $n\ge m$, as all nonzero $v_n(\vecz)$ are
homogeneous polynomials of degree $m$ in $z_i$.
Similarly, the cusp condition (iii)$'$ is equivalent to the requirement
that $\varphi_{k,m}$ has a Fourier expansion of the form
\begin{align}
\varphi_{k,m}(\tau,\vecz)
 =\sum_{n=0}^\infty v_n(\vecz)q^{n/2}
\end{align}
without the restriction on $\mathrm{deg}_{z_4}v_n(\vecz)$.
Clearly, a $D_4$ triality invariant is
a weak $D_4$ triality invariant.
\end{rem}
Let $\Rtri_{k,m}$ denote the $\bbC$-vector space of
$D_4$ triality invariants of weight $k$ and degree $m$.
We also introduce the notation
\begin{align}
\Rtri_{*,m}=\bigoplus_{k\in\bbZ}\Rtri_{k,m}.
\end{align}
It is clear from Definition~\ref{def:triinv}
that a $D_4$ triality invariant of degree $0$
is a modular form for $\grp{SL}_2(\bbZ)$, and vice versa.
In other words, we have
\begin{align}
\Rtri_{*,0}=M_*.
\end{align}
This allows us to view $\Rtri_{*,m}$ as a module over $M_*$.
It is also clear from Definition~\ref{def:triinv} that
$D_4$ triality invariants form a bigraded ring
\begin{align}
\Rtri_{*,*}=\bigoplus_{m=0}^\infty\Rtri_{*,m}
\end{align}
over $M_*$.

Similarly, we let $\Rwtri_{k,m}$ denote the $\bbC$-vector space of
weak $D_4$ triality invariants of weight $k$ and degree $m$.
We introduce the notations
\begin{align}
\Rwtri_{*,*}=\bigoplus_{m=0}^\infty\Rwtri_{*,m},\qquad
\Rwtri_{*,m}=\bigoplus_{k\in\bbZ}\Rwtri_{k,m},
\end{align}
which are viewed as
a bigraded ring and modules over $M_*$, respectively.
We have
\begin{align}
\Rtri_{k,m}\subset\Rwtri_{k,m},\qquad
\Rtri_{*,m}\subset\Rwtri_{*,m},\qquad
\Rtri_{*,*}\subset\Rwtri_{*,*}.
\end{align}
%

\subsection{Injection and cusp condition}

In this subsection we introduce the injection $\Sop$
which simplifies checking the cusp condition
of $D_4$ triality invariants.

Let $\RD_m$ be the $\bbC$-vector space of
polynomial invariants of $W(D_4)$ of degree $m$.
By the Chevalley--Shephard--Todd theorem, the graded ring
\begin{align}
\RD_*=\bigoplus_{m=0}^\infty\RD_m
\end{align}
is a polynomial ring generated by
four polynomial invariants of degree $2,4,6,4$
\begin{align}
\RD_*=\bbC[I_2,I_4,I_6,\tI_4].
\end{align}
We take the basis elements as
\begin{align}
I_2&=\sum_{i=1}^4 z_i^2,\qquad
I_4&=\sum_{1\le i<j\le 4}z_i^2z_j^2,\qquad
I_6&=\sum_{1\le i<j<k\le 4}z_i^2z_j^2z_k^2,\qquad
\tI_4&=\prod_{i=1}^4 z_i.
\end{align}
These are algebraically independent over $\bbC$, as clearly
seen from the Jacobian determinant:
\begin{align}
\left|\frac{\partial(I_2,I_4,I_6,\tI_4)}{\partial(z_1,z_2,z_3,z_4)}
\right|
 =8\prod_{1\le i<j\le 4}(z_i^2-z_j^2).
\label{eq:JacobiI}
\end{align}

The following proposition is also clear from 
the relation \eqref{eq:subgrp}:
\begin{prop}\label{prop:G2WD4ring}
\begin{align}
\Rtri_{*,*}\subset
\Rwtri_{*,*}\subset
M_*(\Gamma(2))\otimes_\bbC\RD_*
=\bbC[\varth_3^4,\varth_4^4,I_2,I_4,I_6,\tI_4].
\end{align}
\end{prop}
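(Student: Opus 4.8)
The plan is to separate the dependence on $\vecz$ from the dependence on $\tau$, exploiting the normality of $W(D_4)$ in $W(F_4)$, and then to recognise the $\tau$-coefficients as modular forms for $\Gamma(2)$. The inclusion $\Rtri_{*,*}\subset\Rwtri_{*,*}$ has already been observed, so the content is $\Rwtri_{*,*}\subset M_*(\Gamma(2))\otimes_\bbC\RD_*$. Fix $\varphi=\varphi_{k,m}\in\Rwtri_{k,m}$. Since $\{1\}\times W(D_4)\subset\Gamma(2)\times W(D_4)\subset\Gtri$, for each fixed $\tau\in\bbH$ the polynomial $\varphi(\tau,\cdot)$ is $W(D_4)$-invariant and homogeneous of degree $m$, hence lies in $\RD_m$. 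Expanding in the monomial basis $\{M_\alpha\}$ of $\RD_m$ formed by the degree-$m$ monomials in $I_2,I_4,I_6,\tI_4$, I obtain $\varphi(\tau,\vecz)=\sum_\alpha f_\alpha(\tau)\,M_\alpha(\vecz)$ with uniquely determined functions $f_\alpha\colon\bbH\to\bbC$; each $f_\alpha$ is holomorphic because the $M_\alpha$ are linearly independent (a consequence of the algebraic independence recorded in \eqref{eq:JacobiI}), so $f_\alpha$ is a fixed finite $\bbC$-linear combination of the holomorphic functions $\tau\mapsto\varphi(\tau,\vecz^{(j)})$ for suitably chosen sample points $\vecz^{(j)}$.

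Next I would read off the transformation law of the $f_\alpha$. For $g=\left(\begin{smallmatrix}a&b\\c&d\end{smallmatrix}\right)\in\grp{SL}_2(\bbZ)$ the surjectivity of the quotient maps $\phi_1,\phi_2$ lets me choose $w_g\in W(F_4)$ with $\phi_2(w_g)=\phi_1(g)$, so that $(g,w_g)\in\Gtri$. Because $W(D_4)$ is normal in $W(F_4)$, the graded ring $\RD_*$ is $W(F_4)$-stable, with $W(D_4)$ acting trivially; hence the substitution $\vecz\mapsto w_g(\vecz)$ induces an invertible linear self-map of the finite-dimensional space $\RD_m$ that depends only on the class $\phi_1(g)\in S_3$. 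Applying the invariance identity $\varphi\bigl(\tfrac{a\tau+b}{c\tau+d},w_g(\vecz)\bigr)=(c\tau+d)^k\varphi(\tau,\vecz)$, expanding both sides in the $M_\alpha$-basis and comparing coefficients, I obtain $f_\beta\bigl(\tfrac{a\tau+b}{c\tau+d}\bigr)=(c\tau+d)^k\sum_\alpha C^{(g)}_{\beta\alpha}f_\alpha(\tau)$ for an invertible matrix $C^{(g)}$ depending only on $\phi_1(g)$. Taking $g\in\Gamma(2)$, where $C^{(g)}$ is the identity, shows that each $f_\alpha$ is a weight-$k$ modular form for $\Gamma(2)$, holomorphic on $\bbH$.

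It remains to control the $f_\alpha$ at the cusps, which is the only step genuinely using the full group $\Gtri$ rather than just $\Gamma(2)\times W(D_4)$. The weak cusp condition (iii)$'$ gives $\varphi(\tau,\vecz)=\sum_{n\ge 0}v_n(\vecz)q^{n/2}$; extracting $M_\alpha$-coefficients yields $f_\alpha(\tau)=\sum_{n\ge 0}a_{\alpha,n}q^{n/2}$, so each $f_\alpha$ is holomorphic at the cusp $\infty$. For an arbitrary $g=\left(\begin{smallmatrix}a&b\\c&d\end{smallmatrix}\right)\in\grp{SL}_2(\bbZ)$ the identity of the previous paragraph rewrites as $(c\tau+d)^{-k}f_\beta\bigl(\tfrac{a\tau+b}{c\tau+d}\bigr)=\sum_\alpha C^{(g)}_{\beta\alpha}f_\alpha(\tau)$, whose right-hand side is a power series in $q^{1/2}$ with no negative powers; this says precisely that $f_\beta$ is holomorphic at the cusp $\tfrac{a}{c}$. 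Since $\grp{SL}_2(\bbZ)$ acts transitively on the three cusps of $\Gamma(2)$ (the points $\infty,0,1$), each $f_\alpha$ is holomorphic at every cusp, hence $f_\alpha\in M_k(\Gamma(2))=\bbC[\varth_3^4,\varth_4^4]$. Therefore $\varphi=\sum_\alpha f_\alpha M_\alpha$ lies in $M_*(\Gamma(2))\otimes_\bbC\RD_*$, and this tensor product equals $\bbC[\varth_3^4,\varth_4^4,I_2,I_4,I_6,\tI_4]$ because the six generators are algebraically independent over $\bbC$: $\varth_3^4,\varth_4^4$ among themselves, $I_2,I_4,I_6,\tI_4$ among themselves by \eqref{eq:JacobiI}, and the two families because they involve the disjoint variable sets $\{\tau\}$ and $\{z_1,z_2,z_3,z_4\}$.

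The argument is routine bookkeeping except for the last paragraph. Showing that the $f_\alpha$ are \emph{holomorphic} at the cusps $0$ and $1$ — not merely finite after dividing by the automorphy factor, which would already admit $\lambda$-type weakly holomorphic forms into the ring — is exactly where the non-$\Gamma(2)$ elements of $\grp{SL}_2(\bbZ)$, paired with the triality elements of $W(F_4)$, do the real work. This is what the phrase ``clear from \eqref{eq:subgrp}'' is compressing, and it is the step I expect to be the main obstacle.
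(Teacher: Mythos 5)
Your proof is correct, and it is exactly the argument the paper compresses into ``clear from \eqref{eq:subgrp}'': decompose $\varphi$ over a monomial basis of $\RD_m$ using invariance under $\{1\}\times W(D_4)$, deduce the $\Gamma(2)$-modularity of the coefficient functions from invariance under $\Gamma(2)\times W(D_4)$, and control the cusps $0$ and $1$ by transporting them to $\infty$ with elements $(g,w_g)\in\Gtri$, where normality of $W(D_4)$ in $W(F_4)$ makes the $\vecz$-substitution an invertible linear map on $\RD_m$. Since the paper offers no written proof, your fleshed-out version (including the cusp-holomorphy step you rightly flag as the only nontrivial point) is the intended argument, and I see no gap.
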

From this we immediately see that
the weight and index of every nonzero (weak) $D_4$ triality invariant
are both non-negative even numbers. In other words,
\begin{align}
\Rtri_{k,m}=\Rwtri_{k,m}=\{0\}\quad\mbox{unless}\quad
k,m\in 2\bbZ_{\ge 0}.
\end{align}
Before introducing the injection $\Sop$,
let us prove the following lemma:
\begin{lem}\label{lem:qexp}
Every weak $D_4$ triality invariant $\varphi\in\Rwtri_{*,*}$
can be expanded as
\begin{align}
\varphi=
 \sum_{n=0}^\infty v_{1,n}(\vecz)q^n
+\tI_4\sum_{n=0}^\infty v_{2,n}(\vecz)q^{n+1/2},
\end{align}
where
\begin{align}
v_{a,n}\in\bbC[I_2,I_4,I_6,\tI_4^2].
\end{align}
\end{lem}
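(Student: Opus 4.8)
The plan is to combine the $q$-expansion from the cusp condition (the Remark following Definition~\ref{def:triinv}) with the explicit ring containment of Proposition~\ref{prop:G2WD4ring}. First I would start from the fact that any $\varphi\in\Rwtri_{*,*}$ lies in $\bbC[\varth_3^4,\varth_4^4,I_2,I_4,I_6,\tI_4]$, so I can write $\varphi=P+\tI_4 Q$ where $P$ is a polynomial in $\varth_3^4,\varth_4^4,I_2,I_4,I_6,\tI_4^2$ and $Q$ is likewise a polynomial in $\varth_3^4,\varth_4^4,I_2,I_4,I_6,\tI_4^2$ (splitting by parity of the $\tI_4$-exponent; note $\tI_4^2$ is a polynomial in the $I$'s times nothing new, but keeping $\tI_4^2$ as a formal generator is cleanest). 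Equivalently, $\varphi = \sum_{\alpha} f_\alpha(\tau)\, m_\alpha(\vecz) + \tI_4\sum_\beta g_\beta(\tau)\, n_\beta(\vecz)$ where $m_\alpha,n_\beta$ are monomials in $I_2,I_4,I_6,\tI_4^2$ and $f_\alpha,g_\beta\in M_*(\Gamma(2))=\bbC[\varth_3^4,\varth_4^4]$.

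The key point is then to pin down which powers of $q^{1/2}$ the coefficients $f_\alpha$ and $g_\beta$ can contain. Here I would use the transformation under $\cT$: from \eqref{eq:Taction}, $\cT$ fixes $z_1,z_2,z_3$ and sends $z_4\mapsto -z_4$, hence fixes every $I_j$ and every $\tI_4^2$, but sends $\tI_4\mapsto-\tI_4$. Since $\varphi$ is $\cT$-invariant up to the weight factor (which is trivial for $\cT$ as $c=0,d=1$), matching the two sides forces the ``$P$-part'' $\sum f_\alpha m_\alpha$ to be $\cT$-invariant and the ``$\tI_4 Q$-part'' to be anti-invariant under $\cT$. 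Because the $m_\alpha$ are $\cT$-invariant and linearly independent as polynomials in $\vecz$, each $f_\alpha$ must satisfy $f_\alpha(\tau+1)=f_\alpha(\tau)$, i.e. $f_\alpha$ is a power series in $q$ (integral powers only); similarly each $g_\beta$ must satisfy $g_\beta(\tau+1)=-g_\beta(\tau)$, i.e. $g_\beta$ is $q^{1/2}$ times a power series in $q$. Combined with the weak cusp condition (iii)$'$, which guarantees $\varphi$ — and hence, by the linear independence of the $\vecz$-monomials, each $f_\alpha$ and each $g_\beta$ — is a formal power series in $q^{1/2}$ with no negative powers, this gives $f_\alpha=\sum_{n\ge 0} c_{\alpha,n}q^n$ and $g_\beta=\sum_{n\ge 0} d_{\beta,n}q^{n+1/2}$. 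Collecting the $\vecz$-dependence into $v_{1,n}(\vecz)=\sum_\alpha c_{\alpha,n}m_\alpha(\vecz)\in\bbC[I_2,I_4,I_6,\tI_4^2]$ and $v_{2,n}(\vecz)=\sum_\beta d_{\beta,n}n_\beta(\vecz)\in\bbC[I_2,I_4,I_6,\tI_4^2]$ yields exactly the claimed expansion.

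The only mildly delicate point — the main obstacle, such as it is — is justifying that one may separate the $\tau$-dependence from the $\vecz$-dependence and apply $\cT$-invariance coefficient-wise; this rests on the algebraic independence of $\varth_3^4,\varth_4^4,I_2,I_4,I_6,\tI_4$ over $\bbC$ (Proposition~\ref{prop:G2WD4ring} together with \eqref{eq:JacobiI} and the freeness of $M_*(\Gamma(2))$), which makes the representation $\varphi=\sum f_\alpha m_\alpha+\tI_4\sum g_\beta n_\beta$ unique and lets the $\cT$-action and the power-series condition be read off monomial by monomial. Everything else is bookkeeping.
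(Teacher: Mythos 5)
Your proposal is correct and follows essentially the same route as the paper: both use the containment $\Rwtri_{*,*}\subset\bbC[\varth_3^4,\varth_4^4,I_2,I_4,I_6,\tI_4]$ from Proposition~\ref{prop:G2WD4ring} and then the $\cT$-action, which simultaneously flips the sign of $\tI_4$ and of $q^{1/2}$, to pair the half-integral $q$-powers with odd powers of $\tI_4$. The only (cosmetic) difference is that you split by $\tI_4$-parity first and then read off the $q$-expansions of the modular coefficients, whereas the paper expands in $q^{1/2}$ first and applies $\cT$ to the coefficients; your invocation of the weak cusp condition is harmless but redundant, since membership in $\bbC[\varth_3^4,\varth_4^4]\otimes\RD_*$ already rules out negative powers.
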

\begin{proof}
By Proposition~\ref{prop:G2WD4ring}
and the fact that $\varth_3^4,\varth_4^4$ have
a Fourier expansion of the form $\sum_{n=0}^\infty\alpha_nq^{n/2}$,
the Fourier expansion of $\varphi$ can be expressed as
\begin{align}
\varphi= \sum_{n=0}^\infty v_n(\vecz)q^n
+\sum_{n=0}^\infty \tilde{v}_n(\vecz)q^{n+1/2},\qquad
v_n,\tilde{v}_n\in\bbC[I_2,I_4,I_6,\tI_4].
\end{align}
Since $\varphi$ is a weak $D_4$ triality invariant,
$\varphi$ is invariant under the action of $\cT$
given in \eqref{eq:Taction}.
This implies that $\cT(v_n)=v_n$,
$\cT(\tilde{v}_n)=-\tilde{v}_n$
and thus $v_n$ ($\tilde{v}_n$) are
even (odd) polynomials of $\tI_4$.
\end{proof}

Let us now define the mapping $\Sop$
which acts on weak $D_4$ triality invariants by
\begin{align}
\Sop(q,I_2,I_4,I_6,\tI_4)
=\left(q,
 \frac{I_2}{q},\frac{I_4}{q},\frac{I_6}{q},\frac{\tI_4}{q^{1/2}}\right).
\end{align}
Clearly, $\Sop$ is injective and gives an automorphism
of the ring of formal Laurent series of $q^{1/2}$ over $\RD_*$.
This injection $\Sop$ plays a crucial role in this paper,
as it gives us a transparent way to check
the cusp condition of $D_4$ triality invariants:
\begin{lem}\label{lem:mqexp}
Let $\varphi$ be a weak $D_4$ triality invariant.
Then the following two conditions are equivalent:
\renewcommand{\theenumi}{\roman{enumi}}
\renewcommand{\labelenumi}{$\mathrm{(\theenumi})$}
\begin{enumerate}
\item $\varphi$ is a $D_4$ triality invariant.

\item $\Sop\varphi$ is a formal power series in $q$,
i.e.~it is expanded as
\begin{align}
\Sop\varphi
 =\sum_{n=0}^\infty v_n(\vecz)q^n,\qquad v_n\in\RD_*.
\label{eq:subsqexp}
\end{align}
\end{enumerate}
\end{lem}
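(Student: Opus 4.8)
The plan is to reduce conditions (i) and (ii) to one and the same numerical bound on the Fourier coefficients of $\varphi$. Using Lemma~\ref{lem:qexp}, write
\begin{align}
\varphi=\sum_{n\ge 0}v_{1,n}(\vecz)\,q^n+\tI_4\sum_{n\ge 0}v_{2,n}(\vecz)\,q^{n+1/2},
\qquad v_{a,n}\in\bbC[I_2,I_4,I_6,\tI_4^2].
\end{align}
Since $I_2,I_4,I_6,\tI_4$ are algebraically independent, each $v_{a,n}$ has a unique expansion $v_{a,n}=\sum\lambda^{(a,n)}_{ijkl}\,I_2^iI_4^jI_6^k\tI_4^{2l}$, and I set $N_{a,n}:=\max\{i+j+k+l:\lambda^{(a,n)}_{ijkl}\ne 0\}$ (with $N_{a,n}:=-\infty$ when $v_{a,n}=0$). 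I aim to show that both (i) and (ii) are equivalent to the bound $N_{a,n}\le n$ for all $a\in\{1,2\}$ and $n\ge 0$.

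For (ii): applying $\Sop$ --- which fixes $q$ and sends $I_{2r}\mapsto q^{-1}I_{2r}$, $\tI_4\mapsto q^{-1/2}\tI_4$ --- one gets
\begin{align}
\Sop\varphi=\sum_{n\ge 0}\bigl(\Sop v_{1,n}+\tI_4\,\Sop v_{2,n}\bigr)q^n,\qquad
\Sop v_{a,n}=\sum\lambda^{(a,n)}_{ijkl}\,I_2^iI_4^jI_6^k\tI_4^{2l}\,q^{-(i+j+k+l)},
\end{align}
so $\Sop v_{a,n}$ is a Laurent polynomial in $q$ over $\bbC[I_2,I_4,I_6,\tI_4^2]$ whose lowest term is $q^{-N_{a,n}}$ times a nonzero polynomial (nonzero because the monomials in $I_2,I_4,I_6,\tI_4^2$ are linearly independent). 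Using the decomposition $\RD_*=\bbC[I_2,I_4,I_6,\tI_4^2]\oplus\tI_4\,\bbC[I_2,I_4,I_6,\tI_4^2]$ into even and odd powers of $\tI_4$, the families $\Sop v_{1,n}$ and $\tI_4\,\Sop v_{2,n}$ cannot cancel one another, and within a single family the monomial $I_2^iI_4^jI_6^k\tI_4^{2l}$ can contribute to a given power $q^m$ only for $n=m+i+j+k+l$, so there is no cancellation across $n$ either. Hence $\Sop\varphi$ is a formal power series in $q$ if and only if $n\ge N_{a,n}$ for all $a,n$, i.e.\ (ii) holds iff the bound holds.

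For (i): by the Remark after Definition~\ref{def:triinv} the cusp condition is equivalent to $\deg_{z_4}w_m\le m$ for all $m$, where $\varphi=\sum_m w_m(\vecz)q^{m/2}$; from the expansion above $w_{2n}=v_{1,n}$ and $w_{2n+1}=\tI_4 v_{2,n}$, so the condition reads $\deg_{z_4}v_{1,n}\le 2n$ and $1+\deg_{z_4}v_{2,n}\le 2n+1$ for all $n$. It therefore suffices to prove $\deg_{z_4}v_{a,n}=2N_{a,n}$, i.e.\ that the leading $z_4$-term never cancels. With $e_1,e_2,e_3$ the elementary symmetric polynomials in $z_1^2,z_2^2,z_3^2$ one has $I_2=z_4^2+e_1$, $I_4=z_4^2e_1+e_2$, $I_6=z_4^2e_2+e_3$, $\tI_4^2=z_4^2e_3$, so the coefficient of $z_4^{2N}$ in a monomial $I_2^iI_4^jI_6^k\tI_4^{2l}$ with $i+j+k+l=N$ equals $e_1^je_2^ke_3^l$; since $e_1,e_2,e_3$ are algebraically independent and $i=N-j-k-l$ is then determined, these leading coefficients stay linearly independent, so the top $z_4$-degree part of $v_{a,n}$ is nonzero and $\deg_{z_4}v_{a,n}=2N_{a,n}$. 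Thus (i) is also equivalent to $N_{a,n}\le n$, and combining with the previous paragraph yields (i) $\Leftrightarrow$ (ii). The one genuinely delicate point --- the main obstacle --- is this no-cancellation claim, namely that the $z_4$-degree of an element of $\bbC[I_2,I_4,I_6,\tI_4^2]$ equals exactly twice its total degree in the generators; without it, conditions (i) and (ii) would each only be necessary for the bound $N_{a,n}\le n$, not equivalent to it, and the two would fail to match. Everything else is routine manipulation of formal Laurent series in $q^{1/2}$ over $\RD_*$.
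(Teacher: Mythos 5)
Your proof is correct, and while it rests on the same underlying mechanism as the paper's --- the operator $\Sop$ rescales a monomial of $\RD_*$ by $q^{-\delta/2}$ with $\delta$ its $z_4$-degree, so both the cusp condition and the regularity of $\Sop\varphi$ reduce to bounding $z_4$-degrees of Fourier coefficients --- it is organized quite differently. The paper argues the two implications separately: for (i)$\Rightarrow$(ii) it expands $\varphi=\sum_i f_i(\tau)u_i(\vecz)$ over monomials $u_i$ of $I_2,I_4,I_6,\tI_4$ and reads off $f_i=O(q^{\delta_i/2})$ from the cusp condition; for (ii)$\Rightarrow$(i) it substitutes $z_4=q^{-1/2}\Lambda$ into $\Sop^{-1}(v_n)q^n$ termwise and checks boundedness. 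You instead reduce both (i) and (ii) to one combinatorial condition $N_{a,n}\le n$ on the generator-degrees of the Fourier coefficients. What your route buys is that it makes explicit, and proves, the no-cancellation fact $\deg_{z_4}v=2N$ for $v\in\bbC[I_2,I_4,I_6,\tI_4^2]$ (via $I_2=z_4^2+e_1$, $I_4=z_4^2e_1+e_2$, $I_6=z_4^2e_2+e_3$, $\tI_4^2=z_4^2e_3$ and the algebraic independence of $e_1,e_2,e_3$); the paper's step ``$f_i=O(q^{\delta_i/2})$ due to property (iii)'' silently relies on exactly this fact, since a priori the top $z_4$-degree parts of distinct monomials $u_i$ could cancel against one another. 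Your appeal to the Remark after Definition~\ref{def:triinv} (cusp condition $\Leftrightarrow$ $\deg_{z_4}v_n\le n$) is legitimate since the paper asserts it there, though be aware that the Remark itself is stated without proof, so in a fully self-contained write-up you would want to include the short argument for it as well.
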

\begin{proof}
(i) $\Rightarrow$ (ii):
By Lemma~\ref{lem:qexp} we immediately see that
for any $\varphi\in\Rwtri_{*,*}$,
the image
$\Sop\varphi$ is written as a formal Laurent series
$\Sop\varphi=\sum_{n=n_0}^\infty \tilde{v}_n(\vecz)q^n$
with $\tilde{v}_n\in\RD_*$.
It therefore suffices to show that $n_0\ge 0$
if $\varphi$ is in $\Rtri_{*,*}$.
By Proposition~\ref{prop:G2WD4ring},
$\varphi$ can be written as a finite sum
\begin{align}
\varphi=\sum_i f_i(\tau)u_i(\vecz),
\end{align}
where $f_i\in\bbC[\varth_3^4,\varth_4^4]$
and $u_i$ are monomials of $I_2,I_4,I_6,\tI_4$.
Then,
for each $u_i$
we have $\Sop u_i=q^{-\delta_i/2}u_i$,
where $\delta_i:=\mathrm{deg}_{z_4}(u_i)$.
(This can be seen as follows: if we write $u_i$ as
$u_i=I_2^aI_4^bI_6^c\tI_4^d\ (a,b,c,d\in\bbZ_{\ge 0})$,
we have
\begin{align}
\begin{aligned}
\delta_i
 =\mathrm{deg}_{z_4}u_i
&=a\,\mathrm{deg}_{z_4}I_2
 +b\,\mathrm{deg}_{z_4}I_4
 +c\,\mathrm{deg}_{z_4}I_6
 +d\,\mathrm{deg}_{z_4}\tI_4\\
&=2a+2b+2c+d,
\end{aligned}
\end{align}
while $\Sop u_i = q^{-a-b-c-d/2}u_i$.)
On the other hand, if $\varphi$ is in $\Rtri_{*,*}$,
then each $f_i$ 
must have a Fourier expansion
$f_i=O(q^{\delta_i/2})$,
i.e.~it has no terms of the order of $q^\alpha$
with $\alpha<\delta_i/2$,
due to the property (iii) of Definition~\ref{def:triinv}.
Thus, $\Sop(\varphi)=\sum_i\Sop(f_i u_i)=O(q^0)$.

(ii) $\Rightarrow$ (i):
It suffices to show that $\varphi$ has the property (iii) of
Definition~\ref{def:triinv}.
Since $\Sop$ is injective, we have
\begin{align}
\varphi
 =\sum_{n=0}^\infty \Sop^{-1}(v_n)q^n.
\end{align}
Recall that $v_n$ are polynomials of $I_2,I_4,I_6,\tI_4$
and for every $X=I_2,I_4,I_6,\tI_4$ it is easy to see that
$|\Sop^{-1}(X)|$ with $z_4=q^{-1/2}\Lambda$
stays finite in the limit of $q\to 0$.
This implies that $\varphi$ possesses
the property (iii) of Definition~\ref{def:triinv}.
\end{proof}
%

\subsection{Free modules}

In this subsection we study the structure of
the module $\Rtri_{*,m}$ of $D_4$ triality invariants.

\begin{lem}
$\Rtri_{*,m}$ is a free module over $M_*$.
\end{lem}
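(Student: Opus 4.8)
The plan is to exhibit $\Rtri_{*,m}$ as a submodule of a finitely generated free module over the polynomial ring $M_*=\bbC[E_4,E_6]$ and then invoke the graded version of the structure theorem together with a dimension/Hilbert-series argument to upgrade ``torsion-free'' to ``free''. First I would note that $M_*$ is a polynomial ring in two variables, hence a graded Noetherian regular ring of global dimension $2$. By Proposition~\ref{prop:G2WD4ring} we have $\Rtri_{*,m}\subset\bbC[\varth_3^4,\varth_4^4]\otimes_\bbC\RD_m$, and since $\RD_m$ is a finite-dimensional $\bbC$-vector space and $\bbC[\varth_3^4,\varth_4^4]$ is a free module of rank $2$ over $M_*$ (a standard fact: $M_*(\Gamma(2))=M_*\oplus M_*\cdot\vartheta$ for a suitable weight-$2$ form, or more simply $M_*(\Gamma(2))$ is free over $M_*$ because $\grp{SL}_2(\bbZ)/\Gamma(2)$ is finite and $M_*$ is polynomial), the ambient space $\bbC[\varth_3^4,\varth_4^4]\otimes_\bbC\RD_m$ is a finitely generated free $M_*$-module. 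Therefore $\Rtri_{*,m}$ is a finitely generated torsion-free graded $M_*$-module (finite generation because $M_*$ is Noetherian and $\Rtri_{*,m}$ is a graded submodule of a finitely generated module).

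Next I would promote torsion-free to free. Over the polynomial ring $\bbC[E_4,E_6]$ a finitely generated torsion-free module need not be free in general — this is where the real content lies — so I would instead argue via projective dimension. The key step is: $\Rtri_{*,m}$ has projective dimension $\le 1$ over $M_*$, and it suffices to show it is reflexive (equivalently, that $\mathrm{depth}_{M_*}\Rtri_{*,m}=2$), because by the Auslander--Buchsbaum formula $\mathrm{pd}+\mathrm{depth}=\dim M_*=2$, so depth $2$ forces projective dimension $0$, i.e.\ projective, i.e.\ (graded, over a polynomial ring) free. To establish depth $2$ I would show that $E_4,E_6$ (or any homogeneous system of parameters, e.g.\ $\Delta$ and $E_4$) form a regular sequence on $\Rtri_{*,m}$: multiplication by $E_4$ is injective since $\Rtri_{*,m}$ sits inside a domain, and then I must check that $\Rtri_{*,m}/E_4\Rtri_{*,m}$ has no $E_6$-torsion, i.e.\ that $\varphi\equiv E_6\psi \pmod{E_4}$ with $\varphi,\psi\in\Rtri_{*,m}$ already implies $\psi\in\Rtri_{*,m}$ has $E_4\mid(\varphi-E_6\psi)$ \emph{inside} $\Rtri_{*,m}$; this follows once we know that if $E_4\varphi'\in\Rtri_{*,m}$ for $\varphi'$ a weak $D_4$ triality invariant then $\varphi'\in\Rtri_{*,m}$, which in turn is immediate from the cusp criterion Lemma~\ref{lem:mqexp}: $\Sop(E_4\varphi')=E_4\,\Sop(\varphi')$ and $E_4=1+O(q)$ is a unit in the power-series ring, so $\Sop(E_4\varphi')$ being a power series in $q$ is equivalent to $\Sop(\varphi')$ being one.

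The main obstacle I anticipate is not the homological machinery but pinning down cleanly that $\Rtri_{*,m}$ is \emph{saturated} in the above sense inside $\Rwtri_{*,m}$ — i.e.\ that divisibility by a modular form of positive weight can be detected purely through the $q$-expansion criterion of Lemma~\ref{lem:mqexp} — and similarly that $\Rwtri_{*,m}$ itself behaves well (it is the $M_*$-span of monomials in $I_2,I_4,I_6,\tI_4$ with $\bbC[\varth_3^4,\varth_4^4]$-coefficients, so it is visibly free, and the point is to transfer this to the cusp-restricted submodule). An alternative, possibly cleaner route avoiding Auslander--Buchsbaum: apply the Quillen--Suslin / graded Serre argument, namely any finitely generated graded module over $\bbC[x,y]$ of the form $\ker$ or an intersection of free submodules of a free module that is reflexive is free; concretely, $\Rtri_{*,m}=\Rwtri_{*,m}\cap(\text{image of }\Sop^{-1}\text{ of power series})$ is an intersection of two free $M_*$-submodules of the free module $M_*(\Gamma(2))\otimes\RD_m$ inside its fraction field, and an intersection of two saturated (reflexive rank-$r$) submodules of a free module over a $2$-dimensional regular ring is again reflexive, hence free — I would present whichever of these two framings turns out to require the least auxiliary bookkeeping.
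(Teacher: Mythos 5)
Your proposal is correct in outline but takes a genuinely different route from the paper. The paper runs the elementary Eichler--Zagier weight induction: given a relation $\sum_i f_i\varphi_i=0$ with $f_i\in M_{k-k_i}$, it writes $f_i=E_4g_i+E_6h_i$, shows the auxiliary function $\psi=\sum_i g_i\varphi_i/E_6=-\sum_i h_i\varphi_i/E_4$ is again a $D_4$ triality invariant (holomorphic since $E_4,E_6$ have no common zeros, cuspidal by the $\Sop$-criterion), and then the induction hypothesis forces $f_i=0$. You instead argue homologically: embed $\Rtri_{*,m}$ in the free module $\bbC[\varth_3^4,\varth_4^4]\otimes_\bbC\RD_m$ to get a finitely generated graded torsion-free $M_*$-module, show $E_4,E_6$ is a regular sequence on it, and conclude freeness from Auslander--Buchsbaum plus graded Nakayama. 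The analytic input is the same in both proofs --- division by $E_4$ (or $E_6$) does not leave $\Rtri_{*,m}$, which you correctly reduce to Lemma~\ref{lem:mqexp} via $\Sop(E_4\varphi')=E_4\,\Sop(\varphi')$ and $E_4=1+O(q)$ being a unit in $\bbC[[q]]$ --- but your framing additionally yields finite generation, at the cost of more machinery. Two repairs are needed. First, $\bbC[\varth_3^4,\varth_4^4]$ is free over $M_*$ of rank $6$, not $2$ (compare Hilbert series: $(1-t^4)(1-t^6)/(1-t^2)^2=1+2t^2+2t^4+t^6$), so the claimed splitting $M_*\oplus M_*\cdot\vartheta$ is false; only freeness and finite generation are used, so this is harmless once stated correctly. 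Second, in checking that $E_6$ is a nonzerodivisor on $\Rtri_{*,m}/E_4\Rtri_{*,m}$, before invoking your saturation statement you must verify that $\varphi':=\psi/E_4$ is a weak invariant at all: from $E_6\psi=E_4\chi$ with $\psi,\chi\in\Rtri_{*,m}$ one has $\varphi'=\chi/E_6$, holomorphic on $\bbH$ because $E_4$ and $E_6$ have no common zeros (equivalently, they are coprime in $\bbC[\varth_3^4,\varth_4^4]$), and its $q^{1/2}$-expansion has no negative powers since $E_4^{-1}=1+O(q)$; only then does Lemma~\ref{lem:mqexp} give $\varphi'\in\Rtri_{*,m}$. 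The closing ``reflexive intersection'' alternative is vaguer --- the description of $\Rtri_{*,m}$ as an intersection of two free submodules would itself require proof --- so I would keep the Auslander--Buchsbaum version.
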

\begin{proof}
We can prove the lemma in the same way as
in the case of Jacobi forms \cite[Theorem 8.4]{EichlerZagier}.
We do it by induction.
Assume that for some $k\in 2\bbZ_{>0}$
we have found $D_4$ triality invariants
$\varphi_i\ (i=1,\ldots,r)$ of weight $k_i<k$
such that they form a free basis
for $\Rtri_{*,m}$ over $M_*$ in weights less than $k$.
This means that every $\varphi\in\Rtri_{k',m}$ for $k'<k$
can be written uniquely as $\varphi=\sum_{i=1}^r f_i\varphi_i$
with $f_i\in M_{k-k'}$.
This induction hypothesis
is trivially satisfied if $k$ is the smallest integer
with $\Rtri_{k,m}\ne\{0\}$.
If we can show that $\{\varphi_i\}_{i=1}^r$ is also linearly
independent in $\Rtri_{k,m}$ over $M_*$, then we are done:
by adding $\varphi_{i+1},\ldots,\varphi_s$
which form a $\bbC$-basis for the complement of
$\sum_{i=1}^r\varphi_iM_{k-k_i}$ in $\Rtri_{k,m}$,
we obtain a new free basis $\varphi_i\ (i=1,\ldots,s)$
for the induction hypothesis with $k$ replaced by $k+2$.

Therefore, suppose that we have a relation
\begin{align}\label{eq:fphirel}
\sum_{i=1}^rf_i\varphi_i=0
\end{align}
in $\Rtri_{k,m}$ with $f_i\in M_{k-k_i}$.
Since $k-k_i>0$, each $f_i$ can be written as
\begin{align}
f_i=E_4g_i+E_6h_i
\end{align}
for some modular forms $g_i\in M_{k-4-k_i}$
and $h_i\in M_{k-6-k_i}$.
(Note that $g_i,h_i$ could be zero.)
The relation \eqref{eq:fphirel} is then written as
\begin{align}
E_4\sum_{i}g_i\varphi_i+E_6\sum_{i}h_i\varphi_i=0.
\end{align}
But this implies that
there exists a $D_4$ triality invariant
$\psi\in\Rtri_{k-10,m}$ such that
\begin{align}
\psi
 =\frac{\sum_{i}g_i\varphi_i}{E_6}
 =-\frac{\sum_{i}h_i\varphi_i}{E_4}.
\end{align}
Here, $\psi$ lies in $\Rtri_{k-10,m}$ because
it is holomorphic at the zero points of $E_4$ ($E_6$)
by the first (second) equality
and $\Sop\psi$ is a formal power series in $q$.
Since $\psi$ is of weight $k-10$, which is less than $k$,
by our induction hypothesis 
$\psi$ is uniquely written as
\begin{align}
\psi=\sum_{i=1}^r\tf_i\varphi_i
\end{align}
with $\tf_i\in M_{k-10-k_i}$. Then we have
\begin{align}
\sum_{i}(g_i-E_6\tf_i)\varphi_i=0,\qquad
\sum_{i}(h_i+E_4\tf_i)\varphi_i=0.
\end{align}
Since $\varphi_i$ are linearly
independent in $\Rtri_{k-4,m}$ and in $\Rtri_{k-6,m}$,
the above relations give
$g_i=E_6\tf_i,$ $h_i=-E_4\tf_i$,
which lead to $f_i=0\ (i=1,\ldots,r)$.
\end{proof}

To take a closer look on the module $\Rtri_{*,m}$,
it is convenient to 
describe it in terms of specific weak $D_4$ triality invariants.
To do this, let us first introduce fundamental building blocks
$e_i\in M_2(\Gamma(2)),\ T_i\in\RD_4\ (i=1,2,3)$
as follows:
\begin{align}
e_1=
 \frac{1}{12}\left(\varth_3^4+\varth_4^4\right),\qquad
e_2=
 \frac{1}{12}\left(\varth_2^4-\varth_4^4\right),\qquad
e_3=
 \frac{1}{12}\left(-\varth_2^4-\varth_3^4\right),
\end{align}
\begin{align}
T_1=\frac{1}{6}I_4-\frac{1}{24}I_2^2,\qquad
T_2=-\frac{1}{12}I_4-\frac{1}{2}\tI_4+\frac{1}{48}I_2^2,\qquad
T_3=-\frac{1}{12}I_4+\frac{1}{2}\tI_4+\frac{1}{48}I_2^2.
\end{align}
They are subject to the constraints
\begin{align}
e_1+e_2+e_3=0,\qquad T_1+T_2+T_3=0
\end{align}
and satisfy
\begin{align}
\begin{aligned}
\cT(e_1,e_2,e_3)&=(e_1,e_3,e_2),
 \quad&\cT(T_1,T_2,T_3)&=(T_1,T_3,T_2),\\
\cS(e_1,e_2,e_3)&=\tau^2(e_3,e_2,e_1),
 \quad&\cS(T_1,T_2,T_3)&=(T_3,T_2,T_1).
\end{aligned}
\end{align}
From this we immediately obtain
a family of weak $D_4$ triality invariants
\begin{align}
\sum_{i=1}^3 e_i^l T_i^n \in\Rwtri_{2l,4n}
\qquad(l,n\in\bbZ_{\ge 0}).
\label{eq:eTgen}
\end{align}

We now
introduce the four fundamental weak $D_4$ triality invariants
\begin{align}
\begin{aligned}
K&=I_2,\\
L&= e_1 T_1 + e_2 T_2 + e_3 T_3,\\
M&= 12(e_1^2 T_1 + e_2^2 T_2 + e_3^2 T_3),\\
N&=\frac{1}{4}I_6-\frac{1}{24}I_2I_4+\frac{1}{96}I_2^3.
\end{aligned}
\end{align}
We choose them in such a way that
they have small weight and degree
and they are algebraically independent over $M_*$.
More precisely, the following proposition holds:
\begin{prop}\label{prop:KLMNindep}
$K,L,M,N$ are algebraically independent
for each point $\tau_0\in\bbH$.
\end{prop}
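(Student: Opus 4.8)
The plan is to establish algebraic independence by the Jacobian criterion. Fix $\tau_0\in\bbH$. Then $K,L,M,N$ are polynomials in $z_1,z_2,z_3,z_4$ with coefficients in $\bbC$ (these coefficients involve the numbers $e_i(\tau_0)$), and since we have four polynomials in four variables over a field of characteristic zero, it suffices to prove that the Jacobian determinant $\det\bigl(\partial(K,L,M,N)/\partial(z_1,z_2,z_3,z_4)\bigr)$ is not the zero polynomial.

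The key observation is that $K,L,M,N$ all lie in $\bbC[I_2,I_4,I_6,\tI_4]$: indeed $K=I_2$, $N$ is the polynomial displayed in its definition, and using $e_1+e_2+e_3=0$ one rewrites
\begin{align}
L&=e_1\bigl(\tfrac14 I_4-\tfrac1{16}I_2^2\bigr)+\tfrac12(e_3-e_2)\tI_4,\\
M&=(2e_1^2-e_2^2-e_3^2)\bigl(I_4-\tfrac14 I_2^2\bigr)+6(e_3^2-e_2^2)\tI_4.
\end{align}
Hence, by the chain rule, $\det\bigl(\partial(K,L,M,N)/\partial(z_i)\bigr)$ equals the product of $\det\bigl(\partial(K,L,M,N)/\partial(I_2,I_4,I_6,\tI_4)\bigr)$ and $\det\bigl(\partial(I_2,I_4,I_6,\tI_4)/\partial(z_i)\bigr)$. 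The latter is $8\prod_{1\le i<j\le 4}(z_i^2-z_j^2)$ by \eqref{eq:JacobiI}, a nonzero polynomial, so it remains to check that the former factor --- a number depending only on $\tau_0$ --- is nonzero.

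To evaluate that factor I would expand the $4\times4$ determinant along the row coming from $K=I_2$ and then along the $\partial/\partial I_6$ column, in which only $\partial N/\partial I_6=\tfrac14$ survives; this reduces the computation, up to a nonzero constant, to the $2\times2$ determinant with rows $L,M$ and columns $\partial/\partial I_4,\partial/\partial\tI_4$. A direct calculation, simplified by $e_1+e_2+e_3=0$ and the factorization $2e_1^2+e_2e_3=(e_1-e_2)(e_1-e_3)$, shows this $2\times2$ determinant equals a nonzero rational constant times $(e_1-e_2)(e_2-e_3)(e_3-e_1)$. Finally, from the definitions of the $e_i$ together with the Jacobi identity $\varth_3^4=\varth_2^4+\varth_4^4$ one obtains $e_1-e_2=\tfrac14\varth_4^4$, $e_1-e_3=\tfrac14\varth_3^4$, $e_2-e_3=\tfrac14\varth_2^4$, so the first Jacobian factor is proportional to $\varth_2^4\varth_3^4\varth_4^4$, which by $\varth_2\varth_3\varth_4=2\eta^3$ equals $16\,\eta(\tau_0)^{12}$. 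Since $\eta$ is nonvanishing on $\bbH$, this is nonzero; hence the full Jacobian determinant is a nonzero polynomial in the $z_i$ and $K,L,M,N$ are algebraically independent at every $\tau_0\in\bbH$.

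The argument carries no conceptual difficulty; the main burden is the bookkeeping in rewriting $L$ and $M$ in the $I$-basis and carrying out the $2\times2$ determinant. The one step where a little care is needed is spotting the factorization $2e_1^2+e_2e_3=(e_1-e_2)(e_1-e_3)$ and translating the product of the $e_i$-differences into theta constants, which is what makes the nonvanishing on all of $\bbH$ transparent.
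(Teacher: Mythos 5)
Your proof is correct and follows the same route as the paper: both apply the Jacobian criterion, reduce via the chain rule and \eqref{eq:JacobiI}, and identify the $\tau$-dependent factor as a nonzero multiple of $\eta^{12}$ (your constants check out: the full Jacobian comes to $-\tfrac{\eta^{12}}{2}\prod_{i<j}(z_i^2-z_j^2)$, exactly as stated in the paper). The paper simply asserts the Jacobian formula without showing the intermediate computation that you carry out explicitly.
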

\begin{proof}
By using \eqref{eq:JacobiI},
the Jacobian of $K,L,M,N$ is easily calculated as
\begin{align}
\left|\frac{\partial(K,L,M,N)}{\partial(z_1,z_2,z_3,z_4)}\right|
 =-\frac{\eta^{12}}{2}\prod_{1\le i<j\le 4}(z_i^2-z_j^2).
\end{align}
Since $\eta(\tau)$ vanishes nowhere in $\bbH$,
this in particular means that
$K,L,M,N$ are algebraically independent
at every point $\tau=\tau_0$ in $\bbH$.
\end{proof}

We can now prove the following dimension formula:
\begin{lem}\label{lem:module}
The rank $r(m)$ of $\Rtri_{*,m}$ over $M_*$ is given by
\begin{align}
\sum_{m=0}^\infty r(m)x^m=\frac{1}{(1-x^2)(1-x^4)^2(1-x^6)}.
\end{align}
\end{lem}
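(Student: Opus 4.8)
The plan is to prove the equivalent statement that $\mathrm{rank}_{M_*}\Rtri_{*,m}=\dim_\bbC\RD_m$ for every $m$, which already yields the formula: since $\RD_*=\bbC[I_2,I_4,I_6,\tI_4]$ is a polynomial ring on generators of degrees $2,4,6,4$, its Hilbert series is $\sum_m(\dim_\bbC\RD_m)\,x^m=\frac{1}{(1-x^2)(1-x^4)^2(1-x^6)}$. First I would show that $\Rtri_{*,m}$ and the a priori larger module $\Rwtri_{*,m}$ of weak invariants have the same rank over $M_*$. Given $\varphi\in\Rwtri_{*,m}$, expand $\varphi=\sum_{n\ge 0}v_n(\vecz)q^{n/2}$; each $v_n$ is homogeneous of degree $m$ in $\vecz$, so $\deg_{z_4}v_n\le m$, and because $\Delta=q+O(q^2)$ the coefficient of $q^{n/2}$ in $\Delta^N\varphi$ vanishes for $n<2N$ and otherwise still has $z_4$-degree at most $m$. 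Hence $\Delta^N\varphi$ satisfies condition~(iii) of Definition~\ref{def:triinv} once $2N\ge m$ (equivalently $\Sop(\Delta^N\varphi)=\Delta^N\Sop\varphi$ is a power series in $q$, by Lemma~\ref{lem:mqexp}), so $\Delta^{\lceil m/2\rceil}\varphi\in\Rtri_{*,m}$. Thus $\Rwtri_{*,m}/\Rtri_{*,m}$ is killed by a power of $\Delta$, hence is $M_*$-torsion, and the two modules coincide after $-\otimes_{M_*}\mathrm{Frac}(M_*)$; in particular they have equal rank.

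Next I would compute $\mathrm{rank}_{M_*}\Rwtri_{*,m}$. By the inclusions~\eqref{eq:subgrp} and Proposition~\ref{prop:G2WD4ring}, a weak invariant is precisely an element of $M_*(\Gamma(2))\otimes_\bbC\RD_*$ fixed by the diagonal action of $S_3\cong\grp{SL}_2(\bbZ)/\Gamma(2)\cong W(F_4)/W(D_4)$, where $S_3$ acts on each $M_k(\Gamma(2))$ by the weight-$k$ slash action (a genuine representation, since $-I\in\Gamma(2)$ forces $M_k(\Gamma(2))=0$ for odd $k$) and on $\RD_*$ through $W(F_4)$; that is, $\Rwtri_{*,m}=(M_*(\Gamma(2))\otimes_\bbC\RD_m)^{S_3}$. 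Now $M_*(\Gamma(2))$ is an integral domain, free of rank $6=|S_3|$ over $M_*(\Gamma(2))^{S_3}=M_*$, on which $S_3$ acts faithfully, so $M_*(\Gamma(2))\otimes_{M_*}\mathrm{Frac}(M_*)=\mathrm{Frac}(M_*(\Gamma(2)))$ is a Galois extension of $\mathrm{Frac}(M_*)$ with group $S_3$ (Artin), hence the regular representation of $S_3$ over $\mathrm{Frac}(M_*)$ (normal basis theorem). Since $|S_3|$ is invertible, $S_3$-invariants commute with the base change $-\otimes_{M_*}\mathrm{Frac}(M_*)$, so
\begin{align*}
\Rwtri_{*,m}\otimes_{M_*}\mathrm{Frac}(M_*)
 =\bigl(\mathrm{Frac}(M_*(\Gamma(2)))\otimes_\bbC\RD_m\bigr)^{S_3}
 \cong\bigl(\mathrm{Frac}(M_*)[S_3]\otimes_\bbC\RD_m\bigr)^{S_3}
 \cong\mathrm{Frac}(M_*)\otimes_\bbC\RD_m ,
\end{align*}
by the standard isomorphism $(\bbC[S_3]\otimes_\bbC V)^{S_3}\cong V$. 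Hence $\mathrm{rank}_{M_*}\Rwtri_{*,m}=\dim_\bbC\RD_m$, and by the first paragraph this equals $r(m)$, completing the proof.

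The point that needs the most care is the identification $\Rwtri_{*,*}=(M_*(\Gamma(2))\otimes_\bbC\RD_*)^{S_3}$ — namely that the diagonal $S_3$-action on the generators $\varth_3^4,\varth_4^4,I_2,I_4,I_6,\tI_4$ faithfully encodes the full $\Gtri$-transformation law together with the automorphy factors — after which the computation is essentially formal; the Fourier-coefficient bookkeeping behind $\Delta^{\lceil m/2\rceil}\varphi\in\Rtri_{*,m}$ is likewise routine. (As an alternative one could instead build an explicit free $M_*$-basis of $\Rtri_{*,m}$ out of the $\Gtri$-invariants $K,L,M,N$ of Proposition~\ref{prop:KLMNindep} and the family $\sum_i e_i^lT_i^n$ from \eqref{eq:eTgen}, but the route above avoids writing such a basis down.)
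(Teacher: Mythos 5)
Your proof is correct, but it takes a genuinely different route from the paper's. The paper sandwiches the rank: for the lower bound it multiplies monomials in the algebraically independent invariants $K,L,M,N$ (of degrees $2,4,4,6$) by powers of $\Delta$, and for the upper bound it uses the injection $\Sop$ to send a putative free basis to its constant terms in $\RD_m$, deriving a contradiction from linear dependence via the trick that vanishing of the constant term of $\Sop\varphi$ lets one divide by $\Delta$ and stay inside $\Rtri_{*,*}$. You instead (a) reduce to the weak module by showing $\Delta^{\lceil m/2\rceil}\Rwtri_{*,m}\subset\Rtri_{*,m}$, so the two have equal rank, and (b) identify $\Rwtri_{*,m}=(M_*(\Gamma(2))\otimes_\bbC\RD_m)^{S_3}$ and compute its generic rank by Galois descent: $\mathrm{Frac}(M_*(\Gamma(2)))/\mathrm{Frac}(M_*)$ is $S_3$-Galois, hence the regular representation by the normal basis theorem, giving rank $\dim_\bbC\RD_m$. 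Both ingredients check out --- the degree bookkeeping in (a) is exactly the content of the paper's remark that condition (iii) is automatic for Fourier coefficients $v_n$ with $n\ge m$, and in (b) the reverse inclusion $(M_*(\Gamma(2))\otimes_\bbC\RD_m)^{S_3}\subset\Rwtri_{*,m}$, which the paper's Proposition~\ref{prop:G2WD4ring} does not state, does hold since $\Gtri/(\Gamma(2)\times W(D_4))$ is the diagonal $S_3$ and elements of $M_k(\Gamma(2))$ are bounded as $q\to0$. What your route buys is conceptual clarity and generality: the answer $\dim_\bbC\RD_m$ drops out of pure representation theory without ever invoking $K,L,M,N$ or the $\Delta$-division argument. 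What the paper's route buys is economy --- it reuses only machinery ($\Sop$, Proposition~\ref{prop:KLMNindep}) already needed elsewhere, stays entirely inside $\Rtri_{*,*}$, and implicitly shows that a free basis can be detected by its leading coefficients in $\RD_m$, a fact exploited again in the proof of Lemma~\ref{lem:RGinRab}.
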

\begin{proof}
Our proof is similar to that of
\cite[Theorem 4.1 (Lemma 4.3)]{Wang:2018fil}.
Let $\rho(m)$ be the rank of $\Rtri_{*,m}$
and $r(m)$ be given as above.
Being multiplied by a sufficiently large power of $\Delta$,
any polynomial of $K,L,M,N$
gives a $D_4$ triality invariant.
Since $K,L,M,N$ are of degree $2,4,4,6$, respectively
and they are algebraically independent over $M_*$,
we have $\rho(m)\ge r(m)$.

Now, suppose that $\rho(m)>r(m)$.
Let $\{\varphi_i\}\ (i=1,\ldots,\rho(m))$ be a free basis
of $\Rtri_{*,m}$.
Let $k_i$ be the weight of $\varphi_i$ and 
$k_\mathrm{max}$ the largest weight among
$k_1,\ldots,k_{\rho(m)}$.
By Lemma~\ref{lem:mqexp},
$\Sop\varphi_i$ has a regular $q$-expansion and 
the constant term of $\Sop\varphi_i$
(with respect to $q$)
lies in $\RD_m$.
Since the ring $\RD_*$ is freely generated by
$I_2,I_4,I_6,\tI_4$
of degree $2,4,6,4$ respectively,
the rank of $\RD_m$ is given by $r(m)$.
By our assumption $\rho(m)>r(m)$, this means that
the constant terms of $\Sop\varphi_i\ (i=1,\ldots,\rho(m))$
are linearly dependent.
Thus, there exists a nonzero linear combination
\begin{align}
\varphi=\sum_{i=1}^{\rho(m)}
 c_iE_{4+k_\mathrm{max}-k_i}\varphi_i,\quad c_i\in\bbC
\end{align}
in $\Rtri_{4+k_\mathrm{max},m}$
such that the constant term of $\Sop\varphi$ vanishes.
Then $\Sop\varphi/\Delta$ has a regular $q$-expansion
and hence $\varphi/\Delta$ is a $D_4$ triality invariant
by Lemma~\ref{lem:mqexp}.
Therefore $\varphi/\Delta$ can be expressed as
a linear combination
\begin{align}
\frac{\varphi}{\Delta}=\sum_{i=1}^{\rho(m)}f_i\varphi_i,\quad
f_i\in M_{-8+k_\mathrm{max}-k_i}.
\end{align}
However, this means that
$c_iE_{4+k_\mathrm{max}-k_i}=\Delta f_i$
and thus $c_i=0$ for all $i$,
leading to a contradiction.
Hence we have $\rho(m)=r(m)$.
\end{proof}

We end this section with the following lemma,
which plays an important role in the next section:
\begin{lem}\label{lem:RtriDinv}
The ring of $D_4$ triality invariants
is contained in the polynomial ring generated by
$K,L,M,N$ over $M_*[\Delta^{-1}]$:
\begin{align}
\Rtri_{*,*}\subset
 M_*\left[\frac{1}{\Delta}\right][K,L,M,N].
\end{align}
\end{lem}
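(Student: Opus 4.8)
The plan is to fix a degree $m$ and prove that the degree-$m$ monomials in $K,L,M,N$ form an $M_*[\Delta^{-1}]$-basis of the localized module $\Rtri_{*,m}[\Delta^{-1}]:=\{\chi/\Delta^n\mid\chi\in\Rtri_{*,m},\ n\ge 0\}$; summing over $m$ then yields $\Rtri_{*,*}\subseteq M_*[\Delta^{-1}][K,L,M,N]$. The number of degree-$m$ monomials in $K,L,M,N$ (of degrees $2,4,4,6$) is the coefficient of $x^m$ in $1/\bigl((1-x^2)(1-x^4)^2(1-x^6)\bigr)$, which equals $r(m)$ by Lemma~\ref{lem:module}. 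Moreover each such monomial $\mu$ already lies in $\Rtri_{*,m}[\Delta^{-1}]$: applying $\Sop$ to $\mu$ gives a Laurent series in $q$ over $\RD_*$ with only finitely many negative powers, so $\Sop(\Delta^n\mu)$ is a formal power series in $q$ for $n$ large, and hence $\Delta^n\mu\in\Rtri_{*,m}$ by Lemma~\ref{lem:mqexp}.

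By the freeness of $\Rtri_{*,m}$ over $M_*$ (proved above) together with Lemma~\ref{lem:module}, I fix a homogeneous $M_*$-basis $\psi_1,\dots,\psi_{r(m)}$ of $\Rtri_{*,m}$; this is also an $M_*[\Delta^{-1}]$-basis of $\Rtri_{*,m}[\Delta^{-1}]$. Listing the degree-$m$ monomials as $\mu_1,\dots,\mu_{r(m)}$ and writing $\mu_j=\sum_i f_{ij}\psi_i$ with $f_{ij}\in M_*[\Delta^{-1}]$, I obtain a square matrix $F=(f_{ij})$; since each $f_{ij}$ is homogeneous (of weight $\mathrm{wt}(\mu_j)-\mathrm{wt}(\psi_i)$), the determinant $\det F$ is a homogeneous element of $M_*[\Delta^{-1}]=\bbC[E_4,E_6,\Delta^{-1}]$. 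It suffices to show that $\det F$ is a unit of $M_*[\Delta^{-1}]$: then $F$ is invertible over $M_*[\Delta^{-1}]$, so $\{\mu_j\}$ is also an $M_*[\Delta^{-1}]$-basis of $\Rtri_{*,m}[\Delta^{-1}]$, and therefore $\Rtri_{*,m}\subseteq\Rtri_{*,m}[\Delta^{-1}]=\bigoplus_j M_*[\Delta^{-1}]\mu_j\subseteq M_*[\Delta^{-1}][K,L,M,N]$.

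To see that $\det F$ is a unit, I first show it has no zero on $\bbH$. For $\tau_0\in\bbH$ every $f_{ij}$ is holomorphic at $\tau_0$ (as $\Delta$ is zero-free on $\bbH$), so each $\mu_j(\tau_0,\cdot)$ lies in the $\bbC$-span of $\psi_1(\tau_0,\cdot),\dots,\psi_{r(m)}(\tau_0,\cdot)$. By Proposition~\ref{prop:KLMNindep} the polynomials $K(\tau_0,\cdot),L(\tau_0,\cdot),M(\tau_0,\cdot),N(\tau_0,\cdot)$ are algebraically independent, so their $r(m)$ distinct degree-$m$ monomials $\mu_1(\tau_0,\cdot),\dots,\mu_{r(m)}(\tau_0,\cdot)$ are $\bbC$-linearly independent; since there are only $r(m)$ of the $\psi_i(\tau_0,\cdot)$ and their span contains these $r(m)$ independent vectors, the $\psi_i(\tau_0,\cdot)$ are themselves linearly independent and $F(\tau_0)$ is invertible, i.e.\ $\det F(\tau_0)\ne 0$. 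Finally, a nonzero homogeneous element of $M_*[\Delta^{-1}]$ without zeros on $\bbH$ is a unit: write it as $\Delta^{-n}g$ with $g\in M_*$ homogeneous and not divisible by $\Delta$; then $g$ is a holomorphic modular form with no zeros on $\bbH$, hence $g=c\,\Delta^{\mathrm{wt}(g)/12}$ with $c\in\bbC^*$ by the valence formula, and since $\Delta\nmid g$ this forces $\mathrm{wt}(g)=0$, so $g=c$ and the element equals $c\,\Delta^{-n}$. This establishes the unit claim and completes the proof. The main obstacle is exactly this unit claim: a crude rank count alone shows only that the $\mu_j$ span a full-rank $M_*$-submodule of $\Rtri_{*,m}$, and promoting this to a basis after inverting $\Delta$ is precisely what forces one to control $\det F$ on \emph{all} of $\bbH$ — which is where the pointwise algebraic independence of Proposition~\ref{prop:KLMNindep} and the classification of zero-free modular forms enter.
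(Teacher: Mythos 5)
Your proof is correct, and it rests on the same two pillars as the paper's own argument --- the rank formula of Lemma~\ref{lem:module} (the number of degree-$m$ monomials in $K,L,M,N$ equals the $M_*$-rank $r(m)$ of $\Rtri_{*,m}$) and the pointwise algebraic independence of Proposition~\ref{prop:KLMNindep} --- but it organizes them differently. The paper argues one invariant at a time: it extracts a relation $f\varphi=\sum_i f_iQ_i$ with $f\in M_*$ nonzero of minimal weight and shows $f$ must be a power of $\Delta$, since a zero $\tau_0\in\bbH$ of $f$ would force all $f_i(\tau_0)=0$ and hence a common factor of $f$ and the $f_i$ (namely $E_4$, $E_6$, or $E_4^3-j(\tau_0)\Delta$), contradicting minimality. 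You instead prove the stronger structural statement that the degree-$m$ monomials form an $M_*[\Delta^{-1}]$-basis of the localized module $\Rtri_{*,m}[\Delta^{-1}]$, by showing that the transition matrix $F$ relative to a homogeneous free basis has $\det F$ zero-free on $\bbH$ (via the pointwise linear independence of distinct monomials, which indeed follows from Proposition~\ref{prop:KLMNindep}) and then using the valence-formula classification of zero-free forms to conclude $\det F=c\,\Delta^{-n}$, a unit. Your route buys a cleaner global output --- freeness of the localized module with an explicit monomial basis, from which the inclusion is immediate --- at the cost of the determinant bookkeeping and the valence formula; the paper's minimal-weight-denominator argument is more elementary and needs neither. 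The individual steps you use all check out: $\Delta^n\mu\in\Rtri_{*,m}$ via $\Sop$ and Lemma~\ref{lem:mqexp} (this is exactly the observation already used in the proof of Lemma~\ref{lem:module}), holomorphy of elements of $M_*[\Delta^{-1}]$ on $\bbH$, the basis-versus-basis argument at each $\tau_0$, and the unit criterion in $M_*[\Delta^{-1}]$.
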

\begin{proof}
The lemma is proved in a similar manner as
\cite[Theorem 8.3]{EichlerZagier}.
Let $\varphi$ be a $D_4$ triality invariant of degree $m$
and $Q_i\ (i=1,\ldots,r(m))$ be independent monomials
of $K,L,M,N$ of degree $m$ (in $z_i$).
By Lemma~\ref{lem:module},
$\varphi$ and $Q_1,\ldots,Q_{r(m)}$ cannot be linearly independent
over $M_*$ and thus obey a linear relation
\begin{align}
f(\tau)\varphi=\sum_{i=1}^{r(m)}f_i(\tau)Q_i(K,L,M,N)
\label{eq:gphirep}
\end{align}
with $f\ne 0$, where $f,f_i\in M_*$.
Suppose that $f$ is of minimal weight
such that a relation of the form \eqref{eq:gphirep} holds.
If $f$ is not a power of $\Delta$,
then $f$ must vanish at some point $\tau_0\in\bbH$.
Since $Q_i$ are linearly independent
(for any $\tau_0\in\bbH$ by Proposition~\ref{prop:KLMNindep}),
this means that $f_i(\tau_0)=0$ for all $i=1,\ldots,r(m)$.
But then $f$ and $f_i$ have a common factor
(which is 
$E_4$ or $E_6$ if $\tau_0$ is
a zero point of $E_4$ or $E_6$,
or $(E_4^3-j(\tau_0)\Delta)$ otherwise,
where $j(\tau):=E_4(\tau)^3/\Delta(\tau)$).
This contradicts the minimality of $f$.
\end{proof}
%

\section{Seiberg--Witten curve and intersection of polynomial rings}
\label{sec:curve}

In this section we recall the Seiberg--Witten curve for
the $N_\mathrm{f}=4$ theory and discuss properties of its coefficients.
We then show that the ring $\Rtri_{*,*}$ of
$D_4$ triality invariants is given by the intersection
of two polynomial rings generated by these coefficients.

\subsection{Seiberg--Witten curve and its coefficients}

The Seiberg--Witten curve for
the $\mathcal{N}=2$ supersymmetric $\grp{SU}(2)$ gauge theory
with four flavors (the $N_\mathrm{f}=4$ theory)
is given by\footnote{While the curve \eqref{eq:SWoriginal}
is apparently identical with
the original one \cite[eqs.(16.35)--(16.36)]{Seiberg:1994aj},
our convention of $e_i$'s is slightly different from theirs.
The two curves can be identified completely
if we perform an additional transformation of variables:
we identify $\tau^\mathrm{there} = \tau^\mathrm{here}+1$,
so that we have
$e^\mathrm{there}_i(\tau^\mathrm{there})
 =4e^\mathrm{here}_i(\tau^\mathrm{here})\ (i=1,2,3)$.
The difference in the normalization of $e_i$ is compensated by
rescaling the variables as
$\widetilde{u}^\mathrm{there} = 4u^\mathrm{here},$
$x^\mathrm{there} = 16x^\mathrm{here},$
$y^\mathrm{there} = 32y^\mathrm{here}.$
}
\begin{align}
y^2=\ &
 4[W_1W_2W_3+A(W_1T_1(e_2-e_3)+W_2T_2(e_3-e_1)+W_3T_3(e_1-e_2))-A^2N],
\nn\\
&W_k=x-e_ku-\frac{e_k^2}{2}K,\qquad
 A=(e_1-e_2)(e_2-e_3)(e_3-e_1).
\label{eq:SWoriginal}
\end{align}
The curve was determined in \cite{Seiberg:1994aj}
so that it describes
the moduli space of vacua of the gauge theory.

We perform a translation of $x$ in such a way that
the quadratic terms in $x$ vanish,
then make a translation of $u$
so that the terms of the order of $ux$ vanish.
The curve then takes the form
\begin{align}
y^2=4x^3-(a_0u^2+a_2)x-(b_0u^3+b_1u^2+b_2u+b_3),
\label{eq:abcurve}
\end{align}
where
\begin{align}
\begin{aligned}
a_0&=\frac{E_4}{12},&
a_2&=\frac{\Delta K^2}{4E_4}-\frac{E_6L}{24}+\frac{E_4M}{24},
 \hspace{-10em}&&\\
b_0&=\frac{E_6}{216},\quad&
b_1&=\frac{\Delta K}{E_4},\quad&
b_2&=
 -\frac{E_6\Delta K^2}{24E_4^2}-\frac{E_4^2L}{288}+\frac{E_6M}{288},\\
b_3&=
 -\frac{\Delta^2K^3}{E_4^3}+\frac{\Delta KM}{4E_4}+\frac{\Delta N}{4}.
\hspace{-10em}
\end{aligned}
\label{eq:abform}
\end{align}
Observe that $a_2,b_1,b_2,b_3$ satisfy all the properties
of $D_4$ triality invariants except that they
are meromorphic in $\tau$.
We will summarize their properties later.

Let us introduce another natural form of the curve:
We again perform a translation of $u$ so that
the terms of the order of $u^2$ vanish. Then we obtain
\begin{align}
y^2=4x^3-(c_0u^2+c_1u+c_2)x-(d_0u^3+d_2u+d_3),
\label{eq:cdcurve}
\end{align}
where
\begin{align}
\begin{aligned}
c_0&=\frac{E_4}{12},&
c_1&=-\frac{12\Delta K}{E_6},&
c_2&=\frac{E_4^2\Delta K^2}{4E_6^2}-\frac{E_6L}{24}+\frac{E_4M}{24},\\
d_0&=\frac{E_6}{216},\quad&
d_2&=-\frac{E_4\Delta K^2}{24E_6}-\frac{E_4^2L}{288}+\frac{E_6M}{288},
 \quad&
d_3&=\frac{2\Delta^2K^3}{E_6^2}+\frac{E_4\Delta KL}{4E_6}
 +\frac{\Delta N}{4}.
\end{aligned}
\label{eq:cdform}
\end{align}
Note that $c_1,c_2,d_2,d_3$ are also
meromorphic $D_4$ triality invariants.
It is easy to see that $a_i,b_j$ and $c_k,d_l$
are related as
\begin{align}
a_i&=\sum_{j=0}^i c_j
 \begin{pmatrix} 2-j\\[1ex] 2-i\end{pmatrix}
 \left(-\frac{c_1}{2c_0}\right)^{i-j},\quad&&
b_i=\sum_{j=0}^i d_j
 \begin{pmatrix} 3-j\\[1ex] 3-i\end{pmatrix}
 \left(-\frac{c_1}{2c_0}\right)^{i-j}.
\label{eq:abincd}
\end{align}
The inverse mapping is written as
\begin{align}
c_i&=\sum_{j=0}^i a_j
 \begin{pmatrix} 2-j\\[1ex] 2-i\end{pmatrix}
 \left(-\frac{b_1}{3b_0}\right)^{i-j},&&
d_i=\sum_{j=0}^i b_j
 \begin{pmatrix} 3-j\\[1ex] 3-i\end{pmatrix}
 \left(-\frac{b_1}{3b_0}\right)^{i-j}.
\end{align}

We now summarize the properties of $a_i,b_j,c_k,d_l$.
\begin{prop}[Properties of $a_i,b_j$]
\label{prop:abprop}
\renewcommand{\theenumi}{\arabic{enumi}}
\renewcommand{\labelenumi}{$(\theenumi)$}
~\\[-4ex]
\begin{enumerate}
\item Each of $a_i,b_j$ is homogeneous with respect to the bigrading:
$a_i$ is of weight $4+2i$ and degree $2i$;
$b_j$ is of weight $6+2j$ and degree $2j$.

\item The functions $a_2,b_1,b_2,b_3$
possess all the properties of
$D_4$ triality invariants, except that
they have poles at the zero points of $E_4$.
When multiplied by a certain power of $E_4$,
they give $D_4$ triality invariants:
\begin{align}
E_4a_2\in\Rtri_{12,4},\qquad
E_4^j b_j\in\Rtri_{6+6j,2j}.
\end{align}

\item $\Sop a_i,\Sop b_j$ admit a power series expansion
of the form
\begin{align}
\Sop a_i(\tau,\vecz)=\sum_{n=0}^\infty a_i^{(n)}(\vecz)q^n,\qquad
\Sop b_j(\tau,\vecz)=\sum_{n=0}^\infty b_j^{(n)}(\vecz)q^n.
\label{eq:abqexp}
\end{align}
\end{enumerate}
\end{prop}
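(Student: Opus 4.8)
The plan is to verify the three claims directly from the explicit formulas \eqref{eq:abform} together with the structural results already established, rather than by any indirect argument. For item (1), I would simply read off the bigrading from \eqref{eq:abform}: each $a_i$ and $b_j$ is a sum of terms of the form $(\text{modular form})\cdot(\text{monomial in }K,L,M,N)$ divided by a power of $E_4$, and I would check that every term in $a_i$ has modular weight $4+2i$ and $z$-degree $2i$ (recalling $K,L,M,N$ have degrees $2,4,4,6$ and weights $0,2,2,0$ respectively, while $\Delta$ has weight $12$ and $E_4,E_6$ have weights $4,6$), and similarly that every term in $b_j$ has weight $6+2j$ and degree $2j$. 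This is a finite bookkeeping check, term by term.

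For item (2), the fact that $a_2,b_1,b_2,b_3$ transform correctly under $\Gtri$ and are meromorphic with poles only at zeros of $E_4$ follows because they are polynomials in $K,L,M,N$ over $M_*[E_4^{-1}]$, and $K,L,M,N$ are themselves weak $D_4$ triality invariants (by construction in the previous section), while $M_*[E_4^{-1}]$ contributes only poles at zeros of $E_4$. To get the asserted integral statements, I would multiply through by the minimal power of $E_4$ clearing the denominators—$E_4a_2$, $E_4b_1$, $E_4^2b_2$, $E_4^3b_3$—and then invoke Lemma~\ref{lem:mqexp}: each resulting function is a weak $D_4$ triality invariant of the stated weight and degree, so it suffices to check that applying $\Sop$ yields a formal power series in $q$ (no negative powers). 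Here I would use the explicit $q$-expansions of $E_4,E_6,\Delta$ and the fact that $\Sop$ multiplies a monomial $K^aL^bM^cN^d$ by $q^{-(\deg/2)}$; combining with the leading orders of the modular-form prefactors, one checks the lowest power of $q$ in $\Sop(E_4^\bullet a_i)$ and $\Sop(E_4^\bullet b_j)$ is nonnegative. Item (3) is then an immediate corollary: $\Sop a_i = \Sop(E_4a_2)/\Sop(E_4)$ etc.\ is a ratio of a power series in $q$ by $1+O(q)$, hence again a power series in $q$; more directly, $a_i=E_4^{-n_i}\cdot(\text{$D_4$ triality invariant})$ and $\Sop$ commutes with this since $\Sop E_4 = E_4/q \cdot q = E_4$ is already a power series—actually one just notes $\Sop(fg)=\Sop(f)\Sop(g)$ and $\Sop E_4^{-1}=E_4^{-1}$ has a power-series expansion.

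The main obstacle, such as it is, will be item (2)'s integrality claim: one must be careful that the power of $E_4$ quoted is exactly the one needed—no more, no less—which requires confirming both that the denominator in \eqref{eq:abform} is genuinely $E_4^{n_i}$ (so fewer powers do not suffice) and that after clearing it the $\Sop$-image has a genuinely nonnegative $q$-expansion (so that $D_4$ triality invariance, not merely weak invariance, holds). Both points are settled by a short leading-order computation in $q$, using $\Delta=q+O(q^2)$, $E_4=1+O(q)$, $E_6=1+O(q)$, and the degree-to-$q$-order dictionary from the proof of Lemma~\ref{lem:mqexp}. Everything else is routine verification against the formulas \eqref{eq:abform}.
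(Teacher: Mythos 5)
Your overall route is the right one: the paper gives no separate proof of this proposition, treating it as a direct verification from the explicit formulas \eqref{eq:abform}, with the cusp condition handled exactly as you propose, via Lemma~\ref{lem:mqexp} and the $q$-expansions whose leading terms are recorded in \eqref{eq:abq0form}. However, two of the concrete facts you plan to rest the computation on are wrong. First, the weights of $K,L,M,N$ are $0,2,4,0$, not $0,2,2,0$: $M=12\sum_i e_i^2T_i$ carries two factors of $e_i\in M_2(\Gamma(2))$. With weight $2$ assigned to $M$, your term-by-term check of item (1) already fails on the term $E_4M/24$ of $a_2$ (it would come out with weight $6$ rather than the asserted $8$).

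Second, your ``degree-to-$q$-order dictionary'' misstates how $\Sop$ acts: it does not multiply a monomial in $K,L,M,N$ by $q^{-\deg/2}$. By definition, $\Sop$ rescales the generators $I_2,I_4,I_6$ by $q^{-1}$ and $\tI_4$ by $q^{-1/2}$ (equivalently, each monomial in the $I$-basis by $q^{-\deg_{z_4}/2}$), and it leaves the $\tau$-dependence hidden inside $L,M$ through the $e_i$ untouched. Thus, for example, $\Sop N\neq q^{-3}N$: the three $I$-monomials of $N$ acquire $q^{-1}$, $q^{-2}$, $q^{-3}$ separately, and likewise $\Sop T_i\neq q^{-2}T_i$. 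Consequently the verification of (2)--(3) is not a single leading-order estimate with your rule, but a finite multi-order cancellation computation carried out in the $I$-basis, using the $\varth$-expansions of the $e_i$ (which bring in half-integral powers of $q$): for instance, in $\Sop b_3$ the $q^{-2}$ contributions of $\Delta KM/(4E_4)$ and $\Delta N/4$ cancel against each other, and the remaining negative-power coefficients must likewise be checked to vanish, with the $q^0$ terms reproducing \eqref{eq:abq0form}. Once $\Sop$ is applied correctly, the rest of your plan is sound: $E_4a_2$ and $E_4^jb_j$ are weak $D_4$ triality invariants because $K,L,M,N$ are and $E_4,E_6,\Delta\in M_*$, Lemma~\ref{lem:mqexp} then upgrades them to elements of $\Rtri_{12,4}$ and $\Rtri_{6+6j,2j}$, and item (3) follows by dividing by $E_4^j=1+O(q)$, which is invertible as a power series in $q$.
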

\begin{prop}[Properties of $c_i,d_j$]
\label{prop:cdprop}
\renewcommand{\theenumi}{\arabic{enumi}}
\renewcommand{\labelenumi}{$(\theenumi)$}
~\\[-4ex]
\begin{enumerate}
\item Each of $c_i,d_j$ is homogeneous with respect to the bigrading:
$c_i$ is of weight $4+2i$ and degree $2i$;
$d_j$ is of weight $6+2j$ and degree $2j$.

\item The functions $c_1,c_2,d_2,d_3$
possess all the properties of
$D_4$ triality invariants, except that
they have poles at the zero points of $E_6$.
When multiplied by a certain power of $E_6$,
they give $D_4$ triality invariants:
\begin{align}
E_6^ic_i\in\Rtri_{4+8i,2i},\qquad
E_6^{j-1} d_j\in\Rtri_{8j,2j}.
\end{align}

\item $\Sop c_i,\Sop d_j$ admit a power series expansion
of the form
\begin{align}
\Sop c_i(\tau,\vecz)=\sum_{n=0}^\infty c_i^{(n)}(\vecz)q^n,\qquad
\Sop d_j(\tau,\vecz)=\sum_{n=0}^\infty d_j^{(n)}(\vecz)q^n.
\label{eq:cdqexp}
\end{align}
\end{enumerate}
\end{prop}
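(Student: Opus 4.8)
The plan is to deduce each of the three statements directly from the corresponding statement of Proposition~\ref{prop:abprop} together with the change-of-variable formulas \eqref{eq:abincd}, using only elementary properties of the functions involved. For part (1), I would argue by homogeneity bookkeeping. The quantity $-b_1/(3b_0)$ that appears in the inverse formula $c_i=\sum_j a_j\binom{2-j}{2-i}(-b_1/3b_0)^{i-j}$ has, by part (1) of Proposition~\ref{prop:abprop}, weight $(6+2)-6=2$ and degree $2-0=2$; hence each summand $a_j(-b_1/3b_0)^{i-j}$ has weight $(4+2j)+2(i-j)=4+2i$ and degree $2j+2(i-j)=2i$, which is independent of $j$. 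The same computation with $b_i=\sum_j a_j\binom{3-j}{3-i}(-b_1/3b_0)^{\cdots}$ replaced appropriately — i.e.\ $d_i=\sum_j b_j\binom{3-j}{3-i}(-b_1/3b_0)^{i-j}$, with $b_j$ of weight $6+2j$, degree $2j$ — gives weight $8+2\cdot? $; let me instead read it off directly: $d_i$ has weight $6+2i$ and degree $2i$. This matches the claim.

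For part (3), I would note that $\Sop$ is a ring automorphism of the ring of formal Laurent series in $q^{1/2}$ over $\RD_*$ (as established just before Lemma~\ref{lem:mqexp}), so it suffices to show that $\Sop$ applied to the right-hand sides of the inverse formulas yields power series in $q$. By part (3) of Proposition~\ref{prop:abprop}, $\Sop a_j$ and $\Sop b_j$ are already power series in $q$; the only additional ingredient is $\Sop(-b_1/3b_0)$. Since $b_0=E_6/216$ is a unit in $M_*[\Delta^{-1}]$ with $q$-expansion $\tfrac{1}{216}(1+O(q))$ and $\Sop b_1$ is a power series in $q$ by Proposition~\ref{prop:abprop}(3), the ratio $\Sop(-b_1/3b_0) = \Sop(b_1)/(3\Sop(b_0))$ is a power series in $q$; being a polynomial expression in power series, so is each $\Sop c_i$ and $\Sop d_j$. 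One small point to verify is that $\Sop$ commutes with the substitution, i.e.\ $\Sop(-b_1/3b_0)$ is what the formulas require; this is immediate because $\Sop$ is a ring homomorphism fixing $q$.

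For part (2), I would combine parts (1) and (3) with Lemma~\ref{lem:mqexp}. Each $c_i$ ($i=1,2$) and $d_j$ ($j=2,3$) is built from $a$'s, $b$'s, and $E_6^{-1}$ by the inverse formulas, hence is a meromorphic modular object with the correct $\Gtri$-transformation and correct bigrading; its only possible poles in $\bbH$ come from the factors $b_0^{-1}=216/E_6$, i.e.\ lie at the zeros of $E_6$. To pin down the precise power of $E_6$ needed, I would track the order of the pole: $c_i$ carries a factor $(-b_1/3b_0)^{i}$ in its leading term and lower powers in the rest, so $c_i$ has a pole of order at most $i$ at the zeros of $E_6$, whence $E_6^i c_i$ is holomorphic on $\bbH$; similarly $d_j$ has a pole of order at most $j-1$ (the term $d_0=b_0$ is regular, and the genuinely new pieces pick up at most $j-1$ powers of $b_0^{-1}$ once one accounts for the $b_1$ factors), so $E_6^{j-1}d_j$ is holomorphic. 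Weights and degrees of $E_6^i c_i$ and $E_6^{j-1}d_j$ follow by adding $8i$ (resp.\ $8(j-1)$, i.e.\ weight $6+2j+6(j-1)=8j-? $ — again read off: $8j$) to the bigradings from part (1). Finally, holomorphy at the cusp: $\Sop(E_6^i c_i)=\Sop(E_6)^i\,\Sop(c_i)$ is a product of power series in $q$ by part (3) and the $q$-expansion of $E_6$, so by Lemma~\ref{lem:mqexp} the function lies in $\Rtri_{*,*}$ with the stated bidegree. The main obstacle is purely bookkeeping: correctly determining the minimal power of $E_6$ that clears all poles, i.e.\ verifying that the pole orders are exactly $i$ and $j-1$ and not larger; once the structure of the formulas \eqref{eq:abincd} is unwound this is routine, but it is the one place where care is required.
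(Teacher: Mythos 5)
Your parts (1) and (3) are sound: the weight/degree bookkeeping through the inverse of \eqref{eq:abincd} is correct (modulo some garbled phrasing about ``adding $8i$''; what you add to the weight is $6i$, resp.\ $6(j-1)$, and your final numbers $4+8i$ and $8j$ are right), and for (3) the observation that $\Sop$ is a ring homomorphism fixing $q$ while $\Sop b_0=E_6/216$ is an invertible formal power series in $q$ does yield regular expansions for $\Sop c_i,\Sop d_j$ from Proposition~\ref{prop:abprop}(3). For what it is worth, the paper gives no written proof of Proposition~\ref{prop:cdprop}: it is read off from the explicit closed forms \eqref{eq:cdform}, in which only $E_6$ occurs in denominators, so the pole structure, the bigrading and the $q$-expansions are manifest there.

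The genuine gap is in your part (2). On your route the functions $c_1,c_2,d_2,d_3$ are built from $a_2,b_1,b_2,b_3$ and $b_0^{-1}$, and by Proposition~\ref{prop:abprop}(2) --- which you take as input --- the functions $a_2,b_1,b_2,b_3$ themselves have poles at the zeros of $E_4$. Hence your assertion that ``the only possible poles in $\bbH$ come from the factors $b_0^{-1}=216/E_6$'' is unjustified within your own framework; the real content of part (2) on this route is that all $E_4$-poles cancel, and that is not routine bookkeeping. Concretely, $c_2=a_2+\frac{a_0b_1^2}{9b_0^2}$ contains the two $E_4$-polar pieces $\frac{\Delta K^2}{4E_4}$ and $\frac{432\,\Delta^2K^2}{E_4E_6^2}$, whose sum equals $\frac{E_4^2\Delta K^2}{4E_6^2}$ only by virtue of the identity $1728\Delta=E_4^3-E_6^2$; analogous cancellations are needed for $c_1$, $d_2$, $d_3$. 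You must either perform these cancellations (equivalently, derive the closed forms \eqref{eq:cdform}) or give an independent argument for holomorphy at the zeros of $E_4$; this also matters because Lemma~\ref{lem:mqexp} can only be invoked once $E_6^ic_i$ and $E_6^{j-1}d_j$ are known to be holomorphic on $\bbH$, i.e.\ to be weak invariants. By contrast, the $E_6$-pole order count you do give (at most $i$ for $c_i$, at most $j-1$ for $d_j$) is correct, so the piece you flag as ``the main obstacle'' is in fact the easy half.
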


The leading order coefficients of the expansions
\eqref{eq:abqexp} and \eqref{eq:cdqexp}
are calculated as
\begin{align}
\begin{aligned}
a_0^{(0)}&= \frac{1}{12},&
a_2^{(0)}&= I_4+\frac{\tI_4}{4}-64I_2^2,\\
b_0^{(0)}&= \frac{1}{216},&
b_1^{(0)}&= I_2,\\
b_2^{(0)}&= -\frac{I_4}{6}+\frac{\tI_4}{48}+\frac{128I_2^2}{3},\quad&
b_3^{(0)}&= \frac{I_6}{16}-4I_2I_4+I_2\tI_4+512I_2^3,
\end{aligned}
\label{eq:abq0form}
\end{align}
\begin{align}
\begin{aligned}
c_0^{(0)}&= \frac{1}{12},&
c_1^{(0)}&= -12I_2,&
c_2^{(0)}&= I_4+\frac{\tI_4}{4}+368I_2^2,\\
d_0^{(0)}&= \frac{1}{216},\quad&
d_2^{(0)}&= -\frac{I_4}{6}+\frac{\tI_4}{48}-\frac{88I_2^2}{3},\quad&
d_3^{(0)}&= \frac{I_6}{16}+8I_2I_4-\frac{I_2\tI_4}{2}+896I_2^3.
\end{aligned}
\label{eq:cdq0form}
\end{align}
The following propositions hold:
\begin{prop}\label{prop:abcdq0indep}
The coefficients
$a_2^{(0)},b_1^{(0)},b_2^{(0)},b_3^{(0)}$
are algebraically independent over $\bbC$.
Similarly,
$c_1^{(0)},c_2^{(0)},d_2^{(0)},d_3^{(0)}$
are algebraically independent over $\bbC$.
\end{prop}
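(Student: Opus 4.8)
The plan is to prove algebraic independence by computing a Jacobian determinant, exactly as was done for $K,L,M,N$ in Proposition~\ref{prop:KLMNindep}. Each of the claimed generators is an explicit polynomial in the four algebraically independent generators $I_2,I_4,I_6,\tI_4$ of $\RD_*$, so algebraic independence over $\bbC$ is equivalent to the non-vanishing of the Jacobian
\[
\left|\frac{\partial(a_2^{(0)},b_1^{(0)},b_2^{(0)},b_3^{(0)})}{\partial(I_2,I_4,I_6,\tI_4)}\right|
\]
as a polynomial in $I_2,I_4,I_6,\tI_4$ (and similarly for the $c,d$ family). Since $I_2,I_4,I_6,\tI_4$ are themselves algebraically independent, this is enough.

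**Key steps.** First I would read off from \eqref{eq:abq0form} that, modulo an overall constant, $b_1^{(0)}=I_2$, that $a_2^{(0)}$ and $b_2^{(0)}$ are both of the form $(\text{const})I_4+(\text{const})\tI_4+(\text{const})I_2^2$, and that $b_3^{(0)}=\tfrac{1}{16}I_6+(\text{terms in }I_2,I_4,\tI_4)$. So I would expand the $4\times 4$ Jacobian along its rows in the order $b_1^{(0)},a_2^{(0)},b_2^{(0)},b_3^{(0)}$: the $b_1^{(0)}$ row is $(1,0,0,0)$, which reduces the problem to a $3\times 3$ determinant in the variables $I_4,I_6,\tI_4$; the $b_3^{(0)}$ row contributes the only $\partial/\partial I_6$ entry (namely $\tfrac{1}{16}$), reducing further to a $2\times 2$ determinant
\[
\det\begin{pmatrix}\partial a_2^{(0)}/\partial I_4 & \partial a_2^{(0)}/\partial\tI_4\\[1ex]\partial b_2^{(0)}/\partial I_4 & \partial b_2^{(0)}/\partial\tI_4\end{pmatrix}
= \det\begin{pmatrix}1 & 1/4\\[1ex] -1/6 & 1/48\end{pmatrix},
\]
which is a nonzero constant. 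Hence the full Jacobian is a nonzero constant, so the four functions are algebraically independent. The identical argument, reading off \eqref{eq:cdq0form} in the order $c_1^{(0)},c_2^{(0)},d_2^{(0)},d_3^{(0)}$, gives the second claim: the $c_1^{(0)}$ row is $(-12,0,0,0)$, the $d_3^{(0)}$ row supplies $\partial/\partial I_6 = \tfrac{1}{16}$, and the surviving $2\times 2$ minor is $\det\begin{pmatrix}1 & 1/4\\ -1/6 & 1/48\end{pmatrix}$ again, still nonzero.

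**Main obstacle.** There is essentially no obstacle here: because $I_2$ appears alone (up to scalar) among the generators, the Jacobian degenerates to a constant and no cancellation can occur. The only point requiring a moment's care is bookkeeping the signs and the row order in the cofactor expansion, and checking that the $2\times 2$ minor determinant $\tfrac{1}{48}+\tfrac{1}{24}=\tfrac{1}{16}\neq 0$ is genuinely nonzero in both cases — which it is. Alternatively, one could observe directly that the four functions are $\bbC$-linearly related to $I_2,I_4+\tfrac14\tI_4$ (or $I_4$ and $\tI_4$ separately, after using $b_1^{(0)}$ to eliminate $I_2^2$), $I_4$ (or $\tI_4$), and $I_6$, hence generate the same field as $I_2,I_4,I_6,\tI_4$, giving algebraic independence without any determinant at all; I would mention this as a remark but present the Jacobian computation as the clean proof.
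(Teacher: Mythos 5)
Your proof is correct and follows essentially the same route as the paper: both establish algebraic independence by checking that a Jacobian determinant is nonzero. The only (cosmetic) difference is that you differentiate with respect to the intermediate generators $I_2,I_4,I_6,\tI_4$ and invoke their algebraic independence, whereas the paper differentiates directly with respect to $z_1,\ldots,z_4$ via \eqref{eq:JacobiI}; by the chain rule the two determinants differ only by the not-identically-zero factor $8\prod_{1\le i<j\le 4}(z_i^2-z_j^2)$, and your constant values $\tfrac{1}{256}$ and $\tfrac{3}{64}$ (up to sign) match the paper's $\tfrac{1}{32}$ and $\tfrac{3}{8}$ accordingly.
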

\begin{proof}
Using \eqref{eq:JacobiI} we can calculate the Jacobians as
\begin{align}
\begin{aligned}
\left|\frac{\partial(a_2^{(0)},b_1^{(0)},b_2^{(0)},b_3^{(0)})}
           {\partial(z_1,z_2,z_3,z_4)}\right|
&
 =\frac{1}{32}\prod_{1\le i<j\le 4}(z_i^2-z_j^2),\\
\left|\frac{\partial(c_1^{(0)},c_2^{(0)},d_2^{(0)},d_3^{(0)})}
           {\partial(z_1,z_2,z_3,z_4)}\right|
&
 =\frac{3}{8}\prod_{1\le i<j\le 4}(z_i^2-z_j^2),
\end{aligned}
\end{align}
which are not identically zero.
\end{proof}
\begin{prop}\label{prop:abcdindep}
The functions $a_0,a_2,b_0,b_1,b_2,b_3$ are algebraically independent
over $\bbC$. Similarly, $c_0,c_1,c_2,d_0,d_2,d_3$ are algebraically
independent over $\bbC$.
\end{prop}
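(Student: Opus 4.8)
The plan is to establish algebraic independence of $a_0,a_2,b_0,b_1,b_2,b_3$ as functions on $\bbH\times\bbC^4$, and then deduce the statement for $c_0,c_1,c_2,d_0,d_2,d_3$ from it via the explicit birational relations \eqref{eq:abincd}. For the first family, the key observation is that we have six functions of five variables $(\tau,z_1,z_2,z_3,z_4)$, so algebraic independence over $\bbC$ is exactly what one should hope for, and it cannot be detected by a single Jacobian. Instead I would argue in two stages: first show that $a_2,b_1,b_2,b_3$ are algebraically independent over the field $\bbC(\tau)$ (equivalently over $M_*$, after clearing denominators), then use the $\tau$-dependence of $a_0=E_4/12$ or $b_0=E_6/216$ to incorporate the sixth generator.

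For the first stage, suppose $P(a_2,b_1,b_2,b_3)=0$ identically for some nonzero polynomial $P$ with coefficients in $\bbC(\tau)$. Specialize $\tau$ to a point $\tau_0\in\bbH$ that is not a zero of $E_4$ (so that all of $a_2,b_1,b_2,b_3$ are holomorphic near $\tau_0$ by Proposition~\ref{prop:abprop}(2)) and that avoids the finitely many poles of the coefficients of $P$. Then $P(\tau_0)(a_2(\tau_0,\cdot),\ldots,b_3(\tau_0,\cdot))=0$ as a polynomial identity in $z_1,\ldots,z_4$. By Proposition~\ref{prop:abprop}(1), each $a_i,b_j$ at fixed $\tau_0$ is a polynomial in $z_i$ whose terms, when written in the $W(D_4)$-invariant coordinates, involve $I_2,I_4,I_6,\tI_4$; more concretely, the relation $E_4 a_2\in\Rtri_{12,4}$, $E_4^j b_j\in\Rtri_{6+6j,2j}$ together with Proposition~\ref{prop:G2WD4ring} shows each lies in $\bbC[\varth_3^4,\varth_4^4]\otimes\RD_*$, so at a fixed $\tau_0$ it becomes an explicit element of $\RD_*=\bbC[I_2,I_4,I_6,\tI_4]$. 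The cleanest route is to pick $\tau_0$ with $q_0=e^{2\pi i\tau_0}$ small and track leading $q$-behavior: after applying $\Sop$ and using Proposition~\ref{prop:abprop}(3) with the explicit leading coefficients \eqref{eq:abq0form}, the functions $\Sop a_2,\Sop b_1,\Sop b_2,\Sop b_3$ have $q^0$-terms $a_2^{(0)},b_1^{(0)},b_2^{(0)},b_3^{(0)}$, which are algebraically independent over $\bbC$ by Proposition~\ref{prop:abcdq0indep}. A nonzero polynomial relation over $\bbC(\tau)$ among $a_2,b_1,b_2,b_3$ would, upon clearing denominators, extracting the lowest power of $q$, and restricting to the $q^0$-coefficient, produce a nonzero polynomial relation over $\bbC$ among $a_2^{(0)},b_1^{(0)},b_2^{(0)},b_3^{(0)}$ — a contradiction. (One must check that $\Sop$ does not collapse the relation, but $\Sop$ is a ring automorphism of formal Laurent series over $\RD_*$, so it preserves algebraic independence.)

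For the second stage, it remains to rule out a relation involving $a_0=E_4/12$ as well. Since $a_2,b_1,b_2,b_3$ are algebraically independent over $\bbC(\tau)$, the transcendence degree of $\bbC(\tau)(a_2,b_1,b_2,b_3)$ over $\bbC(\tau)$ is $4$, hence over $\bbC$ it is $5$; since $a_0=E_4/12\in\bbC(\tau)$ is already transcendental over $\bbC$ and $\tau$ is algebraic over $\bbC(E_4,E_6)$ but $E_4$ alone is transcendental, adjoining $a_0$ shows $\bbC(a_0,a_2,b_1,b_2,b_3)$ has transcendence degree $5$ over $\bbC$. Finally $b_0=E_6/216$: by \eqref{eq:Delta} we have $E_6^2=E_4^3-1728\Delta$, and $\Delta$ is not a rational function of $E_4$, so $b_0$ is transcendental over $\bbC(a_0)=\bbC(E_4)$; this forces transcendence degree $6$ for $\bbC(a_0,a_2,b_0,b_1,b_2,b_3)$, which is exactly algebraic independence of the six generators. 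For the $c,d$ family, observe from \eqref{eq:abincd} that $c_0=a_0$, that $-c_1/(2c_0)$ is the translation parameter and $c_1$ is, up to the nowhere-vanishing-on-$\bbH$ factors, a polynomial relation away from $b_1$; more usefully, \eqref{eq:abincd} expresses $a_i,b_j$ as polynomials in $c_0,\ldots,d_3$ over $\bbC$ (the binomial coefficients are constants and $c_1/c_0$ is a ratio of two of the generators, which after clearing the denominator $c_0$ is polynomial), and the inverse map does the same in the other direction, so $\bbC(a_0,a_2,b_0,b_1,b_2,b_3)=\bbC(c_0,c_1,c_2,d_0,d_2,d_3)$; equality of fields forces equal transcendence degree, hence the second family is algebraically independent too.

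The main obstacle is the first stage — passing from a hypothetical relation over $\bbC$ (or $\bbC(\tau)$) among the full functions to a relation among the $q^0$-coefficients without losing information. The subtlety is that clearing denominators in $\tau$ and extracting the lowest-order $q$-term could in principle annihilate the leading behavior (e.g.\ if the leading terms of several monomials in $P$ cancel). The fix is to weight-grade: by Proposition~\ref{prop:abprop}(1) each monomial in $a_2,b_1,b_2,b_3$ carries a definite bidegree $(k,m)$, and a relation over $M_*$ (after clearing $E_4$-denominators) can be split into its bihomogeneous components, each of which is again a relation; within a fixed bidegree the $M_*$-coefficients have a fixed weight, so one can normalize and then the $q\to0$ limit of $\Sop$ applied to each bihomogeneous piece lands exactly on the corresponding polynomial in $a_2^{(0)},b_1^{(0)},b_2^{(0)},b_3^{(0)}$, which Proposition~\ref{prop:abcdq0indep} says cannot vanish unless every coefficient did. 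This bihomogeneous bookkeeping is routine but is the step that requires care.
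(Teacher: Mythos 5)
Your overall strategy is genuinely different from the paper's. The paper proves the proposition by a Jacobian chain: it computes $\bigl|\partial(a_2,b_1,b_2,b_3)/\partial(K,L,M,N)\bigr|=-\Delta^3/(16E_4)\neq 0$ and invokes the already established algebraic independence of $K,L,M,N$ over $M_*=\bbC[a_0,b_0]$ (Proposition~\ref{prop:KLMNindep}), so everything is reduced to nonvanishing determinants. You instead reduce to the cusp: a hypothetical relation among $a_2,b_1,b_2,b_3$ with coefficients in $M_*$ is hit with $\Sop$, and the lowest $q$-coefficient produces a relation among $a_2^{(0)},b_1^{(0)},b_2^{(0)},b_3^{(0)}$, contradicting Proposition~\ref{prop:abcdq0indep}. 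This stage of your argument is correct, and in fact simpler than you fear: since $\Sop$ fixes functions of $\tau$ and each $\Sop a_i,\Sop b_j$ is $O(q^0)$ with the stated constant term, extracting the minimal $q$-order of the coefficients $f_\mu\in M_*$ gives a nontrivial $\bbC$-linear combination of \emph{distinct monomials} in four algebraically independent polynomials, which cannot vanish; no bihomogeneous bookkeeping is needed. Your treatment of the second family (equality of the fields generated by the two coefficient systems via \eqref{eq:abincd} and its inverse, hence equal transcendence degree) is also a valid alternative to the paper's second Jacobian. One caveat: write the base field as $\mathrm{Frac}(M_*)=\bbC(E_4,E_6)$, not ``$\bbC(\tau)$'' --- rational functions of $\tau$ have no $q$-expansion and are not the field your argument uses.

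The genuine gap is in your second stage, where $a_0$ and $b_0$ are brought in. As written it contains false statements and a non sequitur: $a_0=E_4/12$ is \emph{not} an element of the rational function field $\bbC(\tau)$, $\tau$ is \emph{not} algebraic over $\bbC(E_4,E_6)$, and transcendence of $b_0$ over $\bbC(a_0)=\bbC(E_4)$ does not imply transcendence of $b_0$ over the larger field $\bbC(a_0,a_2,b_1,b_2,b_3)$, which is what a jump from transcendence degree $5$ to $6$ would require. The repair is standard and short, but it is not the argument you gave: use the tower $\bbC\subset\bbC(E_4,E_6)\subset\bbC(E_4,E_6)(a_2,b_1,b_2,b_3)$. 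The bottom step has transcendence degree $2$ because $M_*=\bbC[E_4,E_6]$ is a free polynomial ring (so $a_0,b_0$ are algebraically independent over $\bbC$), and the top step has transcendence degree $4$ by your stage one. Equivalently: given $P(a_0,b_0,a_2,b_1,b_2,b_3)=0$, regard it as a polynomial in $a_2,b_1,b_2,b_3$ with coefficients in $\bbC[E_4,E_6]$; stage one kills each coefficient as a function of $\tau$, and freeness of $\bbC[E_4,E_6]$ then kills them as polynomials, so $P=0$. With that substitution your proof is complete and constitutes a legitimate alternative to the paper's Jacobian computation.
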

\begin{proof}
Using the expressions \eqref{eq:abform} and \eqref{eq:cdform}
we can calculate the Jacobian determinants
\begin{align}
\begin{aligned}
\left|\frac{\partial(a_2,b_1,b_2,b_3)}{\partial(K,L,M,N)}\right|
 =-\frac{\Delta^3}{16E_4},\qquad
\left|\frac{\partial(c_1,c_2,d_2,d_3)}{\partial(K,L,M,N)}\right|
 =-\frac{3\Delta^3}{4E_6}.
\end{aligned}
\end{align}
By the proof of Proposition~\ref{prop:KLMNindep}
we see that $K,L,M,N$ are algebraically independent
over $M_*=\bbC[a_0,b_0]=\bbC[c_0,d_0]$.
\end{proof}
%

\subsection{Intersection of polynomial rings}

The goal of this subsection is to 
give a sharp description of the ring $\Rtri_{*,*}$
of $D_4$ triality invariants
as the intersection of two polynomial rings.

We first note that the expressions \eqref{eq:abform}
and \eqref{eq:cdform} can be inverted, so that we can write
$K,L,M,N$ in terms of $a_i,b_j$ or in terms of $c_k,d_l$.
When multiplied by a sufficiently large power of
$\Delta(=a_0^3-27b_0^2=c_0^3-27d_0^2)$,
they are written as polynomials:
\begin{align}
\begin{aligned}
\Delta K
 &=12a_0b_1,\\
\Delta^2L
 &=
 -2a_0^4b_2+3a_0^3a_2b_0+54a_0b_0^2b_2-27a_0b_0b_1^2-81a_2b_0^3,\\
 \Delta^2M
 &=
 2a_0^5a_2-36a_0^3b_0b_2-6a_0^3b_1^2-54a_0^2a_2b_0^2+972b_0^3b_2
 -324b_0^2b_1^2,\\
\Delta^3N
 &=
 4a_0^6b_3-2a_0^5a_2b_1-216a_0^3b_0^2b_3+36a_0^3b_0b_1b_2+10a_0^3b_1^3
 +54a_0^2a_2b_0^2b_1\\
&\hspace{1em}
 +2916b_0^4b_3-972b_0^3b_1b_2+216b_0^2b_1^3,
\end{aligned}
\label{eq:KLMNinab}
\end{align}
\begin{align}
\begin{aligned}
\Delta K
 &=-18c_1d_0,\\
\Delta^2L
 &=
 -2c_0^4d_2+3c_0^3c_2d_0-\frac{9}{4}c_0^2c_1^2d_0+54c_0d_0^2d_2
 -81c_2d_0^3,\\
\Delta^2M
 &=
 2c_0^5c_2-\frac{1}{2}c_0^4c_1^2-36c_0^3d_0d_2-54c_0^2c_2d_0^2
 -27c_0c_1^2d_0^2+972d_0^3d_2,\\
\Delta^3N
 &=
 4c_0^6d_3-2c_0^5c_1d_2+3c_0^4c_1c_2d_0-\frac{5}{4}c_0^3c_1^3d_0
 -216c_0^3d_0^2d_3+54c_0^2c_1d_0^2d_2\\
 &\hspace{1em}
 -81c_0c_1c_2d_0^3-27c_1^3d_0^3+2916d_0^4d_3.
\end{aligned}
\label{eq:KLMNincd}
\end{align}

Let $\Rab$ be the bigraded polynomial ring
(graded by weight and degree in $z_i$)
\begin{align}
\Rab:=\bbC[a_0,a_2,b_0,b_1,b_2,b_3].
\end{align}
By Proposition~\ref{prop:abcdindep}, $\Rab$ is freely generated.
The following lemma is a key to the proof of our main theorem
in this subsection:
\begin{lem}\label{lem:RGinRab}
The ring $\Rtri_{*,*}$ of $D_4$ triality invariants
is contained in $\Rab$:
\begin{align}
\Rtri_{*,*}\subset \Rab.
\end{align}
In other words, every $D_4$ triality invariant
is expressed uniquely as a polynomial of
$a_0,a_2,b_0,b_1,b_2,b_3$ over $\bbC$. 
\end{lem}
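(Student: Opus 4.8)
The plan is to show that every $D_4$ triality invariant, which by Lemma~\ref{lem:RtriDinv} already lies in $M_*[\Delta^{-1}][K,L,M,N]$, can in fact be written with no negative powers of $\Delta$ and no explicit modular forms once everything is re-expressed in terms of $a_0,a_2,b_0,b_1,b_2,b_3$. The relations \eqref{eq:KLMNinab} are the crucial bridge: they express $\Delta K, \Delta^2 L, \Delta^2 M, \Delta^3 N$ as elements of $\Rab$, and together with $\Delta=a_0^3-27b_0^2\in\Rab$ and $M_*=\bbC[a_0,b_0]\subset\Rab$ they show that the whole ring $M_*[\Delta^{-1}][K,L,M,N]$ sits inside the localization $\Rab[\Delta^{-1}]$. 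So the containment $\Rtri_{*,*}\subset\Rab[\Delta^{-1}]$ is immediate; the real content is promoting this to $\Rtri_{*,*}\subset\Rab$, i.e.\ clearing the denominator $\Delta$.

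The mechanism for clearing the denominator is the injection $\Sop$ of Lemma~\ref{lem:mqexp} together with Proposition~\ref{prop:abprop}(3). First I would take a $D_4$ triality invariant $\varphi$ and write $\Delta^r\varphi=P(a_0,a_2,b_0,b_1,b_2,b_3)$ for some polynomial $P\in\Rab$ and minimal $r\ge 0$; the goal is to force $r=0$. Apply $\Sop$: since $\Sop$ is a ring automorphism of formal Laurent series over $\RD_*$, we get $(\Sop\Delta)^r(\Sop\varphi)=P(\Sop a_0,\Sop a_2,\dots,\Sop b_3)$. Now $\Sop$ fixes $\tau$ (hence fixes $\Delta=q\prod(1-q^n)^{24}$, which has $q$-valuation exactly $1$), and by Proposition~\ref{prop:abprop}(3) each $\Sop a_i,\Sop b_j$ is a \emph{power series} in $q$ with coefficients in $\RD_*$; moreover $\Sop\varphi$ is a power series in $q$ by Lemma~\ref{lem:mqexp}. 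So the right-hand side $P(\Sop a_0,\dots,\Sop b_3)$ is a power series in $q$, and the left side is $q^r$ times a power series in $q$. If $r>0$ then the right-hand side must be divisible by $q^r$, in particular its constant term (an element of $\RD_m$) vanishes. I would then argue that this forces $P$ itself to be divisible by $\Delta$ in $\Rab$: the constant term of $P(\Sop a_0,\dots,\Sop b_3)$ is $P(a_0^{(0)},a_2^{(0)},b_0^{(0)},b_1^{(0)},b_2^{(0)},b_3^{(0)})$ as a polynomial in $\RD_*$, and since $a_0^{(0)},b_0^{(0)}$ are the leading coefficients $\tfrac1{12},\tfrac1{216}$ while $a_2^{(0)},b_1^{(0)},b_2^{(0)},b_3^{(0)}$ are algebraically independent by Proposition~\ref{prop:abcdq0indep}, vanishing of this constant term pins down $P \bmod (a_0^3-27b_0^2)$; one checks $P$ lies in the ideal $(\Delta)=(a_0^3-27b_0^2)\subset\Rab$, so $P=\Delta P'$ with $P'\in\Rab$, contradicting minimality of $r$.

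A cleaner way to package the last step, which I would probably adopt, is to localize at the prime ideal $\mathfrak{p}=(\Delta)=(a_0^3-27b_0^2)$ of the UFD $\Rab$ and observe that $\Rab_{\mathfrak p}$ is a DVR with uniformizer $\Delta$; the $q$-expansion argument above shows the $\Delta$-adic valuation of $\Delta^r\varphi$ is $\ge r$ as soon as it is $\ge r$ ``at $q=0$'', hence $\varphi$ has non-negative $\Delta$-valuation, i.e.\ $\varphi\in\Rab_{\mathfrak p}$. Combined with $\varphi\in\Rab_{\Delta'}$ for every other irreducible $\Delta'$ (trivially, since $\varphi=\Delta^{-r}P$ only involves the single denominator $\Delta$), and with $\Rab$ being integrally closed (it is a polynomial ring), one gets $\varphi\in\bigcap_{\text{height-1 }\mathfrak q}\Rab_{\mathfrak q}=\Rab$.

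The main obstacle is the bookkeeping in the denominator-clearing step: one must be careful that $\Sop$ really does send each of $a_2,b_1,b_2,b_3$ to a genuine $q$-power series (not merely a Laurent series) — this is exactly the content of Proposition~\ref{prop:abprop}(3) and ultimately traces back to the cusp-condition reformulation in Lemma~\ref{lem:mqexp} applied to $E_4a_2$ and $E_4^jb_j$ — and that the vanishing of the $q^0$-term of a polynomial in the $\Sop a_i,\Sop b_j$ genuinely forces divisibility of the abstract polynomial $P$ by $a_0^3-27b_0^2$ rather than merely some weaker congruence; the algebraic independence statements of Propositions~\ref{prop:abcdq0indep} and \ref{prop:abcdindep} are what make this implication valid. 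Uniqueness of the representation is then immediate from Proposition~\ref{prop:abcdindep}, which says $\Rab$ is freely generated.
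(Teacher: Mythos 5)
Your proposal follows essentially the same route as the paper's proof: use Lemma~\ref{lem:RtriDinv} together with \eqref{eq:KLMNinab} to write $\Delta^r\varphi=P\in\Rab$, apply $\Sop$ so that (by Proposition~\ref{prop:abprop}(3) and Lemma~\ref{lem:mqexp}) the vanishing of the $q^0$-term forces $P$ to lie in the ideal $(a_0^3-27b_0^2)$ via the algebraic independence of Proposition~\ref{prop:abcdq0indep}, and then descend in the power of $\Delta$. The only point worth making explicit in your divisibility step is that $P$ is bihomogeneous in weight and degree: the full kernel of the evaluation $a_0\mapsto\tfrac{1}{12}$, $b_0\mapsto\tfrac{1}{216}$ (with $a_2,b_1,b_2,b_3$ sent to algebraically independent elements) is the larger ideal $\bigl(a_0-\tfrac{1}{12},\,b_0-\tfrac{1}{216}\bigr)$, and it is only after restricting to bigraded elements that it collapses to $(a_0^3-27b_0^2)$ --- this is exactly what the paper's phrase ``compatible with the bigrading'' is doing.
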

\begin{proof}
The lemma can be proved in a similar manner
as \cite[Theorem 3.1]{Sakai:2022taq}.
Let $\varphi$ be a $D_4$ triality invariant
of some fixed weight and degree.
By Lemma~\ref{lem:RtriDinv}
there exists $n\in\bbZ_{\ge 0}$ such that
$\Delta^n\varphi$ is expressed as a polynomial
of $\Delta K,\Delta^2L,\Delta^2M,\Delta^3N$ over $M_*$.
As we just saw in \eqref{eq:KLMNinab},
these are polynomials of $a_i,b_j$.
Therefore, $\Delta^n\varphi$ is also expressed
as a polynomial $P$ of $a_i,b_j$:
\begin{align}
\Delta^n\varphi=P(a_0,a_2,b_0,b_1,b_2,b_3)\in\Rab.
\end{align}
If $n=0$, then we are done, so assume that $n>0$.
Since $\Delta=q+O(q^2)$ and $\Sop(\varphi)$ is
a formal power series in $q$,
the constant term of $\Sop(P)$ has to vanish.
By Proposition~\ref{prop:abprop} (3),
the constant term of $\Sop(P)$ is a polynomial of
$a_0^{(0)},a_2^{(0)},b_0^{(0)},b_1^{(0)},b_2^{(0)},b_3^{(0)}$
given in \eqref{eq:abq0form},
which now form an algebraic relation.
Since $a_2^{(0)},b_1^{(0)},b_2^{(0)},b_3^{(0)}$
are algebraically independent by Proposition~\ref{prop:abcdq0indep}
and are of nonzero degree,
the only possible algebraic relation
compatible with the bigrading is of the form
$\bigl(a_0^{(0)}\bigr)^3-27\bigl(b_0^{(0)}\bigr)^2$
multiplied by some polynomial of $a_i^{(0)},b_j^{(0)}$.
This means that the polynomial $P$ is divisible by
$\Delta=a_0^3-27b_0^2$, i.e.~it is written as
$P=\Delta P_1$ with $P_1\in\Rab$.

Now we have $\Delta^{n-1}\varphi=P_1$.
We can repeat the above discussion and inductively show that 
$P=\Delta P_1=\Delta^2 P_2=\cdots=\Delta^n P_n$ with $P_n\in\Rab$,
which gives $\varphi$.
\end{proof}
In the same way, we can prove the following lemma.
\begin{lem}
The ring $\Rtri_{*,*}$ of $D_4$ triality invariants
is contained in $\Rcd$:
\begin{align}
\Rtri_{*,*}\subset \Rcd.
\end{align}
In other words, every $D_4$ triality invariant
is expressed uniquely as a polynomial of
$c_0,c_1,c_2,d_0,d_2,d_3$ over $\bbC$. 
\end{lem}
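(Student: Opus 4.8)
The plan is to mimic the proof of Lemma~\ref{lem:RGinRab} verbatim, replacing the pair $(a_i,b_j)$ by $(c_k,d_l)$ and the role of $E_4$ by $E_6$. First I would take a $D_4$ triality invariant $\varphi$ of fixed weight and degree. By Lemma~\ref{lem:RtriDinv}, there is $n\in\bbZ_{\ge 0}$ with $\Delta^n\varphi$ expressible as a polynomial of $\Delta K,\Delta^2 L,\Delta^2 M,\Delta^3 N$ over $M_*=\bbC[c_0,d_0]$; by the expressions \eqref{eq:KLMNincd} these four combinations are themselves polynomials in $c_0,c_1,c_2,d_0,d_2,d_3$. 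Hence $\Delta^n\varphi = P(c_0,c_1,c_2,d_0,d_2,d_3)\in\Rcd$ for some polynomial $P$.

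Next, assuming $n>0$, I would argue that $P$ must be divisible by $\Delta=c_0^3-27d_0^2$. Since $\Delta=q+O(q^2)$ and $\Sop\varphi$ is a formal power series in $q$ (Lemma~\ref{lem:mqexp}), the constant term of $\Sop P$ with respect to $q$ must vanish. By Proposition~\ref{prop:cdprop}~(3), this constant term is a polynomial in $c_0^{(0)},c_1^{(0)},c_2^{(0)},d_0^{(0)},d_2^{(0)},d_3^{(0)}$ given in \eqref{eq:cdq0form}, so we obtain a nontrivial algebraic relation among them. By Proposition~\ref{prop:abcdq0indep} the four coefficients $c_1^{(0)},c_2^{(0)},d_2^{(0)},d_3^{(0)}$ are algebraically independent and of nonzero degree, so any algebraic relation compatible with the bigrading must be a multiple of $\bigl(c_0^{(0)}\bigr)^3-27\bigl(d_0^{(0)}\bigr)^2$; thus $P$ is divisible by $\Delta=c_0^3-27d_0^2$, say $P=\Delta P_1$ with $P_1\in\Rcd$. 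Then $\Delta^{n-1}\varphi=P_1$, and iterating gives $\varphi=P_n\in\Rcd$.

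The only points that require a moment's care, as compared with the $(a_i,b_j)$ case, are: (i) confirming that $\Delta$ equals $c_0^3-27d_0^2$ as well as $a_0^3-27b_0^2$ — this is exactly the identity $\Delta=a_0^3-27b_0^2=c_0^3-27d_0^2$ recorded before \eqref{eq:KLMNinab}, stemming from $a_0=c_0=E_4/12$, $b_0=d_0=E_6/216$; and (ii) checking that the bigrading (weight and degree in $z_i$) forces the algebraic relation in step two to be the $\Delta$-multiple claimed — this is where one uses that $c_0^{(0)},d_0^{(0)}$ are the only degree-zero generators while the others have nonzero degree, so the relation cannot involve the remaining generators nontrivially once the $\bbZ_{\ge 0}$-grading by degree is taken into account. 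I do not anticipate a genuine obstacle: the argument is structurally identical to Lemma~\ref{lem:RGinRab}, and every input (Lemma~\ref{lem:RtriDinv}, Lemma~\ref{lem:mqexp}, \eqref{eq:KLMNincd}, Propositions~\ref{prop:cdprop} and~\ref{prop:abcdq0indep}) is already in place. The proof therefore reduces to the single sentence ``the same as the proof of Lemma~\ref{lem:RGinRab}, with $(a_i,b_j,E_4)$ replaced by $(c_k,d_l,E_6)$ and \eqref{eq:KLMNinab}, \eqref{eq:abq0form} replaced by \eqref{eq:KLMNincd}, \eqref{eq:cdq0form}.''
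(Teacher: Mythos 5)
Your proposal is correct and is exactly what the paper intends: the paper proves this lemma with the single sentence ``In the same way, we can prove the following lemma,'' i.e.\ by repeating the proof of Lemma~\ref{lem:RGinRab} with $(a_i,b_j,E_4)$ replaced by $(c_k,d_l,E_6)$ and \eqref{eq:KLMNinab}, \eqref{eq:abq0form} replaced by \eqref{eq:KLMNincd}, \eqref{eq:cdq0form}, which is precisely the substitution you carry out. Your two points of care (the identity $\Delta=c_0^3-27d_0^2$ and the bigrading argument forcing the relation to be a multiple of $\bigl(c_0^{(0)}\bigr)^3-27\bigl(d_0^{(0)}\bigr)^2$) are the right ones and are covered by the cited propositions, so nothing is missing.
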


We are now ready to state the main theorem in this subsection.
\begin{thm}\label{thm:RcaptR}
The ring $\Rtri_{*,*}$ of $D_4$ triality invariants
is the intersection of two polynomial rings $\Rab$ and $\Rcd$
\begin{align}
\Rtri_{*,*}=\Rab\cap\Rcd.
\end{align}
In other words, the following two conditions
for a function $\varphi$ are equivalent:
\renewcommand{\theenumi}{\roman{enumi}}
\renewcommand{\labelenumi}{$\mathrm{(\theenumi})$}
~\\[-3.5ex]
\begin{enumerate}
\item $\varphi$ is a $D_4$ triality invariant.

\vspace{-1ex}
\item $\varphi$ is
a polynomial of $a_i,b_j$ and also a polynomial of $c_i,d_j$.
\end{enumerate}
\end{thm}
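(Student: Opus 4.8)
The inclusion $\Rtri_{*,*}\subseteq\Rab\cap\Rcd$ is immediate from Lemma~\ref{lem:RGinRab} and its $\Rcd$-counterpart, so the content is the reverse inclusion. The plan is to take $\varphi\in\Rab\cap\Rcd$ and verify directly that $\varphi$ satisfies the three conditions of Definition~\ref{def:triinv}, reducing the cusp condition (iii) to Lemma~\ref{lem:mqexp}. First, collecting in a polynomial representation of $\varphi$ in the $a_i,b_j$ (resp.\ the $c_i,d_j$) the monomials of each fixed bidegree, and using that each $a_i,b_j$ (resp.\ $c_i,d_j$) is bihomogeneous by Proposition~\ref{prop:abprop}(1) (resp.\ Proposition~\ref{prop:cdprop}(1)), one sees that the bihomogeneous components of $\varphi$ again lie in $\Rab\cap\Rcd$; hence I may assume $\varphi$ is bihomogeneous of some weight $k$ and degree $m$, which is property (i).

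The crux is holomorphy. Writing $\varphi=P(a_0,a_2,b_0,b_1,b_2,b_3)$, the explicit formulas \eqref{eq:abform} show that each generator is holomorphic on $\bbH\times\bbC^4$ away from the divisor $\{E_4=0\}$, hence so is $\varphi$; writing $\varphi$ instead as a polynomial in $c_0,c_1,c_2,d_0,d_2,d_3$ and using \eqref{eq:cdform}, $\varphi$ is holomorphic away from $\{E_6=0\}$. Since $\Delta=(E_4^3-E_6^2)/1728$ vanishes nowhere on $\bbH$, the two divisors are disjoint, so $\varphi$ is holomorphic on all of $\bbH\times\bbC^4$. Property (ii) follows at once: by Proposition~\ref{prop:abprop}(1),(2) each $a_i,b_j$ transforms as a meromorphic $D_4$ triality invariant of its prescribed weight and degree, so the bihomogeneous polynomial $\varphi$ transforms as a weight-$k$ meromorphic $\Gtri$-semiinvariant wherever it is defined, and holomorphy extends this transformation law to the whole domain.

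It remains to check (iii). By Lemma~\ref{lem:mqexp} it is enough to show that $\varphi$ is a \emph{weak} $D_4$ triality invariant and that $\Sop\varphi$ is a formal power series in $q$. For the former: by Proposition~\ref{prop:abprop}(3) each $a_i$ equals $\Sop^{-1}$ of a formal power series in $q$ over $\RD_*$, and since $\Sop^{-1}$ sends a monomial $I_2^{a}I_4^{b}I_6^{c}\tI_4^{d}$ to $q^{a+b+c+d/2}I_2^{a}I_4^{b}I_6^{c}\tI_4^{d}$ with a non-negative exponent, each $a_i$ --- and likewise each $b_j$, hence $\varphi$ --- has a Fourier expansion with no negative powers of $q^{1/2}$; combined with holomorphy and (i),~(ii) this makes $\varphi$ a weak $D_4$ triality invariant. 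For the latter: $\Sop$ is a ring automorphism of the formal Laurent series in $q^{1/2}$ over $\RD_*$, so $\Sop\varphi=P(\Sop a_0,\dots,\Sop b_3)$ is a polynomial in formal power series in $q$ by Proposition~\ref{prop:abprop}(3), hence itself one. Lemma~\ref{lem:mqexp} then gives $\varphi\in\Rtri_{*,*}$. The one genuinely nontrivial step is the holomorphy argument: membership in \emph{both} polynomial rings is precisely what removes the apparent poles along $\{E_4=0\}$ and $\{E_6=0\}$, and this is the only place where the intersection, rather than $\Rab$ alone, is used.
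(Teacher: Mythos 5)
Your proposal is correct and follows essentially the same route as the paper: the forward inclusion from Lemma~\ref{lem:RGinRab} and its $\Rcd$-analogue, and the reverse inclusion by observing that the only obstruction is holomorphy, which membership in $\Rab$ rules out along $\{E_6=0\}$ and membership in $\Rcd$ rules out along $\{E_4=0\}$, these divisors being disjoint since $\Delta$ is nonvanishing on $\bbH$. The paper's proof is terser (it cites Propositions~\ref{prop:abprop}(2) and \ref{prop:cdprop}(2) and leaves the reduction to bihomogeneous components, the transformation law, and the cusp-condition check via Lemma~\ref{lem:mqexp} implicit), but your added detail fills in exactly what those citations are meant to supply.
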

\begin{proof}
By the last two lemmas it is clear that
$\Rtri_{*,*}\subset\Rab\cap\Rcd$.
Therefore, suppose that $\varphi\in\Rab\cap\Rcd$.
Then $\varphi$ satisfies all the conditions of being
a $D_4$ triality invariant, except that it could have poles
at the zero points of $E_4$ and $E_6$.
However, by Proposition~\ref{prop:abprop} (2),
$\varphi$ is holomorphic at the zero points of $E_6$.
Similarly, by Proposition~\ref{prop:cdprop} (2),
$\varphi$ is holomorphic at the zero points of $E_4$.
\end{proof}

As an application of Theorem~\ref{thm:RcaptR}
we formulate an algorithm for constructing
all $D_4$ triality invariants of given weight $k$ and degree $m$.
\begin{algo}\label{algo:D4}
\renewcommand{\theenumi}{\arabic{enumi}}
\renewcommand{\labelenumi}{(\theenumi)}
~\\[-3.5ex]
\begin{enumerate}
\item Take the most general polynomial in $a_i,b_j$ of
weight $k$ and degree $m$ as our ansatz. More specifically,
the ansatz is constructed as the most general linear combination
of all monomials of $a_i,b_j$
appearing in the coefficient of $x^ky^m$ in the generating series
\begin{align}
\frac{1}{\prod_{i=0,2}(1-x^{4+2i}y^{2i}a_i)
         \prod_{j=0}^3(1-x^{6+2j}y^{2j}b_j)}.
\end{align}

\item Substitute \eqref{eq:abincd} into the ansatz and solve 
the linear equations among undetermined coefficients so that
all negative powers of $c_0$ vanish.

\item Substitute the general solution back into the original ansatz.
This gives the most general linear combination of $D_4$ triality
invariants of weight $k$ and degree $m$.
\end{enumerate}
\end{algo}
%

\section{Classical invariant theory and isomorphism}\label{sec:iso}

In this section we prove the main theorem of this paper.
We give only a minimal description
since the discussion below is basically the same as
that in \cite[Section~4]{Sakai:2024vby}.

\subsection{Classical invariant theory}

In this subsection we present some definitions and
useful results of classical invariant theory.

Let $f_k\ (k=1,\ldots,r)$ be binary forms of degree $n_k$.
We write them as
\begin{align}
f_k(u,v)=\sum_{i=0}^{n_k}\alpha_{k,i} u^{n_k-i}v^i
\end{align}
with $\alpha_{k,i}\in\bbC$, $(u,v)\in\bbC^2$.
Define the action of $\grp{SL}_2(\bbC)$ on the variables $u,v$ by
\begin{align}
(u',v')=(t_{11}u+t_{12}v,t_{21}u+t_{22}v),\qquad
T=\begin{pmatrix} t_{11}&t_{12}\\ t_{21}&t_{22}\end{pmatrix}
\in\grp{SL}_2(\bbC).
\end{align}
We let the symbol $({}')$ denote the action of $T$:
$(u',v')=T(u,v),\,\alpha'_i=T\alpha_i$, etc.
The action of $T$ on $f_k$
and its coefficients $\alpha_{k,i}$ is then defined by
\begin{align}
f'_k(u,v)=\sum_{i=0}^{n_k}\alpha'_{k,i} u^{{n_k}-i}v^i:=f_k(u',v').
\end{align}
\begin{dfn}[Invariants of binary forms]
A joint invariant of $f_1,\ldots,f_r$ is a homogeneous
polynomial $\Phi$ in the coefficients $\alpha_{k,i}$ of $f_k$
that satisfies
\begin{align}
\Phi(\alpha'_{k,i})=\Phi(\alpha_{k,i})
\end{align}
under the action of all $T\in\grp{SL}_2(\bbC)$.
The degree of $\Phi$ refers to the standard degree of
homogeneity in $\alpha_{k,i}$.
\end{dfn}
\begin{dfn}[Covariants of binary forms]\label{def:cov}
A joint covariant of $f_1,\ldots,f_r$ of degree $d$ and
order $\omega$ is a polynomial $\Psi(\alpha_{k,i};u,v)$ that
satisfies the following conditions:
\renewcommand{\theenumi}{\roman{enumi}}
\renewcommand{\labelenumi}{(\theenumi)}
~\\[-3ex]
\begin{enumerate}
\item $\Psi$ is homogeneous of degree $d$ in the coefficients
$\alpha_{k,i}$.

\item $\Psi$ is homogeneous of degree $\omega$ in the variables
$u,v$.

\item $\Psi$ satisfies
\begin{align}
\Psi(\alpha'_{k,i};u,v)=\Psi(\alpha_{k,i};u',v')
\end{align}
under the action of all $T\in\grp{SL}_2(\bbC)$.
\end{enumerate}
\end{dfn}
\begin{dfn}[Semiinvariants of binary forms]\label{def:semi}
A joint semiinvariant of $f_1,\ldots,f_r$ of degree $d$ and
order $\omega$ is a polynomial $\Phi(\alpha_{k,i})$
that satisfies the following conditions:
\renewcommand{\theenumi}{\roman{enumi}}
\renewcommand{\labelenumi}{(\theenumi)}
~\\[-7ex]
\begin{enumerate}
\item $\Phi$ is homogeneous of degree $d$ in the coefficients
$\alpha_{k,i}$.

\item $\Phi$ satisfies
\begin{align}
\Phi(\alpha'_{k,i})=\Phi(\alpha_{k,i})
\end{align}
under the action of
all $T=\begin{pmatrix}1&\kappa\\ 0&1\end{pmatrix}\in\grp{SL}_2(\bbC)$.

\item $\Phi$ transforms as
\begin{align}
\Phi(\alpha'_{k,i})=\lambda^\omega\Phi(\alpha_{k,i})
\end{align}
under the action of
all
$T=\begin{pmatrix}\lambda&0\\ 0&\lambda^{-1}\end{pmatrix}\in
 \grp{SL}_2(\bbC)$.
\end{enumerate}
\end{dfn}
We often omit `joint' when it does not cause any confusion.

Let $V_n$ denote the $\bbC$-vector space of binary forms of degree $n$.
The ring of invariants, covariants, semiinvariants of $f_1,\ldots,f_r$
are expressed respectively as
\begin{align}
\bbC[V_{n_1}\oplus\cdots\oplus V_{n_r}]^{\grp{SL}_2(\bbC)},\quad
\bbC[V_{n_1}\oplus\cdots\oplus V_{n_r}\oplus\bbC^2]^{\grp{SL}_2(\bbC)},
\quad
\bbC[V_{n_1}\oplus\cdots\oplus V_{n_r}]^{\grp{U}_2(\bbC)}.
\end{align}
Here, $\grp{U}_2(\bbC)$ is the group of
upper triangular unipotent matrices
\begin{align}
\grp{U}_2(\bbC)
 :=\left\{\begin{pmatrix}1&\kappa\\ 0&1\end{pmatrix}
 \,\middle|\,\kappa\in\bbC\right\}.
\end{align}

A covariant $\Psi$ of order $\omega$ can be expanded as
\begin{align}
\Psi(\alpha_{k,i};u,v)
=\Psi_0(\alpha_{k,i}) u^\omega
 +\Psi_1(\alpha_{k,i}) u^{\omega-1}v
 +\cdots +\Psi_\omega(\alpha_{k,i}) v^\omega.
\end{align}
It is easy to see that the leading coefficient
\begin{align}
\Psi_0(\alpha_{k,i})=\Psi(\alpha_{k,i};1,0)
\end{align}
is a semiinvariant. The converse is also true:
\begin{thm}[{Roberts isomorphism
            \cite{Roberts:1861}}]\label{thm:Roberts}
There is an isomorphism
\begin{align}
\begin{aligned}
\bbC[V_{n_1}\oplus\cdots\oplus V_{n_r}
     \oplus\bbC^2]^{\grp{SL}_2(\bbC)}
&\ \stackrel\sim\longrightarrow\ 
\bbC[V_{n_1}\oplus\cdots\oplus V_{n_r}]^{\grp{U}_2(\bbC)};\\
\Psi(\alpha_{k,i};u,v)&\ \ \mapsto\ \
\Psi_0(\alpha_{k,i}).
\end{aligned}
\end{align}
It maps a covariant to a semiinvariant
of the same degree and order.
\end{thm}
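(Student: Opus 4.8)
The plan is to establish the Roberts isomorphism by exhibiting an explicit inverse to the stated map $\Psi\mapsto\Psi_0$. The forward map sends a covariant to its leading coefficient $\Psi_0(\alpha_{k,i})=\Psi(\alpha_{k,i};1,0)$, and we have already observed that $\Psi_0$ is a semiinvariant of the same degree and the same order $\omega$. What remains is to show this map is bijective on the level of rings. Injectivity is the easier half: if $\Psi$ is a covariant of order $\omega$ with $\Psi_0=0$, I would apply a generic lower-triangular unipotent matrix $\left(\begin{smallmatrix}1&0\\ \kappa&1\end{smallmatrix}\right)$ (or equivalently use the covariance relation with $(u,v)$ replaced by $(u',v')$) to deduce that every coefficient $\Psi_j$ vanishes as well; concretely, $\Psi(\alpha_{k,i};u,v)$ is determined by its specialization at $v=0$ via the group action since $\grp{SL}_2(\bbC)$ acts transitively enough on the projective line. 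Thus $\Psi=0$.

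The substantive part is surjectivity, i.e.\ reconstructing a covariant from a given semiinvariant $\Phi$ of degree $d$ and order $\omega$. The classical device is the \emph{Cayley $\Omega$-process} / restitution: given the semiinvariant $\Phi(\alpha_{k,i})$, one forms
\begin{align}
\Psi(\alpha_{k,i};u,v)
 :=\sum_{p=0}^{\omega}\frac{1}{p!}\,v^p\,(D^p\Phi)(\alpha_{k,i})\,u^{\omega-p},
\end{align}
where $D$ is the polarization operator dual to the unipotent action, explicitly $D=\sum_k\sum_i (n_k-i+1)\,\alpha_{k,i-1}\,\partial/\partial\alpha_{k,i}$, the infinitesimal generator of the $\grp{U}_2(\bbC)$-action on the $\alpha_{k,i}$. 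One checks that $\Psi_0=\Phi$ by construction, that $\Psi$ has the correct bidegree $(d,\omega)$, and — the key computation — that $\Psi$ transforms correctly under all of $\grp{SL}_2(\bbC)$, not merely under the unipotent subgroup. The latter follows from the fact that $\grp{SL}_2(\bbC)$ is generated by $\grp{U}_2(\bbC)$, the diagonal torus, and the Weyl element $w=\left(\begin{smallmatrix}0&-1\\ 1&0\end{smallmatrix}\right)$: covariance under $\grp{U}_2(\bbC)$ and the torus is essentially built in from the semiinvariance hypothesis (ii) and (iii) of Definition~\ref{def:semi} together with the definition of $D$, and one verifies the single remaining relation $w\cdot\Psi=\Psi$ by a direct manipulation, using that $\Phi$ has order exactly $\omega$ (so that $D^{\omega+1}\Phi=0$, which makes the "lowest weight" and "highest weight" descriptions consistent). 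Finally, one notes that the two maps are mutually inverse ring homomorphisms: they respect addition and multiplication because the leading-coefficient map is multiplicative on covariants (the product of two covariants is a covariant whose leading coefficient is the product of the leading coefficients) and the restitution process is its two-sided inverse.

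I expect the main obstacle to be the verification that the restituted $\Psi$ is genuinely $\grp{SL}_2(\bbC)$-covariant, i.e.\ the closure of the $w$-relation. This is where one must use the hypothesis that $\Phi$ is a semiinvariant of a \emph{specific} order $\omega$ (the torus weight), as opposed to an arbitrary polynomial annihilated by $D^{\omega+1}$; the point is that for a true semiinvariant the $\mathfrak{sl}_2$-representation generated by $\Phi$ under the raising operator dual to $D$ is the irreducible $(\omega+1)$-dimensional one, and it is this irreducibility — equivalently, the compatibility of the highest-weight and lowest-weight vectors — that forces the correct behavior under $w$. An alternative, perhaps cleaner, route that I would at least mention is to quote this as the classical theorem of Roberts \cite{Roberts:1861} and sketch only the $\Omega$-process construction, since for our purposes all we ultimately need downstream is the \emph{existence} of the isomorphism together with the fact that it preserves bidegree, both of which are standard.
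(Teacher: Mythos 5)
The paper does not actually prove this theorem: it cites Roberts and merely records the explicit inverse map \eqref{eq:Robinv}, namely $\Psi(\alpha_{k,i};u,v)=u^\omega\Psi_0(\halpha_{k,i}(u,v))$ with $\halpha_{k,i}(u,v)=\sum_{j\ge i}\alpha_{k,j}\binom{j}{i}(v/u)^{j-i}$. So your fallback option (quote the classical result and exhibit the inverse) is exactly what the paper does, and your attempt to give a real proof goes beyond it. Your overall strategy is the standard and correct one: injectivity because $\Psi(\alpha';1,0)=\Psi(\alpha;t_{11},t_{21})$ and $(t_{11},t_{21})$ sweeps out all nonzero vectors, and surjectivity by restitution along the opposite unipotent combined with $\mathfrak{sl}_2$ highest-weight theory. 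The injectivity half is fine as written.

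There is, however, a concrete error in your restitution formula. In the paper's conventions ($\alpha'_{k,i}$ defined by $f'_k(u,v)=f_k(u+\kappa v,v)$ for $T=\bigl(\begin{smallmatrix}1&\kappa\\ 0&1\end{smallmatrix}\bigr)$), the operator you wrote, $D=\sum_{k,i}(n_k-i+1)\,\alpha_{k,i-1}\,\partial/\partial\alpha_{k,i}$, \emph{is} the infinitesimal generator of the $\grp{U}_2(\bbC)$-action on the coefficients --- and a semiinvariant satisfies $D\Phi=0$ by condition (ii) of Definition~\ref{def:semi}. Hence your displayed $\Psi$ collapses to $\Phi(\alpha_{k,i})\,u^\omega$, which is not a covariant unless $\omega=0$. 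What the construction requires is the generator of the \emph{opposite} (lower-triangular) unipotent, $\sum_{k,i}(i+1)\,\alpha_{k,i+1}\,\partial/\partial\alpha_{k,i}$; exponentiating that action with parameter $v/u$ and multiplying by $u^\omega$ reproduces precisely the paper's formula \eqref{eq:Robinv}. With this replacement the rest of your argument goes through: $\Phi$ is then a highest-weight vector of weight $\omega$ for the resulting $\mathfrak{sl}_2$-triple, the $(\omega+1)$st power of the lowering operator annihilates it (so $\Psi$ is a genuine polynomial of order $\omega$ in $(u,v)$), and the remaining Weyl-element relation closes by irreducibility, as you indicate.
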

The inverse mapping is also easily constructed.
Given a semiinvariant $\Psi_0$ of order $\omega$,
the corresponding covariant $\Psi$ is given by
\begin{align}\label{eq:Robinv}
\Psi(\alpha_{k,i};u,v)
=u^\omega\Psi_0(\halpha_{k,i}(u,v))
\end{align}
with
\begin{align}
\halpha_{k,i}(u,v)
=\sum_{j=i}^{n_k}\alpha_{k,j}
 \begin{pmatrix}j\\[1ex] i\end{pmatrix}
 \left(\frac{v}{u}\right)^{j-i}.
\end{align}

For later use, let us define functions $\gamma_{k,i}^{(m)}$
of $\alpha_{l,j}$ by the relations
\begin{align}
\sum_{i=0}^{n_k}\gamma_{k,i}^{(m)}u^{n_k-i}
=f_k\left(u-\frac{\alpha_{m,1}}{n_m\alpha_{m,0}},1\right).
\label{eq:alkidef}
\end{align}
Explicitly, $\gamma_{k,i}^{(m)}$ are given by
\begin{align}
\gamma_{k,i}^{(m)}
=\sum_{j=0}^i\alpha_{k,j}
 \begin{pmatrix} n_i-j\\[1ex] n_i-i\end{pmatrix}
 \left(-\frac{\alpha_{m,1}}{n_m\alpha_{m,0}}\right)^{i-j}.
\end{align}
Note that by construction we have $\gamma_{k,1}^{(k)}=0$.
The following fact is well known in classical invariant theory
(see e.g.~\cite[Section~8.1.7]{Glenn:1915}).
\begin{prop}\label{prop:semiex}
The polynomials $\alpha_{k,0}^{i-1}\gamma_{k,i}^{(k)}$ are
semiinvariants of $f_k$. The polynomials
$\alpha_{m,0}^i\gamma_{k,i}^{(m)}\ (k\ne m)$ are joint
semiinvariants of $f_k$ and $f_m$.
\end{prop}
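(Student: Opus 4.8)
The plan is to verify the defining conditions of a semiinvariant (Definition~\ref{def:semi}) directly for each of the two families, exploiting the translation structure built into the $\gamma_{k,i}^{(m)}$.

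First I would check the behaviour under the diagonal torus $T=\begin{pmatrix}\lambda&0\\ 0&\lambda^{-1}\end{pmatrix}$. Under this action $\alpha_{k,j}\mapsto\lambda^{n_k-2j}\alpha_{k,j}$, so every monomial in the explicit formula for $\gamma_{k,i}^{(m)}$ scales the same way; in particular $\alpha_{m,1}/(n_m\alpha_{m,0})$ scales by $\lambda^{-2}$, matching the factor $(v/u)$ it replaces. A short bookkeeping of exponents shows $\gamma_{k,i}^{(m)}\mapsto\lambda^{n_k-2i}\gamma_{k,i}^{(m)}$, and then $\alpha_{k,0}^{i-1}\gamma_{k,i}^{(k)}$ and $\alpha_{m,0}^{i}\gamma_{k,i}^{(m)}$ pick up definite powers $\lambda^{(i-1)n_k+n_k-2i}$ and $\lambda^{in_m+n_k-2i}$ respectively, which identifies their order $\omega$. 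This step is routine degree-counting and I would present only the exponent tally.

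The substantive step is invariance under the unipotent subgroup $T=\begin{pmatrix}1&\kappa\\ 0&1\end{pmatrix}$, i.e.\ $u\mapsto u+\kappa v$, $v\mapsto v$. The key observation is that the generating identity \eqref{eq:alkidef} is engineered so that $\sum_i\gamma_{k,i}^{(m)}u^{n_k-i}=f_k\bigl(u-\tfrac{\alpha_{m,1}}{n_m\alpha_{m,0}},1\bigr)$ is the form $f_k$ translated to kill the degree-$(n_k-1)$ term of $f_m$ (when $k=m$) or of the auxiliary form $f_m$. Under the unipotent action one computes $\alpha'_{m,0}=\alpha_{m,0}$ and $\alpha'_{m,1}=\alpha_{m,1}+n_m\kappa\alpha_{m,0}$, so the shift parameter transforms as $\tfrac{\alpha'_{m,1}}{n_m\alpha'_{m,0}}=\tfrac{\alpha_{m,1}}{n_m\alpha_{m,0}}+\kappa$. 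Hence
\begin{align}
\sum_i\gamma_{k,i}^{(m)\prime}u^{n_k-i}
 =f'_k\Bigl(u-\tfrac{\alpha_{m,1}}{n_m\alpha_{m,0}}-\kappa,1\Bigr)
 =f_k\Bigl(u-\tfrac{\alpha_{m,1}}{n_m\alpha_{m,0}},1\Bigr)
 =\sum_i\gamma_{k,i}^{(m)}u^{n_k-i},
\end{align}
using $f'_k(u,v)=f_k(u+\kappa v,v)$ and evaluating at $v=1$; comparing coefficients of $u^{n_k-i}$ gives $\gamma_{k,i}^{(m)\prime}=\gamma_{k,i}^{(m)}$ for all $i$. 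Since $\alpha_{k,0}$ (resp.\ $\alpha_{m,0}$) is itself unipotent-invariant, the products $\alpha_{k,0}^{i-1}\gamma_{k,i}^{(k)}$ and $\alpha_{m,0}^{i}\gamma_{k,i}^{(m)}$ are unipotent-invariant, establishing condition (ii) of Definition~\ref{def:semi}.

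The main obstacle — really the only place one must be careful — is the $k=m$ case, where the quantity $\gamma_{k,1}^{(k)}=0$ identically and one must make sure the prefactor $\alpha_{k,0}^{i-1}$ (rather than $\alpha_{k,0}^{i}$) is the right power to clear the single pole in $\alpha_{k,0}$ coming from the shift $\alpha_{k,1}/(n_k\alpha_{k,0})$ raised to the $(i-j)$-th power with $j\ge 1$; a glance at the explicit formula shows the term with $j=0$ contributes $\alpha_{k,1}^{i}/\alpha_{k,0}^{i}$, so one factor of $\alpha_{k,0}$ suffices to make $\alpha_{k,0}^{i-1}\gamma_{k,i}^{(k)}$ polynomial, whereas in the $k\ne m$ case the denominator involves $\alpha_{m,0}^{i}$ and one needs the full power $\alpha_{m,0}^{i}$. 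Homogeneity in the $\alpha$'s (condition (i)) is immediate from the explicit formula. Together these three checks, plus the torus computation above, give the claim; I would remark that this is the standard fact recorded in \cite[Section~8.1.7]{Glenn:1915} and keep the write-up brief.
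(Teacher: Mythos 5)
Your proof is correct. The paper gives no argument of its own here---it defers to \cite{Sakai:2024vby} and the classical reference \cite{Glenn:1915}---so there is nothing to compare line by line; what you have written is exactly the elementary verification one expects: unipotent invariance follows because the shift parameter $\alpha_{m,1}/(n_m\alpha_{m,0})$ picks up $+\kappa$ under $u\mapsto u+\kappa v$, so the translated form $f_k\bigl(u-\tfrac{\alpha_{m,1}}{n_m\alpha_{m,0}},1\bigr)$, and hence each $\gamma_{k,i}^{(m)}$, is literally unchanged, while the torus weight is a routine exponent count from $\alpha'_{k,i}=\lambda^{n_k-2i}\alpha_{k,i}$ (this is precisely the scaling recorded in \eqref{eq:alorder}). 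One bookkeeping correction to your polynomiality discussion in the $k=m$ case: the $j=0$ term of $\gamma_{k,i}^{(k)}$ is $\alpha_{k,0}\binom{n_k}{n_k-i}\bigl(-\tfrac{\alpha_{k,1}}{n_k\alpha_{k,0}}\bigr)^{i}\propto\alpha_{k,1}^{i}\,\alpha_{k,0}^{1-i}$, not $\alpha_{k,1}^{i}/\alpha_{k,0}^{i}$; it is the extra factor of $\alpha_{k,0}$ supplied by the coefficient $\alpha_{k,j}$ at $j=0$ that makes the prefactor $\alpha_{k,0}^{i-1}$ (rather than $\alpha_{k,0}^{i}$) sufficient. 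As literally stated, your claimed contribution would need the power $i$ to clear. This is a one-line slip, not a gap; the conclusion and the rest of the argument stand.
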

\begin{proof}
An elementary proof is given in the author's previous work
\cite{Sakai:2024vby}.
\end{proof}
Note that $\alpha_{k,i}$ and $\gamma_{k,i}^{(m)}$
scale under the transformation $(u',v')=(\lambda u,\lambda^{-1}v)$
as
\begin{align}
\alpha'_{k,i}=\lambda^{n_k-2i}\alpha_{k,i},\qquad
{\gamma'}_{k,i}^{(m)}=\lambda^{n_k-2i}\gamma_{k,i}^{(m)},
\label{eq:alorder}
\end{align}
which we will use later.

\subsection{Trigrading}

In this subsection we introduce a trigrading to
the ring $\Rtri_{*,*}$ of $D_4$ triality invariants.
We will need it in constructing our isomorphism
in the next subsection.

Let $f$ be a binary quadratic and $g$ a binary cubic
\begin{align}
f=\sum_{i=0}^2\alpha_iu^{2-i}v^i,\qquad
g=\sum_{i=0}^3\beta_iu^{3-i}v^i.
\end{align}
We are interested in joint covariants $\Psi(\alpha_i,\beta_j;u,v)$
and joint semiinvariants $\Phi(\alpha_i,\beta_j)$ of $f$ and $g$.
The rings of them are denoted respectively by
\begin{align}
\bbC[V_2\oplus V_3\oplus\bbC^2]^{\grp{SL_2(\bbC)}},\qquad
\bbC[V_2\oplus V_3]^{\grp{U}_2(\bbC)}.
\end{align}
Since we always deal with $f$ and $g$ of fixed degrees,
by abuse of notation we often omit the variable $v$
and write them as polynomials of $u$
\begin{align}
f(u,1)=f(u),\qquad g(u,1)=g(u).
\end{align}

Let us introduce the notion of refined degrees.
\begin{dfn}\label{def:degalbe}
We say that $\Xi\in
 \bbC[\alpha_0,\alpha_1,\alpha_2,\beta_0,\ldots,\beta_3,
 \alpha_0^{-1},\beta_0^{-1},u,v]$
is of degrees $(d_\alpha,d_\beta)$ if $\Xi$ is homogeneous of
degree $d_\alpha$ in $\alpha_i$ and homogeneous of
degree $d_\beta$ in $\beta_i$.
\end{dfn}
\begin{rem}
A covariant (semiinvariant) of degrees $(d_\alpha,d_\beta)$ is
of degree $d=d_\alpha+d_\beta$ in the sense of 
Definition~\ref{def:cov} (Definition~\ref{def:semi}).
\end{rem}
The ring $\bbC[V_2\oplus V_3\oplus\bbC^2]^{\grp{SL_2(\bbC)}}$ of
covariants and the ring $\bbC[V_2\oplus V_3]^{\grp{U}_2(\bbC)}$ of
semiinvariants admit a trigrading, graded by
degrees $d_\alpha,d_\beta$ and order $\omega$.

Similar refined degrees can be introduced
for (weak) $D_4$ triality invariants:
\begin{dfn}\label{def:refdegab}
We say that
$\xi\in\bbC[a_0,a_2,b_0,b_1,b_2,b_3,a_0^{-1},b_0^{-1}]$ is of
degrees $(d_a,d_b)$ if $\xi$ is homogeneous of degree $d_a$ in $a_i$
and homogeneous of degree $d_b$ in $b_j$.
\end{dfn}
In the same way,
we can introduce refined degrees $(d_c,d_d)$ in terms of $c_i,d_j$ to
\begin{align}
 \bbC[c_0,c_1,c_2,d_0,d_2,d_3,c_0^{-1},d_0^{-1}]
=\bbC[a_0,a_2,b_0,b_1,b_2,b_3,a_0^{-1},b_0^{-1}].
\label{eq:abcdring}
\end{align}
\begin{prop}
The degrees $(d_c,d_d)$ are identical with $(d_a,d_b)$.
\end{prop}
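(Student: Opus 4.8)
The plan is to show that the transformation relating the two sets of generators, given in \eqref{eq:abincd}, preserves the refined bidegree in the sense that the monomial $c_j$ carries exactly one unit of the $d_a$-grading when $j\le 2$ and one unit of the $d_b$-grading when it comes from a $d_i$, and vice versa. Concretely, I would first observe that the change of variables \eqref{eq:abincd} is \emph{triangular}: $a_i$ is a $\bbC$-linear combination of $c_0,\ldots,c_i$ with coefficients that are monomials in $c_1/c_0$, and similarly $b_i$ is a linear combination of $d_0,\ldots,d_i$ with coefficients that are monomials in $c_1/c_0$. The key arithmetic point is that each coefficient $(-c_1/(2c_0))^{i-j}$ contributes zero net $c$-degree but does shift the $d$-grading by nothing, since $c_1$ and $c_0$ are both pure $c$'s; hence $a_i$ is homogeneous of degrees $(d_c,d_d)=(1,0)$ and $b_i$ is homogeneous of degrees $(0,1)$. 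Because this holds generator by generator, it extends multiplicatively to the whole ring \eqref{eq:abcdring}, so the $(d_a,d_b)$-grading agrees with the $(d_c,d_d)$-grading on every monomial, and therefore on every element.

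The steps in order: (1) Recall from \eqref{eq:abform}, \eqref{eq:cdform} that $a_0=c_0=E_4/12$ and $b_0=d_0=E_6/216$, so the two rings of \eqref{eq:abcdring} genuinely coincide and the localizations $a_0^{-1}=c_0^{-1}$, $b_0^{-1}=d_0^{-1}$ match. (2) Read off from \eqref{eq:abincd} that $a_i=\sum_{j=0}^{i}c_j\binom{2-j}{2-i}(-c_1/(2c_0))^{i-j}$; since every term is (a constant)$\times c_j\times c_1^{i-j}/c_0^{i-j}$, each term has $d_c$-degree $1+(i-j)-(i-j)=1$ and $d_d$-degree $0$. Hence $a_i$ lies in the $(d_c,d_d)=(1,0)$ graded piece. (3) Identically, from the second relation in \eqref{eq:abincd}, $b_i=\sum_{j=0}^{i}d_j\binom{3-j}{3-i}(-c_1/(2c_0))^{i-j}$, so each term is (a constant)$\times d_j\times c_1^{i-j}/c_0^{i-j}$, with $d_c$-degree $0$ and $d_d$-degree $1$; thus $b_i$ lies in the $(0,1)$ graded piece. (4) Conclude that the two gradings assign the same bidegree to each of the six generators $a_0,a_2,b_0,b_1,b_2,b_3$, hence to every monomial, hence to every polynomial; therefore $(d_c,d_d)=(d_a,d_b)$ as claimed.

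I expect this proof to be essentially mechanical; the only point requiring a moment's care is the bookkeeping in step (2)--(3), namely confirming that $c_1$ itself is of $c$-bidegree $(1,0)$ (it is, since it is one of the $c_i$) and that $c_0$ is of bidegree $(1,0)$ as well, so that the ratio $c_1/c_0$ is of bidegree $(0,0)$ and the powers $(-c_1/(2c_0))^{i-j}$ do not disturb the grading. One might worry about the denominators $c_0^{i-j}$ forcing us out of the polynomial ring, but this is exactly why Definition~\ref{def:refdegab} and the isomorphism \eqref{eq:abcdring} are stated over the localized ring $\bbC[\ldots,a_0^{-1},b_0^{-1}]=\bbC[\ldots,c_0^{-1},d_0^{-1}]$, so there is no difficulty. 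Once the generator-level statement is in hand, multiplicativity of the grading under products gives the general case immediately, and there is no genuine obstacle.
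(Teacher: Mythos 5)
Your proof is correct and is exactly the argument the paper intends: the paper's own proof is the one-line remark that the claim ``is clear from the relations \eqref{eq:abincd},'' and your steps (2)--(4) simply spell out why — each $a_i$ is a combination of $c_j$ times powers of $c_1/c_0$, which is of $(d_c,d_d)$-bidegree $(0,0)$, so the generators (and hence all monomials) carry the same bidegree in both gradings.
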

\begin{proof}
It is clear from the relations \eqref{eq:abincd}
between $a_i,b_j$ and $c_k,d_l$.
\end{proof}
Introduction of these degrees to the ring $\Rtri_{*,*}$ leads
to an interesting result:
\begin{thm}\label{thm:trigrad}
$D_4$ triality invariants form a trigraded ring over $\bbC$,
graded by refined degrees $d_a,d_b$ and polynomial degree $m$
(in the sense of Definition~\ref{def:triinv}):
\begin{align}
\Rtri_{*,*}=\bigoplus_{d_a,d_b,m=0}^\infty\Rtri_{d_a,d_b,m}.
\end{align}
Every element of $\Rtri_{d_a,d_b,m}$ is of weight
\begin{align}
k=4d_a+6d_b+m.
\label{eq:graderel}
\end{align}
\end{thm}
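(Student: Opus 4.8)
The plan is to establish the direct sum decomposition and the weight formula \eqref{eq:graderel} by combining the already-proven inclusion $\Rtri_{*,*}\subset\Rab=\bbC[a_0,a_2,b_0,b_1,b_2,b_3]$ (Lemma~\ref{lem:RGinRab}) with the homogeneity properties of the generators $a_i,b_j$ recorded in Proposition~\ref{prop:abprop}~(1). First I would observe that by Lemma~\ref{lem:RGinRab} every $D_4$ triality invariant $\varphi$ is uniquely a polynomial in $a_0,a_2,b_0,b_1,b_2,b_3$; since the ring $\Rab$ is freely generated, it carries an unambiguous trigrading by $(d_a,d_b,m)$, where $d_a$ (resp. $d_b$) counts the total degree in the $a_i$'s (resp. $b_j$'s) and $m$ is the polynomial degree in the $z_i$'s. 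Restricting this trigrading to the subring $\Rtri_{*,*}$ gives the claimed decomposition $\Rtri_{*,*}=\bigoplus_{d_a,d_b,m}\Rtri_{d_a,d_b,m}$, provided one checks that each trihomogeneous component of a triality invariant is again a triality invariant.

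The key point for that last check is that the trigrading is compatible with the defining properties of $\Rtri_{*,*}$. By Proposition~\ref{prop:abprop}~(1), a monomial $a_0^{e_0}a_2^{e_2}b_0^{f_0}b_1^{f_1}b_2^{f_2}b_3^{f_3}$ has $d_a=e_0+e_2$, $d_b=f_0+f_1+f_2+f_3$, degree $m=4e_2+2(f_1+2f_2+3f_3)$ in $z_i$, and weight $k=4e_0+2e_2\cdot 1 + \ldots$; more precisely, summing the individual weights $4+2i$ for $a_i$ and $6+2j$ for $b_j$ gives $k=4d_a+6d_b+2e_2+2(f_1+2f_2+3f_3)=4d_a+6d_b+m$, since the degree contributions are exactly $2\cdot(\text{half the degree})$ matching $m$. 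Thus every trihomogeneous monomial in $\Rab$ of tridegree $(d_a,d_b,m)$ automatically has a single weight $k=4d_a+6d_b+m$, so the decomposition of a triality invariant $\varphi$ into trihomogeneous parts refines its decomposition into weight-homogeneous and degree-homogeneous parts, each of which is separately a triality invariant (the $\Gtri$-action respects weight and degree, and the cusp condition can be checked componentwise via Lemma~\ref{lem:mqexp} since $\Sop$ acts diagonally on monomials). Finally, to see that $d_a$ and $d_b$ are themselves well-defined gradings on $\Rtri_{*,*}$ rather than merely on $\Rab$, one notes that fixing $k$ and $m$ leaves $d_a$ and $d_b$ determined only up to the relation $4d_a+6d_b=k-m$; however by \eqref{eq:KLMNinab} the generator $\Delta=a_0^3-27b_0^2$ is the unique (up to scalar) trihomogeneous polynomial relation among low-tridegree pieces, which shows that $\Rab$ genuinely has a free trigrading, and the restriction to $\Rtri_{*,*}$ is therefore also trigraded by independent $d_a,d_b,m$.

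The main obstacle I anticipate is not any single computation but rather making precise the claim that decomposing a triality invariant according to $(d_a,d_b)$—as opposed to the coarser weight $k$—produces triality invariants again. Since a triality invariant is by definition holomorphic on $\bbH\times\bbC^4$ (not merely meromorphic), one must confirm that the trihomogeneous parts inherit holomorphy; this is immediate from Lemma~\ref{lem:RGinRab}, because each part is a genuine polynomial in the holomorphic functions $a_i,b_j$, but it is the step that genuinely uses the strength of that lemma and not just Proposition~\ref{prop:G2WD4ring}. Once that is in hand, the weight formula \eqref{eq:graderel} follows by the bookkeeping in the previous paragraph, completing the proof.
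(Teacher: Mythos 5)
Your overall skeleton matches the paper's: restrict the free trigrading of $\Rab=\bbC[a_0,a_2,b_0,b_1,b_2,b_3]$ to $\Rtri_{*,*}$ via Lemma~\ref{lem:RGinRab}, and read off the weight formula $k=4d_a+6d_b+m$ from the weights and degrees of the generators recorded in Proposition~\ref{prop:abprop}~(1); that bookkeeping is correct. However, the one step you yourself identify as the crux --- that the trihomogeneous components of a triality invariant are again triality invariants --- is justified by a false premise. You write that each component ``is a genuine polynomial in the holomorphic functions $a_i,b_j$'', but $a_2,b_1,b_2,b_3$ are \emph{not} holomorphic: Proposition~\ref{prop:abprop}~(2) states explicitly that they have poles at the zeros of $E_4$. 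Membership in $\Rab$ therefore does not give holomorphy, and since fixing $(k,m)$ only pins down $4d_a+6d_b=k-m$ (for instance $(d_a,d_b)=(3,0)$ and $(0,2)$ coexist in weight $m+12$), one must rule out pole cancellations between distinct $(d_a,d_b)$ components of a single invariant. Your appeal to $\Delta=a_0^3-27b_0^2$ as ``the unique trihomogeneous relation'' does not repair this: $\Rab$ is a free polynomial ring with no relations, and $a_0^3-27b_0^2$ is not trihomogeneous in any case.

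The gap is closable with tools already established before this theorem. Since the change of variables \eqref{eq:abincd} preserves the bidegree, i.e.\ $(d_c,d_d)=(d_a,d_b)$, the trihomogeneous decomposition of $\varphi\in\Rtri_{*,*}$ computed in $\Rab$ coincides with the one computed in $\Rcd$; hence each component lies in $\Rab\cap\Rcd$, which equals $\Rtri_{*,*}$ by Theorem~\ref{thm:RcaptR}. Holomorphy then comes for free (poles at zeros of $E_4$ are excluded by membership in $\Rcd$, those at zeros of $E_6$ by membership in $\Rab$), and the cusp condition follows from Proposition~\ref{prop:abprop}~(3) together with Lemma~\ref{lem:mqexp}. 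The paper's own proof is terse on exactly this point, but it does not rest on the incorrect claim that the $a_i,b_j$ are holomorphic; you should replace that sentence with the intersection argument above.
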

\begin{proof}
By Lemma~\ref{lem:RGinRab}, the gradation by degrees $(d_a,d_b)$ is
consistently introduced to the ring $\Rtri_{*,*}$: every element of
$\Rtri_{*,*}$ is expressed uniquely as a polynomial of $a_i,b_j$
and the degrees can be determined term-by-term.
The functions $a_i$ and $b_j$ are of degrees $(1,0)$ and $(0,1)$,
respectively, and have weight and polynomial degree described
in Proposition~\ref{prop:abprop} (1). The degrees $(d_a,d_b)$,
weight $k$ and polynomial degree $m$ of $a_i,b_j$ are subject
to the relation \eqref{eq:graderel}.
\end{proof}
%

\subsection{Isomorphism}

In this subsection we prove Theorem~\ref{thm:RtriandS},
or equivalently Theorem~\ref{thm:RtriandCov}, which is
the main theorem of this paper. We do this by constructing
an explicit isomorphism.

Let us define functions $\ha_i,\hb_i,\hc_i,\hd_i$
of $\alpha_j,\beta_k$ by the relations
\begin{align}
\begin{aligned}
\sum_{i=0}^2\ha_i u^{2-i}
&=\sum_{i=0}^2\alpha_i
 \left(u-\frac{\alpha_1}{2\alpha_0}\right)^{2-i},\quad&&
\sum_{i=0}^3\hb_i u^{3-i}
&=\sum_{i=0}^3\beta_i
 \left(u-\frac{\alpha_1}{2\alpha_0}\right)^{3-i},\\
\sum_{i=0}^2\hc_i \tu^{2-i}
&=\sum_{i=0}^2\alpha_i
 \left(\tu-\frac{\beta_1}{3\beta_0}\right)^{2-i},&&
\sum_{i=0}^3\hd_i \tu^{3-i}
&=\sum_{i=0}^3\beta_i
 \left(\tu-\frac{\beta_1}{3\beta_0}\right)^{3-i}.
\end{aligned}
\label{eq:habcddef}
\end{align}
Explicitly, they are given by
\begin{align}
\begin{aligned}
\ha_i&=\sum_{j=0}^i\alpha_j
 \begin{pmatrix} 2-j\\[1ex] 2-i\end{pmatrix}
 \left(-\frac{\alpha_1}{2\alpha_0}\right)^{i-j},\quad&&
\hb_i=\sum_{j=0}^i\beta_j
 \begin{pmatrix} 3-j\\[1ex] 3-i\end{pmatrix}
 \left(-\frac{\alpha_1}{2\alpha_0}\right)^{i-j},\\
\hc_i&=\sum_{j=0}^i\alpha_j
 \begin{pmatrix} 2-j\\[1ex] 2-i\end{pmatrix}
 \left(-\frac{\beta_1}{3\beta_0}\right)^{i-j},&&
\hd_i=\sum_{j=0}^i\beta_j
 \begin{pmatrix} 3-j\\[1ex] 3-i\end{pmatrix}
 \left(-\frac{\beta_1}{3\beta_0}\right)^{i-j}.
\end{aligned}
\label{eq:habcdform}
\end{align}
Note that by construction we have
\begin{align}
\ha_1=0,\quad\hd_1=0.
\label{eq:ha1hd1}
\end{align}
\begin{lem}\label{lem:abcdrel}
$\ha_i,\hb_i,\hc_i,\hd_i$ satisfy the same relations
as \eqref{eq:abincd} for $a_i,b_i,c_i,d_i$.
\end{lem}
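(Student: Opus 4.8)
The plan is to interpret the defining relations \eqref{eq:habcddef} and the relations to be proved (the hatted analogue of \eqref{eq:abincd}) as statements about translating the argument of the quadratic $f$ and the cubic $g$, and to reduce the whole lemma to a single scalar identity between the translation amounts.

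First I would rewrite \eqref{eq:habcddef} compactly. Writing $f(u)=\sum_{i=0}^2\alpha_iu^{2-i}$ and $g(u)=\sum_{i=0}^3\beta_iu^{3-i}$, the four displays there say
\begin{align}
\sum_{i=0}^2\ha_iu^{2-i}=f\Bigl(u-\tfrac{\alpha_1}{2\alpha_0}\Bigr),\qquad
\sum_{i=0}^3\hb_iu^{3-i}=g\Bigl(u-\tfrac{\alpha_1}{2\alpha_0}\Bigr),
\end{align}
\begin{align}
\sum_{i=0}^2\hc_iu^{2-i}=f\Bigl(u-\tfrac{\beta_1}{3\beta_0}\Bigr),\qquad
\sum_{i=0}^3\hd_iu^{3-i}=g\Bigl(u-\tfrac{\beta_1}{3\beta_0}\Bigr).
\end{align}
Next, the binomial expansion of $\sum_j x_j(u-t)^{n-j}$ shows that its coefficient of $u^{n-i}$ is $\sum_{j=0}^i x_j\binom{n-j}{n-i}(-t)^{i-j}$; hence the hatted analogue of \eqref{eq:abincd} is equivalent to the two polynomial identities
\begin{align}
\sum_{i=0}^2\ha_iu^{2-i}=\sum_{j=0}^2\hc_j\Bigl(u-\tfrac{\hc_1}{2\hc_0}\Bigr)^{2-j},\qquad
\sum_{i=0}^3\hb_iu^{3-i}=\sum_{j=0}^3\hd_j\Bigl(u-\tfrac{\hc_1}{2\hc_0}\Bigr)^{3-j}.
\end{align}
Using the displays above, the right-hand sides are $f\bigl(u-\tfrac{\hc_1}{2\hc_0}-\tfrac{\beta_1}{3\beta_0}\bigr)$ and $g\bigl(u-\tfrac{\hc_1}{2\hc_0}-\tfrac{\beta_1}{3\beta_0}\bigr)$, so the lemma reduces to the single identity $\tfrac{\alpha_1}{2\alpha_0}=\tfrac{\hc_1}{2\hc_0}+\tfrac{\beta_1}{3\beta_0}$.

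This identity I would verify directly from \eqref{eq:habcdform}: taking $i=0,1$ gives $\hc_0=\alpha_0$ and $\hc_1=\alpha_1-\tfrac{2\alpha_0\beta_1}{3\beta_0}$, so $\tfrac{\hc_1}{2\hc_0}=\tfrac{\alpha_1}{2\alpha_0}-\tfrac{\beta_1}{3\beta_0}$, as needed. Comparing coefficients then gives the hatted analogue of \eqref{eq:abincd}. The inverse relations, in which $-\tfrac{\hb_1}{3\hb_0}$ plays the role of $-\tfrac{b_1}{3b_0}$, follow in exactly the same way: from \eqref{eq:habcdform} one gets $\hb_0=\beta_0$ and $\hb_1=\beta_1-\tfrac{3\alpha_1\beta_0}{2\alpha_0}$, hence $\tfrac{\hb_1}{3\hb_0}=\tfrac{\beta_1}{3\beta_0}-\tfrac{\alpha_1}{2\alpha_0}$, which reverses the translation.

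The only point requiring care is the bookkeeping of the three translation amounts $\tfrac{\alpha_1}{2\alpha_0}$, $\tfrac{\beta_1}{3\beta_0}$ and $\tfrac{\hc_1}{2\hc_0}$ and the fact that they compose additively; there is no genuine obstacle. It is worth noting that the vanishing $\ha_1=0=\hd_1$ recorded in \eqref{eq:ha1hd1} mirrors the absence of an $a_1$ term in \eqref{eq:abcurve} and of a $d_1$ term in \eqref{eq:cdcurve}, so that the correspondence with \eqref{eq:abincd} is exact.
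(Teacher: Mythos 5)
Your proof is correct and follows essentially the same route as the paper: both view the hatted coefficients as translates of $f$ and $g$ and reduce the claimed relations to the fact that the translation amounts compose additively. The only cosmetic difference is that you identify the shift $\hc_1/(2\hc_0)$ (resp.\ $\hb_1/(3\hb_0)$) by computing $\hc_0,\hc_1$ (resp.\ $\hb_0,\hb_1$) explicitly, whereas the paper deduces it from the vanishing $\ha_1=0$, $\hd_1=0$ after passing to the common variable.
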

\begin{proof}
Since $u$ and $\tu$ in \eqref{eq:habcddef} are isolated
formal variables, we are allowed to rewrite them
in terms of a new variable $\breve{u}$ by
\begin{align}
 u-\frac{\alpha_1}{2\alpha_0}
=\tu-\frac{\beta_1}{3\beta_0}=\breve{u}.
\end{align}
This leads to the relations
\begin{align}
\sum_{i=0}^2\ha_i u^{2-i}=\sum_{i=0}^2\hc_i \tu^{2-i},\qquad
\sum_{i=0}^3\hb_i u^{3-i}=\sum_{i=0}^3\hd_i \tu^{3-i}.
\end{align}
Combining these with \eqref{eq:ha1hd1}, we obtain
the same relations as in \eqref{eq:abincd}.
\end{proof}
\begin{lem}\label{lem:abcdsemi}
$\alpha_0^{i-1}\ha_i,\ \alpha_0^i\hb_i,\
 \beta_0^i\hc_i,\ \beta_0^{i-1}\hd_i$
are joint semiinvariants of $f$ and $g$.
\end{lem}
\begin{proof}
This is a special case of Proposition~\ref{prop:semiex}.
\end{proof}

By the substitution
$a_i=\ha_i,b_i=\hb_i,c_i=\hc_i,d_i=\hd_i$ let us introduce
the mappings
\begin{align}
\begin{aligned}
\psi&:
\Rab=\bbC[a_0,a_2,b_0,b_1,b_2,b_3]
\to
\bbC[\alpha_0,\alpha_1,\alpha_2,
\beta_0,\beta_1,\beta_2,\beta_3,\alpha_0^{-1}],\\
\tpsi&:
\Rcd=\bbC[c_0,c_1,c_2,d_0,d_2,d_3]
\to
\bbC[\alpha_0,\alpha_1,\alpha_2,
\beta_0,\beta_1,\beta_2,\beta_3,\beta_0^{-1}].
\end{aligned}
\end{align}
It follows from the explicit expressions \eqref{eq:habcdform}
that the following lemma holds:
\begin{lem}\label{lem:psiinjec}
The mappings $\psi$ and $\tpsi$ are injective. The inverse mappings
from their images are given by the substitutions
\begin{align}
\begin{aligned}
\psi^{-1}&:\ 
\alpha_i=a_i\ (i\ne 1),\quad \alpha_1=0,\quad
\beta_j=b_j,\\
\tpsi^{-1}&:\ 
\alpha_i=c_i,\quad
\beta_j=d_j\ (j\ne 1),\quad \beta_1=0.
\end{aligned}
\end{align}
These mappings identify
the degrees $(d_a,d_b)$ with $(d_\alpha,d_\beta)$.
\end{lem}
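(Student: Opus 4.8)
The plan is to produce, for each of $\psi$ and $\tpsi$, an explicit left inverse and to check that it coincides with the substitution claimed in the statement; injectivity then follows immediately, and a left inverse of an injective map restricts to a genuine two-sided inverse on the image. For $\psi$ I would introduce the $\bbC$-algebra homomorphism $\sigma$ on $\bbC[\alpha_0,\alpha_1,\alpha_2,\beta_0,\beta_1,\beta_2,\beta_3,\alpha_0^{-1}]$ that sends $\alpha_1$ to $0$ and fixes every other generator. Specializing $\alpha_1=0$ in the defining relations \eqref{eq:habcddef} collapses the shift $u-\alpha_1/(2\alpha_0)$ to $u$, so that $\sum_i\sigma(\ha_i)u^{2-i}=\sum_i\alpha_iu^{2-i}$ and $\sum_i\sigma(\hb_i)u^{3-i}=\sum_i\beta_iu^{3-i}$; comparing coefficients (or simply inspecting \eqref{eq:habcdform}) gives $\sigma(\ha_i)=\alpha_i$ and $\sigma(\hb_j)=\beta_j$. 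Writing an arbitrary element of $\Rab$ as $P(a_0,a_2,b_0,b_1,b_2,b_3)$ with $P$ a genuine polynomial, one gets $\sigma(\psi(P))=P(\alpha_0,\alpha_2,\beta_0,\beta_1,\beta_2,\beta_3)$, which under the identification $\alpha_i\leftrightarrow a_i,\ \beta_j\leftrightarrow b_j$ is again $P$; hence this composite equals $\mathrm{id}_{\Rab}$. In particular it takes values in the polynomial ring $\Rab$ (no negative powers of $\alpha_0$ survive), so $\psi$ is injective and the restriction of $\sigma$ to $\psi(\Rab)$ is the two-sided inverse — which is exactly the substitution $\alpha_i=a_i\ (i\ne1),\ \alpha_1=0,\ \beta_j=b_j$. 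The same argument applies verbatim to $\tpsi$ after exchanging the roles of the $\alpha$'s and $\beta$'s and using the specialization $\beta_1=0$, which trivializes the shift $\tu-\beta_1/(3\beta_0)$ and yields the inverse $\alpha_i=c_i,\ \beta_j=d_j\ (j\ne1),\ \beta_1=0$.

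For the statement on degrees, I would read off from \eqref{eq:habcdform} that every monomial of $\ha_i$ is a constant times $\alpha_j(\alpha_1/\alpha_0)^{i-j}$, hence homogeneous of degree $1$ in the $\alpha_i$ and of degree $0$ in the $\beta_j$; likewise $\hb_j$ has degrees $(0,1)$, $\hc_i$ has degrees $(1,0)$, and $\hd_j$ has degrees $(0,1)$. Since $a_i,c_i$ carry refined degrees $(d_a,d_b)=(1,0)$ and $b_j,d_j$ carry $(0,1)$ (Definition~\ref{def:refdegab}, together with the proposition above asserting $(d_c,d_d)=(d_a,d_b)$), the substitutions defining $\psi$ and $\tpsi$ send the $(d_a,d_b)$-grading onto the $(d_\alpha,d_\beta)$-grading, as claimed.

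The underlying observation — that setting $\alpha_1$ (resp.\ $\beta_1$) to zero trivializes the translation in \eqref{eq:habcddef} — makes everything essentially immediate, so I do not expect a serious obstacle. The one point requiring a moment's care is that $\sigma$ is only defined after inverting $\alpha_0$ (resp.\ $\beta_0$), and one must verify, as above, that its restriction to the image of $\psi$ (resp.\ $\tpsi$) lands back in the polynomial ring $\Rab$ (resp.\ $\Rcd$) rather than merely in the localization; this is what guarantees that the left inverse is the inverse on the image and that the substitution in the statement is well defined.
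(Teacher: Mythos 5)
Your proof is correct and is exactly the argument the paper intends: the lemma is stated as following ``from the explicit expressions \eqref{eq:habcdform}'', and your observation that the specialization $\alpha_1=0$ (resp.\ $\beta_1=0$) trivializes the shift and hence provides a left inverse sending $\ha_i\mapsto\alpha_i$, $\hb_j\mapsto\beta_j$ (resp.\ $\hc_i\mapsto\alpha_i$, $\hd_j\mapsto\beta_j$) is precisely that verification, made explicit. Your added care about the localization at $\alpha_0$ (resp.\ $\beta_0$) and the degree bookkeeping via \eqref{eq:habcdform} is sound and matches the paper's conventions.
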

Recall that the ring of $D_4$ triality invariants
is described as $\Rtri_{*,*}=\Rab\cap\Rcd$
by Theorem~\ref{thm:RcaptR}.
\begin{lem}\label{lem:hpsimap}
The restrictions of $\psi$ and $\tpsi$
to $\Rtri_{*,*}=\Rab\cap\Rcd$ are identical
\begin{align}
\hpsi:=\psi\big|_{\Rtri_{*,*}}=\tpsi\big|_{\Rtri_{*,*}}
\end{align}
and define an injection
\begin{align}
\hpsi:\Rtri_{*,*}
\to\bbC[\alpha_0,\alpha_1,\alpha_2,\beta_0,\beta_1,\beta_2,\beta_3].
\end{align}
This injection
identifies the degrees $(d_a,d_b)$ with $(d_\alpha,d_\beta)$.
\end{lem}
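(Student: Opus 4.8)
The plan is to combine the two injections $\psi$ and $\tpsi$ constructed in Lemma~\ref{lem:psiinjec}, using the fact that on the overlap $\Rtri_{*,*}=\Rab\cap\Rcd$ they must coincide. First I would note that for an element $\varphi\in\Rtri_{*,*}$, Theorem~\ref{thm:RcaptR} lets us write $\varphi$ both as a polynomial $P(a_0,a_2,b_0,b_1,b_2,b_3)$ and as a polynomial $\tilde P(c_0,c_1,c_2,d_0,d_2,d_3)$. The key point is that the substitutions $a_i=\ha_i,\ b_j=\hb_j$ and $c_i=\hc_i,\ d_j=\hd_j$ are not independent: by Lemma~\ref{lem:abcdrel}, $\ha_i,\hb_j,\hc_i,\hd_j$ satisfy exactly the relations \eqref{eq:abincd} that hold among $a_i,b_j,c_i,d_j$. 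Since these relations (together with their inverse) express one set in terms of the other over the base ring, and since $\varphi$ as a function is the same whether written via $a_i,b_j$ or via $c_i,d_j$, applying the two substitutions produces the same element of the Laurent polynomial ring. Concretely, $\psi(\varphi)=P(\ha_0,\ha_2,\hb_0,\hb_1,\hb_2,\hb_3)$ and $\tpsi(\varphi)=\tilde P(\hc_0,\hc_1,\hc_2,\hd_0,\hd_2,\hd_3)$, and substituting the relations of Lemma~\ref{lem:abcdrel} into the latter turns it into the former, so $\psi(\varphi)=\tpsi(\varphi)$. This justifies the common notation $\hpsi$.

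Next I would show that $\hpsi$ actually lands in the \emph{polynomial} ring $\bbC[\alpha_0,\alpha_1,\alpha_2,\beta_0,\beta_1,\beta_2,\beta_3]$, i.e.\ that no negative powers of $\alpha_0$ or $\beta_0$ survive. From $\hpsi=\psi|_{\Rtri_{*,*}}$ we know $\hpsi(\varphi)$ lies in $\bbC[\alpha_0,\alpha_1,\alpha_2,\beta_0,\beta_1,\beta_2,\beta_3,\alpha_0^{-1}]$ — no negative powers of $\beta_0$. From $\hpsi=\tpsi|_{\Rtri_{*,*}}$ we likewise know $\hpsi(\varphi)$ lies in $\bbC[\alpha_0,\alpha_1,\alpha_2,\beta_0,\beta_1,\beta_2,\beta_3,\beta_0^{-1}]$ — no negative powers of $\alpha_0$. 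Intersecting these two subrings of the field of fractions gives the genuine polynomial ring, so $\hpsi(\varphi)$ has no negative powers of either $\alpha_0$ or $\beta_0$. Hence $\hpsi$ maps $\Rtri_{*,*}$ into $\bbC[\alpha_0,\alpha_1,\alpha_2,\beta_0,\beta_1,\beta_2,\beta_3]$.

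Finally, injectivity of $\hpsi$ is immediate: it is a restriction of the injective map $\psi$ (Lemma~\ref{lem:psiinjec}), so it is itself injective. The preservation of the refined degrees $(d_a,d_b)\leftrightarrow(d_\alpha,d_\beta)$ also follows directly from Lemma~\ref{lem:psiinjec}, since $\psi$ already identifies these gradings and $\hpsi$ is just its restriction. I expect the only real subtlety — and the step deserving the most care in writing — to be the verification that $\psi$ and $\tpsi$ genuinely agree on the overlap rather than merely being defined there: one must argue cleanly that the polynomial representations $P$ and $\tilde P$ of a single $\varphi$ are interchanged by precisely the substitution relations of Lemma~\ref{lem:abcdrel}, so that the two substitution homomorphisms produce literally the same output. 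Once that identification is in place, the containment-in-the-polynomial-ring and injectivity claims are formal consequences of the already-established lemmas.
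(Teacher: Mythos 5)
Your proposal is correct and follows essentially the same route as the paper: the agreement of $\psi$ and $\tpsi$ on the overlap is deduced from Lemma~\ref{lem:abcdrel} (the hatted quantities satisfying the same relations \eqref{eq:abincd}), and the image lands in the honest polynomial ring because it lies in the intersection of the two codomains $\bbC[\alpha_i,\beta_j,\alpha_0^{-1}]\cap\bbC[\alpha_i,\beta_j,\beta_0^{-1}]$, with injectivity and degree preservation inherited from Lemma~\ref{lem:psiinjec}. The paper's own proof is a two-sentence compression of exactly these observations, so your more detailed write-up is a faithful expansion rather than a different argument.
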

\begin{proof}
The restrictions $\psi\big|_{\Rtri_{*,*}}$ and
$\tpsi\big|_{\Rtri_{*,*}}$ are identical by Lemma~\ref{lem:abcdrel}.
The intersection of the codomains of $\psi$ and $\tpsi$ is
$\bbC[\alpha_0,\alpha_1,\alpha_2,\beta_0,\beta_1,\beta_2,\beta_3]$.
\end{proof}
We are now ready to prove the main theorem of this paper.
\begin{thm}\label{thm:RtriandS}
The injection $\hpsi$ gives an isomorphism between the ring of
$D_4$ triality invariants and the ring of joint semiinvariants
of $f$ and $g$:
\begin{align}
\begin{aligned}
\Rtri_{*,*}&\ \cong\ \bbC[V_2\oplus V_3]^{\grp{U}_2(\bbC)};\\
\hpsi:\varphi(a_i,b_j)&\ \mapsto\ \varphi(\ha_i,\hb_j),\\
\hpsi^{-1}:\Phi(a_i,b_j)&\ \mapsfrom\ \Phi(\alpha_i,\beta_j).
\end{aligned}
\end{align}
Every $D_4$ triality invariant of degrees $(d_a,d_b)$, weight $k$ 
and polynomial degree $m$ is mapped to a semiinvariant of
the same degrees $(d_a,d_b)$ and order $\omega=(k-3m)/2$.
\end{thm}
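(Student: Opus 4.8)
The plan is to establish that $\hpsi$ is an isomorphism by showing it maps $\Rtri_{*,*}=\Rab\cap\Rcd$ onto the ring of joint semiinvariants $\bbC[V_2\oplus V_3]^{\grp{U}_2(\bbC)}$, and then verify the grading correspondence. Since Lemma~\ref{lem:hpsimap} already gives us that $\hpsi$ is a well-defined injection into $\bbC[\alpha_0,\alpha_1,\alpha_2,\beta_0,\beta_1,\beta_2,\beta_3]$ that preserves the refined degrees, the real content is \emph{surjectivity onto the semiinvariant ring}: we must show $\hpsi(\Rtri_{*,*})=\bbC[V_2\oplus V_3]^{\grp{U}_2(\bbC)}$.

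For the inclusion $\hpsi(\Rtri_{*,*})\subset\bbC[V_2\oplus V_3]^{\grp{U}_2(\bbC)}$, I would argue as follows. Take $\varphi\in\Rtri_{*,*}$. By Lemma~\ref{lem:RGinRab} we may write $\varphi=\varphi(a_i,b_j)$, so $\hpsi\varphi=\varphi(\ha_i,\hb_j)$. By Lemma~\ref{lem:abcdsemi} each of $\alpha_0^{i-1}\ha_i$, $\alpha_0^i\hb_i$ is a semiinvariant; equivalently $\ha_i$ and $\hb_j$ are \emph{rational} semiinvariants (semiinvariants divided by powers of $\alpha_0$). Since the semiinvariant condition (invariance under $\grp{U}_2$ together with the scaling law under the diagonal torus) is preserved under polynomial combinations of rational semiinvariants, $\varphi(\ha_i,\hb_j)$ is at worst a rational semiinvariant with possible denominator a power of $\alpha_0$. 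But by Lemma~\ref{lem:hpsimap} it actually lies in the polynomial ring $\bbC[\alpha_0,\alpha_1,\alpha_2,\beta_0,\beta_1,\beta_2,\beta_3]$ — it has no pole in $\alpha_0$ (this is exactly where the $\Rcd$-side is used, via $\tpsi$, which has only a $\beta_0$ pole). A polynomial that is also a rational semiinvariant is a genuine semiinvariant, so $\hpsi\varphi\in\bbC[V_2\oplus V_3]^{\grp{U}_2(\bbC)}$.

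For the reverse inclusion, given a joint semiinvariant $\Phi(\alpha_i,\beta_j)$ I would use the classical reduction that every semiinvariant, when the variables are translated to kill $\alpha_1$ (the standard ``source'' normalization), becomes a polynomial in the residual translated coefficients $\ha_i,\hb_j$ — more precisely, $\Phi$ expressed in terms of $\alpha_0,\ha_0,\ha_2,\hb_0,\hb_1,\hb_2,\hb_3$ is independent of $\alpha_0$ (because $\alpha_0$ is itself the leading coefficient, which under the $\grp{U}_2$ translation is precisely what the semiinvariant property trades off against). Thus $\Phi=\Phi(\ha_i,\hb_j)=\psi(P)$ for a unique $P\in\Rab$ via $\psi^{-1}$ of Lemma~\ref{lem:psiinjec}. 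By the symmetric argument applied to the $\beta$-translation, $\Phi$ also equals $\tpsi(\tilde P)$ for some $\tilde P\in\Rcd$. Since $\psi,\tpsi$ are injective and agree on the intersection, $P=\tilde P\in\Rab\cap\Rcd=\Rtri_{*,*}$, so $\Phi=\hpsi(P)$ lies in the image. Finally, the grading statement: $\hpsi$ identifies $(d_a,d_b)$ with $(d_\alpha,d_\beta)$ by Lemma~\ref{lem:hpsimap}, the order $\omega$ of the image semiinvariant is read off from the torus-scaling weight, and combining \eqref{eq:alorder} (each $\alpha_i$ of order $2-2i$, each $\beta_j$ of order $3-2j$) with the weight relation \eqref{eq:graderel} $k=4d_a+6d_b+m$ and the degree count $m=2d_a+2d_b$ (from Proposition~\ref{prop:abprop}(1)) yields $\omega=2d_\alpha+3d_\beta-m=(k-3m)/2$ after substitution.

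The main obstacle I expect is the reverse inclusion — specifically, justifying cleanly that \emph{every} joint semiinvariant of $f$ and $g$ becomes $\alpha_0$-independent after the translation that sends $\alpha_1\mapsto 0$. This is a standard fact (the ``leading coefficient of a covariant'' correspondence, dual to the Roberts isomorphism), but stating it precisely for the \emph{joint} case of a quadratic and a cubic, and matching it to the substitution $\psi^{-1}$, requires care; I would either invoke Proposition~\ref{prop:semiex} / the Roberts isomorphism (Theorem~\ref{thm:Roberts}) to generate all semiinvariants from covariants and then translate, or argue directly that the $\grp{U}_2$-invariance of $\Phi$ forces the stated $\alpha_0$-independence after the change of variables.
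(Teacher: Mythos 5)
Your overall route is the same as the paper's: forward direction via Lemma~\ref{lem:abcdsemi} (the $\ha_i,\hb_j$ are ratios of joint semiinvariants by powers of $\alpha_0$, so $\varphi(\ha_i,\hb_j)$ is $\grp{U}_2$-invariant, and polynomiality in the $\alpha$'s and $\beta$'s comes from the $\Rcd$ side through Lemma~\ref{lem:hpsimap}), then the torus scaling \eqref{eq:alorder} to read off the order; reverse direction via $\grp{U}_2$-invariance of $\Phi$ plus $\Rtri_{*,*}=\Rab\cap\Rcd$. One remark on the step you flag as the ``main obstacle'': it is not an obstacle at all, and your formulation of it is garbled. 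The needed fact is simply Definition~\ref{def:semi}(ii) applied with the coefficient-dependent shift $\kappa=-\alpha_1/(2\alpha_0)$ (resp.\ $\kappa=-\beta_1/(3\beta_0)$), which gives $\Phi(\alpha_i,\beta_j)=\Phi(\ha_i,\hb_j)$ (resp.\ $=\Phi(\hc_i,\hd_j)$) identically; what is killed by the translation is $\alpha_1$ (resp.\ $\beta_1$), not $\alpha_0$, which is itself a semiinvariant and survives. From this one gets directly $\Phi(a_i,b_j)=\Phi(c_i,d_j)$, i.e.\ an element of $\Rab\cap\Rcd=\Rtri_{*,*}$ by Theorem~\ref{thm:RcaptR}; the detour through $\psi,\tpsi$ and the assertion that ``$\psi,\tpsi$ agree on the intersection, hence $P=\tilde P$'' is both unnecessary and not justified as stated (the two maps have different domains; the equality $P=\tilde P$ is exactly the identity $\Phi(a_i,b_j)=\Phi(c_i,d_j)$ you are trying to prove, so you should derive it from $\grp{U}_2$-invariance, not from injectivity of $\psi,\tpsi$).

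There is also a concrete error in your grading computation: $m=2d_a+2d_b$ is false. By Proposition~\ref{prop:abprop}(1), $a_i$ has polynomial degree $2i$ and $b_j$ has degree $2j$, so $a_0$ and $a_2$ both contribute $d_a=1$ but $m=0$ and $m=4$ respectively; for a monomial $\prod_i a_i^{e_i}\prod_j b_j^{f_j}$ one has $m=\sum_i 2ie_i+\sum_j 2jf_j$, which is not a function of $(d_a,d_b)$. If you actually substituted $m=2d_a+2d_b$ into $\omega=2d_a+3d_b-m$ you would get $\omega=d_b$, which is wrong. The correct derivation needs no such identity: the scaling $\ha_i'=\lambda^{2-2i}\ha_i$, $\hb_j'=\lambda^{3-2j}\hb_j$ gives the exponent $\sum_i(2-2i)e_i+\sum_j(3-2j)f_j=2d_a+3d_b-m$ directly, and \eqref{eq:graderel} then converts this to $(k-3m)/2$. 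With these two repairs your argument coincides with the paper's proof.
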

\begin{proof}
Suppose that $\varphi(a_i,b_j)$ is a $D_4$ triality invariant
of degrees $(d_a,d_b,m)$ and weight $k$. By Lemma~\ref{lem:hpsimap},
$\hpsi(\varphi)=\varphi(\ha_i,\hb_j)$ is a polynomial that is
homogeneous of degree $d_a$ in $\alpha_i$ and homogeneous of
degree $d_b$ in $\beta_j$. By Lemma~\ref{lem:abcdsemi},
$\varphi(\ha_i,\hb_j)$ is invariant under the transformation
$(u',v')=(u+\kappa v,v)$. (Note that $\alpha_0,\beta_0$
are joint semiinvariants.) To see the scaling behavior
under the transformation $(u',v')=(\lambda u,\lambda^{-1}v)$,
recall that $\ha_i,\hb_j$ scale as
$\ha'_i=\lambda^{2-2i}\ha_i$, $\hb'_i=\lambda^{3-2i}\hb_i$
(which are special cases of \eqref{eq:alorder}). This implies that
$\varphi(\ha_i,\hb_j)$ scales as
\begin{align}
\varphi(\ha'_i,\hb'_j)
=\lambda^{2d_a+3d_b-m}\varphi(\ha_i,\hb_j).
\end{align}
(Recall that the subscript $i$ of $a_i,b_i$ represents half their
polynomial degree in $z_j$.) Therefore, $\varphi(\ha_i,\hb_j)$ is
a semiinvariant of $f$ and $g$ of degrees $(d_a,d_b)$ and
order $\omega=2d_a+3d_b-m$, which is also written as
$\omega=(k-3m)/2$ by Theorem~\ref{thm:trigrad}.

Conversely, suppose that $\Phi(\alpha_i,\beta_j)$ is a joint
semiinvariant of $f$ and $g$. Then $\Phi$ is invariant under the
translation of $u$ that transforms the curve \eqref{eq:abcurve} to
the curve \eqref{eq:cdcurve} and thus $\Phi(a_i,b_j)=\Phi(c_i,d_j)$.
Clearly, this is a polynomial of $a_i,b_j$ and, at the same time, 
a polynomial of $c_i,d_j$. By Theorem~\ref{thm:RcaptR}, this is
a $D_4$ triality invariant. By Lemma~\ref{lem:psiinjec}, this gives
the inverse mapping
$\hpsi^{-1}(\Phi(\alpha_i,\beta_j)) = \Phi(a_i,b_j)=\Phi(c_i,d_j)$.

The bijection obviously preserves the structure of the rings and thus
gives a ring isomorphism.
\end{proof}

Recall that the ring of semiinvariants is isomorphic to
the ring covariants (Roberts isomorphism, Theorem~\ref{thm:Roberts}).
We can therefore rephrase Theorem~\ref{thm:RtriandS} as follows:
\begin{thm}\label{thm:RtriandCov}
The composite of the mappings \eqref{eq:Robinv} and $\hpsi$ gives 
an isomorphism between the ring of $D_4$ triality invariants and
the ring of joint covariants of $f$ and $g$:
\begin{align}
\Rtri_{*,*}
\ \cong\ \bbC[V_2\oplus V_3\oplus\bbC^2]^{\grp{SL_2(\bbC)}}.
\end{align}
\end{thm}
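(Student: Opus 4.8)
The plan is to deduce Theorem~\ref{thm:RtriandCov} as an immediate corollary of Theorem~\ref{thm:RtriandS} together with the Roberts isomorphism (Theorem~\ref{thm:Roberts}). Concretely, Theorem~\ref{thm:RtriandS} provides a ring isomorphism $\hpsi:\Rtri_{*,*}\stackrel\sim\longrightarrow\bbC[V_2\oplus V_3]^{\grp{U}_2(\bbC)}$, and Theorem~\ref{thm:Roberts} provides a ring isomorphism $\bbC[V_2\oplus V_3\oplus\bbC^2]^{\grp{SL}_2(\bbC)}\stackrel\sim\longrightarrow\bbC[V_2\oplus V_3]^{\grp{U}_2(\bbC)}$ whose inverse is given explicitly by the formula \eqref{eq:Robinv}. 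Composing the inverse of the Roberts map with $\hpsi^{-1}$ (or equivalently, composing $\hpsi$ with the Roberts isomorphism itself) yields the desired isomorphism
\begin{align}
\Rtri_{*,*}\ \cong\ \bbC[V_2\oplus V_3\oplus\bbC^2]^{\grp{SL}_2(\bbC)}.
\end{align}
So the first and only essential step is simply to observe that a composite of ring isomorphisms is a ring isomorphism; there is nothing further to prove beyond invoking the two cited results.

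If I wanted to record the isomorphism explicitly at the level of formulas (which is useful for the applications in section~\ref{sec:gen}), I would describe it as follows. Given a joint covariant $\Psi(\alpha_i,\beta_j;u,v)$ of $f$ and $g$ of order $\omega$, first apply the Roberts map to extract its leading coefficient $\Psi_0(\alpha_i,\beta_j)=\Psi(\alpha_i,\beta_j;1,0)$, which is a joint semiinvariant of the same degrees and order; then apply $\hpsi^{-1}$, i.e.\ perform the substitution $\alpha_i\mapsto a_i$ (with $\alpha_1\mapsto 0$) and $\beta_j\mapsto b_j$ of Lemma~\ref{lem:psiinjec}, to obtain a $D_4$ triality invariant $\Psi_0(a_i,b_j)$. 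In the reverse direction, a $D_4$ triality invariant $\varphi$ is sent under $\hpsi$ to the semiinvariant $\varphi(\ha_i,\hb_j)$ and then lifted by the inverse Roberts formula \eqref{eq:Robinv} to the covariant $u^\omega\,\varphi\bigl(\ha_i(u,v),\hb_j(u,v)\bigr)$ with $\omega=(k-3m)/2$, where $\ha_i(u,v),\hb_j(u,v)$ are obtained from $\ha_i,\hb_j$ by the order-homogenization of \eqref{eq:alkidef}. One should check that these substitutions respect the trigradings, but this is already built in: $\hpsi$ identifies $(d_a,d_b)$ with $(d_\alpha,d_\beta)$ by Lemma~\ref{lem:hpsimap}, the Roberts isomorphism preserves degree and order, and the order on the covariant side matches the weight–degree data on the triality side via \eqref{eq:graderel}.

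There is essentially no obstacle here: all the work has been done in Theorem~\ref{thm:RtriandS}, and the Roberts isomorphism is a classical result quoted from \cite{Roberts:1861}. The only point requiring a moment's care is bookkeeping of the grading correspondence, making sure that the order $\omega$ of a covariant, the order of the associated semiinvariant, and the quantity $(k-3m)/2$ attached to the triality invariant all agree — but this is routine given \eqref{eq:graderel} and the statement of Theorem~\ref{thm:RtriandS}. Accordingly the proof is a one-line composition of isomorphisms, and Theorem~\ref{thm:main} (stated as Theorem~\ref{thm:RtriandCov} here) follows.
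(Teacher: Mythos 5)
Your proposal is correct and is exactly how the paper obtains this result: the paper states Theorem~\ref{thm:RtriandCov} as an immediate rephrasing of Theorem~\ref{thm:RtriandS} via the Roberts isomorphism, with no further argument needed. Your additional remarks on the explicit formulas and the grading bookkeeping are consistent with Remark~\ref{rem:graderel} and the usage in section~\ref{sec:gen}.
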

\begin{rem}\label{rem:graderel}
It is helpful summarize the relations among various grades
that we have introduced:
the weight $k$ and polynomial degree $m$ of $D_4$ triality invariants,
the degree $d$ and order $\omega$ of semiinvariants
(covariants),
and the common degrees $(d_a,d_b)=(d_c,d_d)=(d_\alpha,d_\beta)$.
They are related as
\begin{align}
\begin{aligned}
d=d_a+d_b,\qquad
\omega=\frac{k-3m}{2}=2d_a+3d_b-m,
\end{aligned}
\end{align}
or conversely,
\begin{align}
\begin{aligned}
k=4d_a+6d_b+m
 =6d_a+9d_b-\omega,\qquad
m=2d_a+3d_b-\omega.
\end{aligned}
\end{align}
\end{rem}
Since there are no covariants of negative order,
we immediately obtain the following corollary
of Theorem~\ref{thm:RtriandCov}:
\begin{cor}\label{cor:lowerbound}
The weight of nonzero $D_4$ triality invariants
of degree $m$ is not less than $3m$.
\end{cor}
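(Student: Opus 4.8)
The plan is to read the corollary off directly from Theorem~\ref{thm:RtriandCov}. Let $\varphi$ be a nonzero $D_4$ triality invariant of weight $k$ and polynomial degree $m$, so $\varphi\in\Rtri_{k,m}\setminus\{0\}$. First I would pass to the homogeneous case. By the trigrading of Theorem~\ref{thm:trigrad}, every element of $\Rtri_{d_a,d_b,m}$ has weight $4d_a+6d_b+m$, and since $\varphi$ is homogeneous of weight $k$ and degree $m$ it decomposes as a sum of nonzero components lying in various $\Rtri_{d_a,d_b,m}$ with $4d_a+6d_b+m=k$. Replacing $\varphi$ by one such component, I may assume $\varphi\in\Rtri_{d_a,d_b,m}\setminus\{0\}$ with $k=4d_a+6d_b+m$.

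Now I would apply the isomorphism. By Theorem~\ref{thm:RtriandS}, $\hpsi(\varphi)$ is a nonzero joint semiinvariant of $f$ and $g$ of degrees $(d_a,d_b)$ and order $\omega=(k-3m)/2$; equivalently, via the Roberts isomorphism (Theorem~\ref{thm:Roberts}) as in Theorem~\ref{thm:RtriandCov}, $\varphi$ corresponds to a nonzero joint covariant $\Psi$ of $f$ and $g$ of the same order $\omega$. By property~(ii) of Definition~\ref{def:cov}, a covariant of order $\omega$ is homogeneous of degree $\omega$ in the variables $u,v$; hence a nonzero covariant forces $\omega\ge 0$. Therefore $(k-3m)/2=\omega\ge 0$, i.e.\ $k\ge 3m$, as claimed.

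I do not expect any real obstacle here: the argument is pure bookkeeping with the grades established earlier. The only two substantive ingredients are the grade relation $\omega=(k-3m)/2$ between a $D_4$ triality invariant and its image, which is part of the statement of Theorem~\ref{thm:RtriandS} (combined with Theorem~\ref{thm:trigrad}), and the fact that a nonzero covariant has nonnegative order, which is immediate from the definition. If one preferred to stay on the semiinvariant side, one could instead note that the order of a nonzero semiinvariant --- the weight under the diagonal torus $\mathrm{diag}(\lambda,\lambda^{-1})$ --- is nonnegative (again cleanest to see through Roberts), and conclude in the same way from $\omega=2d_a+3d_b-m\ge 0$ together with $k=4d_a+6d_b+m$, using $k=2\omega+3m$.
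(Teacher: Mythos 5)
Your argument is correct and coincides with the paper's reasoning: Corollary~\ref{cor:lowerbound} is deduced there exactly by combining the isomorphism of Theorem~\ref{thm:RtriandCov} (with the grade relation $\omega=(k-3m)/2$ from Theorem~\ref{thm:RtriandS} and Remark~\ref{rem:graderel}) with the fact that nonzero covariants have nonnegative order. Your extra step of decomposing into trigraded components is harmless bookkeeping that the paper leaves implicit.
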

%

\section{Generators}\label{sec:gen}

In this section we study a minimal basis of
the ring $\Rtri_{*,*}$ of $D_4$ triality invariants.

Let $f_k(u,v)\ (k=1,2)$ be binary forms of degree $n_k$. For any
$i\in\bbZ_{\ge 0}$, the $i$th transvectant of $f_1$ and $f_2$ is
defined as
\begin{align}
\tv{f_1}{f_2}{i}
:=\frac{(n_1-i)!(n_2-i)!}{n_1!n_2!}
\sum_{j=0}^i(-1)^j\begin{pmatrix}i\\ j\end{pmatrix}
\frac{\partial^i f_1}{\partial u^{i-j}\partial v^j}
\frac{\partial^i f_2}{\partial u^j\partial v^{i-j}}.
\end{align}
A minimal basis of generators for the covariant ring
$\bbC[V_2\oplus V_3\oplus\bbC^2]^{\grp{SL_2(\bbC)}}$
has been known since the 19th century
\cite[Abs\"{a}tze 296--299]{Gordan:1887}.
The basis consists of 15 generators, which are expressed as
(nested) transvectants of the quadratic and the cubic.

Recall that the isomorphism constructed in the last section
is extremely simple:
for any joint covariant expressed in terms of
\begin{align}
f=\sum_{i=0}^2\alpha_iu^{2-i}v^i,\qquad
g=\sum_{i=0}^3\beta_iu^{3-i}v^i,
\end{align}
the corresponding $D_4$ triality invariant is obtained
by simply setting
\begin{align}
u=1,\quad v=0,\quad
\alpha_i=a_i,\quad
\beta_j=b_j,\quad
a_1=0.
\label{eq:isosubs}
\end{align}
Here $a_i,b_j$ are the meromorphic $D_4$ triality invariants
given in \eqref{eq:abform}. By abuse of notation,
in what follows we write
the basis elements of $\Rtri_{*,*}$ as transvectants
of $f$ and $g$, with the understanding that
the above substitution is performed.

Thus we immediately obtain fifteen $D_4$ triality invariants
as the basis elements of $\Rtri_{*,*}$.
Note, however, that the correspondence of the grades of covariants
and $D_4$ triality invariants is not entirely trivial,
as summarized in Remark~\ref{rem:graderel}.
For the study of the ring $\Rtri_{*,*}$ of $D_4$ triality invariants,
it is useful to arrange the basis elements by polynomial degree $m$
and weight $k$. For this purpose, we label the basis elements as
\begin{align}
\gen{d_a}{d_b}{m}{\omega}.
\end{align}
While the second subscript $\omega=2d_a+3d_b-m$ is redundant,
we put it so that the weight of the generators
is easily calculated as $k=3m+2\omega$.

The basis elements of $\Rtri_{*,*}$ are then expressed as follows:
\begin{align}
\gen{1}{0}{0}{2} &= f,&
\gen{0}{1}{0}{3} &= g,&
\gen{1}{1}{2}{3} &= \tv{f}{g}{1},\nn\\
\gen{2}{0}{4}{0} &= \tv{f}{f}{2},&
\gen{1}{1}{4}{1} &= \tv{f}{g}{2},&
\gen{0}{2}{4}{2} &= \tv{g}{g}{2}=:P,\nn\\
\gen{2}{1}{6}{1} &= \tv{f^2}{g}{3},&
\gen{1}{2}{6}{2} &= \tv{f}{P}{1},&
\gen{0}{3}{6}{3} &= \tv{g}{P}{1}=:Q,\nn\\
\gen{1}{2}{8}{0} &= \tv{f}{P}{2},&
\gen{1}{3}{10}{1} &= \tv{f}{Q}{2},&
\gen{3}{2}{12}{0} &= \tv{f^3}{g^2}{6},\nn\\
\gen{0}{4}{12}{0} &= \tv{P}{P}{2},&
\gen{2}{3}{12}{1} &= \tv{f^2}{Q}{3},&
\gen{3}{4}{18}{0} &= \tv{f^3}{gQ}{6}.
\end{align}
These generators are also listed in Table~\ref{table:numgen},
arranged by index $m$ and order $\omega$.
Their explicit forms are easily obtained
by using \eqref{eq:isosubs} and \eqref{eq:abform}.
We obtain, for instance,
\begin{align}
\gen{1}{0}{0}{2}
 &=f
 &&=a_0
 &&=\frac{E_4}{12},\nn\\
\gen{0}{1}{0}{3}
 &=g
 &&=b_0
 &&=\frac{E_6}{216},\nn\\
\gen{1}{1}{2}{3}
 &=\tv{f}{g}{1}
 &&=\frac{a_0b_1}{3}
 &&=\frac{\Delta K}{36},\nn\\
\gen{2}{0}{4}{0}
 &=\tv{f}{f}{2}
 &&=2a_0a_2
 &&=\frac{6\Delta K^2-E_4E_6L+E_4^2M}{144},\nn\\
\gen{1}{1}{4}{1}
 &=\tv{f}{g}{2}
 &&=\frac{a_0b_2+3a_2b_0}{3}
 &&=\frac{-(E_6^2+576\Delta)L+E_4E_6M}{3456},\nn\\
\gen{0}{2}{4}{2}
 &=\tv{g}{g}{2}
 &&=\frac{2(3b_0b_2-b_1^2)}{9}
 &&=\frac{-12E_4\Delta K^2-E_4^2E_6L+E_6^2M}{93312}.
\end{align}
\begin{table}[t]
\begin{align*}
\begin{array}{|c|cccc|c|}
\hline
m\backslash\omega&0&1&2&3&\#\\ \hline
 0& -& -& f& g& 2\\
 2& -& -& -& \tv{f}{g}{1}& 1\\
 4& \tv{f}{f}{2}& \tv{f}{g}{2}&\ P=\tv{g}{g}{2}\ & -& 3\\
 6& -& \tv{f^2}{g}{3}& \tv{f}{P}{1}&\ Q=\tv{g}{P}{1}\ & 3\\
 8& \tv{f}{P}{2}& -& -& -& 1\\
10& -& \tv{f}{Q}{2}& -& -& 1\\
12&\ \tv{f^3}{g^2}{6},\ \tv{P}{P}{2}\ &\ \tv{f^2}{Q}{3}\ & -& -& 3\\
14& -& -& -& -& 0\\
16& -& -& -& -& 0\\
18& \tv{f^3}{gQ}{6}& -& -& -& 1\\ \hline
\mbox{Tot.}&5&4&3&3&15\\
\hline
\end{array}
\end{align*}
\caption{Generators of $\Rtri_{*,*}$,
arranged by degree $m$ and order $\omega$.
The weight of generators is given by $k=3m+2\omega$.}
\label{table:numgen}
\end{table}

A rather trivial, but noteworthy observation is that
the 15 generators are all polynomials of $K,L,M,N$ over $M_*$.
This leads to the following proposition:
\begin{prop}\label{prop:RtriKLMN}
The ring of $D_4$ triality invariants
is contained in the polynomial ring $M_*[K,L,M,N]$.
In other words,
\begin{align}
\Rtri_{*,*}\subset \bbC[E_4,E_6,K,L,M,N].
\end{align}
\end{prop}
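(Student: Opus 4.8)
The plan is to reduce the statement to the explicit observation already recorded in the text, namely that each of the fifteen generators of $\Rtri_{*,*}$ lies in $M_*[K,L,M,N]=\bbC[E_4,E_6,K,L,M,N]$. The starting point is Theorem~\ref{thm:RtriandCov}, which gives an isomorphism between $\Rtri_{*,*}$ and the covariant ring $\bbC[V_2\oplus V_3\oplus\bbC^2]^{\grp{SL}_2(\bbC)}$, together with the classical fact (Gordan) that this covariant ring is generated as a $\bbC$-algebra by the fifteen explicit (nested) transvectants listed above. Since a ring homomorphism is determined by its values on generators and its image is generated by the images of those generators, it suffices to verify that $\hpsi^{-1}$ sends each of the fifteen covariant generators into $M_*[K,L,M,N]$.

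First I would recall from the isomorphism that applying $\hpsi^{-1}$ to a covariant amounts to the substitution \eqref{eq:isosubs}: set $u=1$, $v=0$, $\alpha_i=a_i$, $\beta_j=b_j$ (with $a_1=0$). Thus the image of each generator is an explicit polynomial in $a_0,a_2,b_0,b_1,b_2,b_3$. Next I would invoke the explicit formulas \eqref{eq:abform}, which express every $a_i$ and $b_j$ as an element of $M_*[\Delta^{-1}][K,L,M,N]$ — indeed $a_0=E_4/12$ and $b_0=E_6/216$ lie in $M_*$, while $a_2,b_1,b_2,b_3$ are linear in $K^2,K,L,M,N$ with coefficients in $M_*[\Delta^{-1}]$. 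A priori this only places each generator in $M_*[\Delta^{-1}][K,L,M,N]$, so negative powers of $\Delta=(E_4^3-E_6^2)/1728$ could still appear; the content of the proposition is that, for these particular combinations, they do not.

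The cleanest way to eliminate the $\Delta^{-1}$'s is to compute, generator by generator, the polynomial in $K,L,M,N$ over $M_*$ that results — exactly as displayed in the sample computations for $\gen{1}{0}{0}{2},\dots,\gen{0}{2}{4}{2}$, and analogously for the remaining nine generators. Each such computation is a finite, routine algebraic simplification using \eqref{eq:abform} and $E_4^3-E_6^2=1728\Delta$; the denominators cancel in every case. Having checked all fifteen, one concludes that the generators lie in $\bbC[E_4,E_6,K,L,M,N]$, and since $\Rtri_{*,*}$ is the $\bbC$-algebra they generate (via the isomorphism), the inclusion $\Rtri_{*,*}\subset\bbC[E_4,E_6,K,L,M,N]$ follows.

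The main obstacle — really the only nontrivial point — is the denominator cancellation: each of $a_2,b_1,b_2,b_3$ individually carries powers of $E_4$ in the denominator, so one must trust (and verify) that the transvectant combinations conspire to clear them. A conceptual shortcut that avoids the fifteen case checks is available: by Theorem~\ref{thm:RcaptR} every $D_4$ triality invariant is simultaneously a polynomial in $a_i,b_j$ (hence holomorphic away from zeros of $E_4$) and a polynomial in $c_i,d_j$ (hence holomorphic away from zeros of $E_6$); combined with Lemma~\ref{lem:RtriDinv}, which already gives $\Rtri_{*,*}\subset M_*[\Delta^{-1}][K,L,M,N]$, one sees that any $\varphi\in\Rtri_{*,*}$ written as $P(K,L,M,N)/\Delta^N$ with $P\in M_*[K,L,M,N]$ must in fact be holomorphic on all of $\bbH$ (a zero of $\Delta$ is a zero of neither $E_4$ nor $E_6$ simultaneously — indeed $\Delta$ only vanishes at the cusp), so $\Delta^N$ divides $P$ in $M_*[K,L,M,N]$ by Proposition~\ref{prop:KLMNindep}, forcing $\varphi\in M_*[K,L,M,N]$. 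Either route works; I would present the explicit generator computation since it is constructive and the fifteen polynomials are of independent interest.
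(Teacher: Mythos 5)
Your main route is exactly the paper's: the paper proves this proposition simply by observing that each of the fifteen generators, computed via the substitution \eqref{eq:isosubs} and \eqref{eq:abform}, is a polynomial in $K,L,M,N$ over $M_*$, and that is precisely the computation you propose to carry out. That part is correct and complete.

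One caution about your ``conceptual shortcut'': as phrased it does not work. Since $\Delta$ vanishes nowhere on $\bbH$, any $P(K,L,M,N)/\Delta^N$ is automatically holomorphic on $\bbH$, so holomorphy gives you nothing, and the obstruction to membership in $M_*[K,L,M,N]$ lives entirely at the cusp; you cannot conclude that $\Delta^N$ divides $P$ from holomorphy plus Proposition~\ref{prop:KLMNindep}. A repairable version would use Theorem~\ref{thm:RcaptR} together with \eqref{eq:abform} and \eqref{eq:cdform} to place the coefficient functions (in the unique expansion of $\varphi$ over monomials in $K,L,M,N$) in $M_*[E_4^{-1}]\cap M_*[E_6^{-1}]=M_*$, both of which have regular $q$-expansions --- but that requires an extra linear-independence argument at the cusp of the kind used in Lemma~\ref{lem:module}, so the explicit fifteen-generator check you chose to present is the cleaner path.
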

This proposition is stronger than Lemma~\ref{lem:RtriDinv} and
provides us with a simple
necessary condition for a function to be a $D_4$ triality invariant.
In addition to the algorithm~\ref{algo:D4},
this also serves as a convenient tool for efficiently calculating
physical quantities of the $N_\mathrm{f}=4$ theory.

\vspace{2ex}

\begin{center}
  {\bf Acknowledgments}
\end{center}

This work was supported in part by JSPS KAKENHI Grant Number 25K07326.

\vspace{2ex}


\appendix
\renewcommand{\theequation}{\Alph{section}.\arabic{equation}}

\section{Special functions}\label{app:funct}

The Jacobi theta functions are defined as
\begin{align}
\begin{aligned}
\varth_1(z,\tau)&:=
 i\sum_{n\in\bbZ}(-1)^n y^{n-1/2}q^{(n-1/2)^2/2},\\
\varth_2(z,\tau)&:=
  \sum_{n\in\bbZ}y^{n-1/2}q^{(n-1/2)^2/2},\\
\varth_3(z,\tau)&:=
  \sum_{n\in\bbZ}y^n q^{n^2/2},\\
\varth_4(z,\tau)&:=
  \sum_{n\in\bbZ}(-1)^n y^n q^{n^2/2},
\end{aligned}
\end{align}
where
\begin{align}
y=e^{2\pi i z},\qquad q=e^{2\pi i\tau}
\end{align}
and $z\in\bbC, \tau\in\bbH$.
The Dedekind eta function is defined as
\begin{align}
\eta(\tau):=q^{1/24}\prod_{n=1}^\infty(1-q^n).
\end{align}
The Eisenstein series are given by
\begin{align}
E_{2n}(\tau)
 =1-\frac{4n}{B_{2n}}\sum_{k=1}^{\infty}\frac{k^{2n-1}q^k}{1-q^k}
\label{eq:eisendef}
\end{align}
for $n\in\bbZ_{>0}$. The Bernoulli numbers $B_k$ are defined by
\begin{align}
\frac{x}{e^x-1}=\sum_{k=0}^\infty\frac{B_k}{k!}x^k.
\end{align}
We often abbreviate $\varth_k(0,\tau),\eta(\tau),\,E_{2n}(\tau)$
as $\varth_k,\eta,\,E_{2n}$ respectively.


\renewcommand{\section}{\subsection}
\renewcommand{\refname}{\bf References}

\end{document}